\documentclass{article}
\usepackage{authblk}
\usepackage{fancyhdr}
\usepackage{amscd}
\usepackage[dvipdfmx]{graphicx}
\usepackage{amsmath}
\usepackage{amssymb}
\usepackage{amsthm}
\usepackage{mathrsfs}
\newtheorem{dfn}{Definition}[section]
\newtheorem{thm}[dfn]{Theorem}
\newtheorem{prop}[dfn]{Proposition}
\newtheorem{lem}[dfn]{Lemma}
\newtheorem{cor}[dfn]{Corollary}
\newtheorem{rem}[dfn]{Remark}

\newtheorem{ass}[dfn]{Assumption}

\newtheorem{observation}[dfn]{Observation}

\numberwithin{equation}{section}

\usepackage{color}
\begin{document}

\title{On Algebraic-Dynamical Correspondence of Oscillatory Blow-Up Solutions for ODEs}

%\author{Hisatoshi Kodani\thanks{Advanced Institute for Materials Research, Tohoku University, 2-1-1 Katahira, Aoba-ku, Sendai, 980-8577, Japan ({\tt hisatoshi.kodani@tohoku.ac.jp})}, 
%Kaname Matsue\thanks{(Corresponding author)} $^{,}$ \footnote{Institute of Mathematics for Industry, Kyushu University, Fukuoka 819-0395, Japan ({\tt kmatsue@imi.kyushu-u.ac.jp})} $^{,}$ \footnote{International Institute for Carbon-Neutral Energy Research (WPI-I$^2$CNER), Kyushu University, Fukuoka 819-0395, Japan} ,\\ 
%Hiroyuki Ochiai\thanks{Institute of Mathematics for Industry, Kyushu University, Fukuoka 819-0395, Japan ({\tt ochiai@imi.kyushu-u.ac.jp})}
%and Akitoshi Takayasu\thanks{Faculty of Engineering, Information and Systems, University of Tsukuba, 1-1-1 Tennodai, Tsukuba, Ibaraki 305-8573, Japan (\texttt{takitoshi@risk.tsukuba.ac.jp})}
%}
%\author[1]{Hisatoshi Kodani}
\author[1,2]{Kaname Matsue\thanks{(Corresponding author, {\tt kmatsue@imi.kyushu-u.ac.jp})}}
%\author[1]{\\ Hiroyuki Ochiai}
% and
%\author[3]{Akitoshi Takayasu}

\affil[1]{
\normalsize{
Institute of Mathematics for Industry, Kyushu University, Fukuoka 819-0395, Japan
}
}
\affil[2]{
\normalsize{
International Institute for Carbon-Neutral Energy Research (WPI-I$^2$CNER), Kyushu University, Fukuoka 819-0395, Japan
}
}
%\affil[3]{Faculty of Engineering, Information and Systems, University of Tsukuba, 1-1-1 Tennodai, Tsukuba, Ibaraki 305-8573, Japan}

%\date{}
\maketitle

\begin{abstract}
We develop an algebraic-dynamical correspondence for type-I oscillatory blow-up solutions of autonomous ODEs with asymptotically quasi-homogeneous vector fields. 
Periodic solutions of the balance law arising from asymptotic expansions of blow-ups are related to periodic orbits on the horizon for the desingularized vector field. 
Unlike the stationary case, the correspondence involves a positive periodic scaling function and a nontrivial time reparametrization. 
We further establish the correspondence of linearized structures: distinguished phase and scaling directions generate invariant subbundles, while the remaining characteristic multipliers are identified through associated quotient bundles. 
These results yield a criterion for the existence of type-I periodic blow-up solutions based on periodic solutions of the balance law and their characteristic multipliers, without explicitly constructing compactifications or periodic orbits at infinity.
The theory also clarifies the difference between stability information in the balance law and in dynamics at infinity. 
Two examples illustrate the correspondence.
\end{abstract}

{\bf Keywords:} oscillatory blow-up solutions; dynamics at infinity; balance laws; invariant subbundles; characteristic multipliers
\par
\bigskip
{\bf AMS subject classifications : } 34A26, 34C25, 34D05, 34D08, 37C27

\tableofcontents

\section{Introduction}
\label{section-intro}

Our main interest in the present study is the nature of {\em blow-up solutions} of ODEs depending on the real-time variable $t$:
\begin{equation}
\label{ODE-original}
{\bf y}' \equiv \frac{d{\bf y}}{dt} = f({\bf y}),\quad {\bf y}(0) = {\bf y}_0\in U,
\end{equation}
where $U\subset \mathbb{R}^n$ is an open set, $f: U\to \mathbb{R}^n$ is $C^r$ with $r\geq 1$.
\par
A solution ${\bf y}(t)$ is said to {\em blow up} at $t_{\max} < \infty$ if its modulus (or norm) diverges as $t\to t_{\max}-0$.
The finite value $t_{\max}$ is known as the {\em blow-up time}.
Properties of blow-up solutions such as the existence and asymptotic behavior are of great importance in the field of (ordinary, partial, delay, etc.) differential equations, and have been widely investigated over the decades in a variety of problems, such as {\em ignition in combustion problem, chemotaxis, superconductivity, self-focusing of optical laser, wild oscillations in nonlinear beam problems, finite-time collapse of crystalline curvature flows}. 
See, for example, \cite{asym2} for the references.
Asymptotic behavior of blow-up solutions is often referred to as determination of {\em blow-up rates}, which is characterized as the following form:
\begin{equation*}
{\bf y}(t) \sim {\bf u}(\theta(t))\quad \text{ as }\quad t\to t_{\max},
\end{equation*}
for a function ${\bf u}$, where 
\begin{equation*}
\theta(t) = t_{\max}-t
\end{equation*}
and $t_{\max}$ is assumed to be a finite value and to be known a priori.
%%%%
\par
The author and his collaborators have recently developed a framework to characterize blow-up solutions from the viewpoint of {\em dynamics at infinity} (e.g. \cite{Mat2018, Mat2019}), as well as machineries of computer-assisted proofs for the existence of blow-up solutions extracting both their qualitative and quantitative features (e.g. \cite{LMT2023, MT2020_1, MT2020_2, TMSTMO2017}).
As in the present paper, finite-dimensional vector fields with scale invariance in an asymptotic sense, {\em asymptotic quasi-homogeneity} defined precisely in Definition \ref{dfn-AQH}, are mainly concerned.
In this framework, dynamics at infinity is appropriately characterized, and blow-up solutions are shown to be characterized by dynamical properties of invariant sets, such as equilibria, \lq\lq at infinity".
More precisely, the original vector field is transformed by appropriate change of coordinates with respect to the asymptotic quasi-homogeneity, and resulting vector field is called the {\em desingularized vector field}, and \lq\lq dynamics at infinity" is appropriately described by dynamics on the {\em horizon}, the image of infinity under appropriate transformations referred to as {\em compactifications} in many studies (e.g., \cite{Mat2018}) or {\em embeddings} (e.g., \cite{Mat2025_NHIM}), for the desingularized vector fields.
%%%%
In particular, {\em hyperbolicity} of such invariant sets induces blow-up rates of the form $a_0(t_{\max} - t)^{-\rho}$ uniquely determined by the asymptotic quasi-homogeneity of the original vector field.
By means of the terminology in the field of (partial) differential equations, such blow-ups are said to be {\em type-I}.
\par
On the other hand, {\em multi-order asymptotic expansions as well as their correspondence to several objects in dynamics at infinity} have recently been provided by the author and his collaborators \cite{asym1, asym2}.
While this was originally motivated to derive a systematic methodology to calculate multi-order asymptotic expansions of blow-up solutions, quantities characterizing asymptotic expansions of blow-ups have turned out to be in one-to-one correspondence with (linear) dynamical structure of \lq\lq equilibria at infinity".
Similar correspondence is also provided in nonautonomous systems \cite{M2026_asym_nonaut}.
\par
\bigskip
The above investigations are mainly concerned with blow-up solutions exhibiting essentially monotone divergence. 
On the other hand, there are also non-monotone blow-ups, in particular oscillatory blow-ups (e.g., \cite{BFGK2011, GP2011, GP2013}), as well as unbounded solutions accompanied by oscillations (e.g., \cite{DH1999}). 
Such solutions involve an additional difficulty that is absent in stationary blow-ups. 
Their existence and oscillatory nature are often characterized in terms of upper and lower envelopes, or equivalently by {\rm limsup}/{\rm liminf}-type behavior (e.g., \cite{GP2011}). 
While such characterizations are effective for detecting divergence with oscillations, they provide rather limited information on the concrete asymptotic profile and, in particular, on the dynamical stability underlying the blow-up behavior.
One possible approach to extracting more detailed information is to introduce an asymptotic ansatz involving an oscillatory profile and to transform the original problem into an equation for a bounded function. 
Periodic or bounded solutions of the transformed equation can then describe oscillatory blow-up profiles; such an approach has been successfully applied, for example, to nonlinear fourth-order equations \cite{CLM2018, DLP2015}. 
Nevertheless, this procedure presupposes an appropriate scaling, a transformed time variable, and the existence of a bounded profile of the prescribed form. 
These ingredients are not always evident a priori, and the applicability and dynamical justification of the ansatz therefore have to be examined separately.
From the viewpoint of dynamics at infinity, on the other hand, oscillatory blow-ups can instead be described through periodic or more general invariant sets on the horizon \cite{Mat2018, Mat2025_NHIM}. 
In principle, this viewpoint is sufficient for describing both the asymptotic behavior and the associated stability. 
In practice, however, a direct investigation of dynamics at infinity may become considerably involved. 
The directions of divergence are generally anisotropic, and the resulting compactified and desingularized vector fields may have complicated expressions. 
Consequently, even when the underlying dynamical mechanism is simple, direct calculations can become lengthy and tedious, and may obscure rather than reveal the essential structure. 
This difficulty becomes particularly serious when the existence of a relevant periodic orbit at infinity is not known in advance: there are only limited systematic tools for efficiently detecting such an object before carrying out the full compactification and dynamical analysis.
\par
The aim of the present paper is to overcome these difficulties by extending the algebraic-dynamical correspondence developed for stationary blow-ups \cite{asym2, M2026_asym_nonaut} to the periodic setting. 
We establish a correspondence between periodic solutions of the balance law arising from blow-up asymptotics and periodic orbits on the horizon for the desingularized vector field. 
Unlike the stationary case, however, both the balance law and the associated variational equations are {\em genuinely time-dependent}. 
In particular, the scaling connecting the two descriptions is no longer a constant but a positive periodic function, accompanied by a nontrivial reparametrization of time. 
Moreover, the relevant linearized objects are no longer eigenspaces and eigenvalues at equilibria, but invariant vector bundles and characteristic multipliers along periodic orbits. 
These time-dependent structures constitute the principal difficulties in extending the stationary correspondence.
The resulting correspondence reveals features that are largely hidden in the stationary setting. 
First, the scaling function connecting the algebraic blow-up profile with the periodic dynamics at infinity is itself characterized by a scalar {\em nonautonomous} linear differential equation. 
Second, the linear invariant structures of the two descriptions are related through appropriate scaling and projection, and their remaining characteristic multipliers are naturally identified on quotient bundles after the distinguished phase and scaling directions are removed. 
Therefore, replacing equilibria by periodic orbits is not merely a formal extension of the stationary theory: it exposes the roles of the scaling function, time reparametrization, and invariant bundle structure as intrinsic ingredients of the algebraic-dynamical correspondence.
As a consequence, the existence of type-I oscillatory blow-up solutions can be investigated directly through periodic solutions of the balance law and their characteristic multipliers, without explicitly constructing a compactification or directly searching for periodic orbits at infinity. 
In this sense, the balance law provides an algebraically simpler representation of the same asymptotic mechanism, while the correspondence with dynamics at infinity supplies its dynamical justification and stability information. 
We expect that this viewpoint not only provides an efficient criterion for periodic blow-ups, but also helps clarify the structural essence of blow-up solutions and of dynamics at infinity beyond the stationary setting.
In particular, the periodic setting makes explicit structures that are reduced to constant quantities and fixed linear spaces in the stationary case.
%%%%
\par
\bigskip
The rest of the present paper is organized as follows. 
In Section \ref{section-preliminary}, we briefly review the description of blow-up solutions for autonomous ODEs through dynamics at infinity. 
In Section \ref{section-correspondence-1}, we establish the correspondence between periodic solutions of the balance law and periodic orbits on the horizon. 
Section \ref{section-correspondence-2} develops the correspondence of invariant subbundles and characteristic multipliers, including the associated quotient-bundle structure. 
In Section \ref{section-example}, we derive a simple criterion for the existence of periodic blow-ups and illustrate the theory with two examples.

\section{Blow-up description for autonomous systems through dynamics at infinity: Short review}
\label{section-preliminary}

Here we briefly review a characterization of blow-up solutions for autonomous, finite-dimensional ODEs from the viewpoint of dynamical systems.
Details of the present methodology are already provided in \cite{Mat2018, Mat2025_NHIM}.
%\par
%\bigskip
%
%	new subsection
%
\subsection{Asymptotically quasi-homogeneous vector fields}
\label{sec:QH}

First we describe a basic class of ODEs considered throughout this paper.
\begin{dfn}[Homogeneity index and admissible domain. cf. \cite{Mat2018, Mat2025_NHIM}]\rm
\label{dfn-index}
Let $\alpha = (\alpha_1,\cdots, \alpha_n)$ be a set of nonnegative integers.
We call the index set $I_\alpha=\{i\in \{1,\cdots, n\}\mid \alpha_i > 0\}$ the set of {\em homogeneity indices associated with $\alpha = (\alpha_1,\cdots, \alpha_n)$}.
Let $U\subset \mathbb{R}^n$.
We say that the domain $U\subset \mathbb{R}^n$ is {\em admissible with respect to the sequence $\alpha$}
if
\begin{equation*}
U = \left\{{\bf u}=(u_1,\cdots, u_n)\in \mathbb{R}^n \mid u_i\in \mathbb{R}\text{ if }i\in I_\alpha,\ (u_{j_1},\cdots, u_{j_{n-l}}) \in \tilde U\right\},
\end{equation*} 
where $\{j_1, \cdots, j_{n-l}\} = \{1,\cdots, n\}\setminus I_\alpha$ and $\tilde U$ is an open set in $\mathbb{R}^{n-l}$ spanning $(u_{j_1},\cdots, u_{j_{n-l}})$ with $\{j_1, \cdots, j_{n-l}\} = \{1,\cdots, n\}\setminus I_\alpha$.
\end{dfn}

\begin{dfn}[Asymptotically quasi-homogeneous vector fields, cf. \cite{asym1, D1993, asym2, Mat2018, Mat2025_NHIM}]\rm
\label{dfn-AQH}
Let $U\subset \mathbb{R}^n$ be an admissible set with respect to $\alpha$. 
Also, let $f_0:U \to \mathbb{R}$ be a function.
Let $\alpha_1,\ldots, \alpha_n$ be nonnegative integers with $(\alpha_1,\ldots, \alpha_n) \not = (0,\ldots, 0)$ and $k > 0$.
We say that $f_0$ is a {\em quasi-homogeneous function\footnote{
%	footnote
In preceding studies, all $\alpha_i$'s and $k$ are typically assumed to be natural numbers.
In the present study, on the other hand, the above generalization is valid.
%	footnote: end
} of type $\alpha = (\alpha_1,\ldots, \alpha_n)$ and order $k$}, denoted by $f_0\in QH_{n,1}(\alpha, k)$, if
\begin{equation*}
f_0( s^{\Lambda_\alpha}{\bf x} ) = s^k f_0( {\bf x} )\quad \text{ for all } {\bf x} = (x_1,\ldots, x_n)^T \in U \text{ and } s>0,
\end{equation*}
where\footnote{
Throughout the rest of this paper, the power of real positive numbers or functions to matrices is described in a similar manner.
%	footnote
}
\begin{equation*}
\Lambda_\alpha =  {\rm diag}\left(\alpha_1,\ldots, \alpha_n\right),\quad s^{\Lambda_\alpha}{\bf x} = (s^{\alpha_1}x_1,\ldots, s^{\alpha_n}x_n)^T.
\end{equation*}
Next, let $X = \sum_{i=1}^n f_i({\bf x})\frac{\partial }{\partial x_i}$ be a continuous vector field defined on $U$.
We say that $X$, or simply $f = (f_1,\ldots, f_n)^T$ is a {\em quasi-homogeneous vector field of type $\alpha = (\alpha_1,\ldots, \alpha_n)$ and order $k+1$}, denoted by $f\in QH_n(\alpha, k)$, if $f_i\in QH_{n,1}(\alpha, k+\alpha_i)$ for all $i=1,\ldots, n$.
%each component $f_i$ is a quasi-homogeneous function of type $\alpha$ and order $k + \alpha_i$.
\par
Finally, we say that $X = \sum_{i=1}^n f_i({\bf x})\frac{\partial }{\partial x_i}$, or simply $f: U\to \mathbb{R}^n$ is an {\em asymptotically quasi-homogeneous vector field of type $\alpha = (\alpha_1,\ldots, \alpha_n)$ and order $k+1$ (at infinity)}, denoted by $f\in AQH_n(\alpha, k)$, if there is
% a quasi-homogeneous vector field  $f_{\alpha,k} = (f_{i; \alpha,k})_{i=1}^n$ of type $\alpha$ and order $k+1$ 
$f_{\alpha, k}\in QH_n(\alpha, k)$
such that
\begin{equation}
\label{residual}
f_i( s^{\Lambda_\alpha}{\bf x} ) - s^{k+\alpha_i} f_{i;\alpha,k}( {\bf x} ) = o(s^{k+\alpha_i}),\quad i\in \{1,\ldots, n\}
 \end{equation}
as $s\to +\infty$ uniformly on $\left\{{\bf x}\in U \mid \sum_{i\in I_\alpha} x_i^2 = 1, (x_{j_1},\cdots, x_{j_{n-l}}) \in \tilde K\right\}$ for any compact subset $\tilde K\subset \tilde U$.
\end{dfn}

%\begin{rem}
%In the above definition, non-polynomial-like functions such as $\sin x$ are not included to characterize quasi-homogeneity.
%Indeed, such functions are allowed to exist only in the residual terms characterized by the asymptotic quasi-homogeneity (\ref{residual}). 
%On the other hand, (\ref{residual}) is required for all $i\in \{1,\ldots, n\}$.
%\end{rem}

%Throughout the rest of this section, consider an (autonomous) $C^r$ vector field (\ref{ODE-original}) with $r\geq 1$, where $f: U \to \mathbb{R}^n$ is asymptotically quasi-homogeneous of type $\alpha = (\alpha_1,\ldots, \alpha_n)$ and order $k+1$ at infinity defined on an admissible set $U\subset \mathbb{R}^n$ with respect to $\alpha$.

A fundamental property of quasi-homogeneous functions and vector fields is reviewed in e.g. \cite{asym1, asym2}, while some fundamental property of quasi-homogeneous functions and vector fields are reviewed here.
%\begin{lem}[\cite{asym2}]
%\label{temporary-label2}
%A quasi-homogenous function $f_0$ of type $(\alpha_1,\ldots,\alpha_n)$ and order $k$ satisfies the following differential equation:
%\begin{equation}\label{temporary-label1}
%\sum_{l=1}^n \alpha_l y_l \frac{\partial f_0}{\partial y_l}({\bf y}) = k f_0({\bf y}),
%\end{equation}
%equivalently
%\begin{equation*}
%(\nabla_{\bf y} f_0({\bf y}))^T \Lambda_{\alpha} {\bf y} = k f_0({\bf y}).
%\end{equation*}
%\end{lem}
See \cite{asym2} for detailed arguments about asymptotic behavior of their derivatives.

\begin{lem}[\cite{asym2}]
\label{lem-identity-QHvf}
%A quasi-homogeneous vector field $f=(f_1,\ldots,f_n)$ of type $\alpha = (\alpha_1,\ldots,\alpha_n)$ and order $k+1$ 
A function $f\in QH_n(\alpha, k)$
satisfies\footnote{
%	footnote
As mentioned in \cite{asym2}, these characterizations still hold when the type $\alpha$ contains a component with $\alpha_i = 0$.
In particular, the framework involving quasi-homogeneity discussed here can be applied to dynamics at infinity for a parameter-dependent family of autonomous ODEs.
%	footnote: end
} 
\begin{equation}\label{temporay-label4}
(D_{\bf y} f)({\bf y}) \Lambda_\alpha \mathbf{y} = \left( k I+ \Lambda_\alpha \right) f({\bf y}).
\end{equation}
\end{lem}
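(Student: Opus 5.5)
The identity is the vector-field version of Euler's relation for quasi-homogeneous functions. I would derive it by differentiating the defining scaling relation with respect to the scaling parameter $s$ and evaluating at $s=1$.

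Fix ${\bf y}\in U$. For each $i$, the component $f_i$ lies in $QH_{n,1}(\alpha, k+\alpha_i)$, so
\begin{equation*}
f_i(s^{\Lambda_\alpha}{\bf y}) = s^{k+\alpha_i} f_i({\bf y}) \quad \text{for all } s>0 .
\end{equation*}
Since $U$ is admissible, $s^{\Lambda_\alpha}{\bf y}\in U$ for all $s>0$. Only the coordinates with $\alpha_j>0$ are scaled, and these range over all of $\mathbb{R}$, while the remaining coordinates stay fixed in $\tilde U$. Hence both sides are defined on $(0,\infty)$.

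Assuming $f$ is $C^1$, which is the standing regularity $r\geq 1$, I differentiate in $s$. The curve $s\mapsto s^{\Lambda_\alpha}{\bf y}$ has derivative $(\alpha_1 s^{\alpha_1-1}y_1,\ldots,\alpha_n s^{\alpha_n-1}y_n)^T$, which equals $s^{-1}\Lambda_\alpha s^{\Lambda_\alpha}{\bf y}$. By the chain rule,
\begin{equation*}
(D_{\bf y} f_i)(s^{\Lambda_\alpha}{\bf y})\, s^{-1}\Lambda_\alpha s^{\Lambda_\alpha}{\bf y} = (k+\alpha_i)s^{k+\alpha_i-1} f_i({\bf y}).
\end{equation*}
Setting $s=1$ gives $(D_{\bf y} f_i)({\bf y})\Lambda_\alpha {\bf y} = (k+\alpha_i) f_i({\bf y})$.

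Stacking these $n$ scalar identities gives the stated matrix form, since $(kI+\Lambda_\alpha)$ is diagonal with entries $k+\alpha_i$.

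The components with $\alpha_j=0$ cause no difficulty, and this is the only point needing care. Those entries of $\Lambda_\alpha{\bf y}$ vanish, and the corresponding coordinates are not moved by the scaling. The derivative is therefore needed only in the scaled directions, where the domain is all of $\mathbb{R}$.
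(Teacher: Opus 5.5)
Your argument is correct and follows the standard route behind the cited result. The paper gives no proof of its own and defers to \cite{asym2}, where the identity comes from the scalar Euler relation $(\nabla_{\bf y} f_0)^T\Lambda_\alpha{\bf y} = k f_0$, obtained by differentiating the scaling relation in $s$ at $s=1$, applied to each component $f_i$ with order $k+\alpha_i$ and then stacked, exactly as you do.

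One minor remark: differentiability of $f$ at ${\bf y}$ already suffices for the chain rule at $s=1$. So you do not need the standing $C^1$ hypothesis, which in any case concerns the original field rather than $f_{\alpha,k}$, the main object the lemma is applied to.
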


Now consider the autonomous $C^r$ vector field
\begin{equation}
\label{ODE-main}
{\bf y}' = f({\bf y})
\end{equation}
with $r\geq 1$, 
where $f: U(\subset \mathbb{R}^n)\to \mathbb{R}^n$ is $f\in AQH_n(\alpha, k)$ with $\alpha = (\alpha_1,\ldots, \alpha_n)$, 
where $U\subset \mathbb{R}^n$ is an admissible set with respect to $\alpha$.
In particular, the component-wise expression of $f$ is described as follows:
\begin{equation}
\label{f-param}
f({\bf y}) = (f_1({\bf y}), \ldots, f_n({\bf y})).
\end{equation}
Throughout the rest of our paper, we assume that the vector field $f$ has the form (\ref{f-param}).
%%%%

\subsection{Quasi-parabolic embeddings}
\label{sec:global}

Here we review an example of embeddings which embed the original (locally compact) phase space into a compact manifold to characterize \lq\lq infinity" as a bounded object.
While there are several choices of embeddings, the following embedding to characterize {\em dynamics at infinity} is applied here.

\begin{dfn}[Quasi-parabolic embedding, cf. \cite{MT2020_1}]\rm
\label{dfn-quasi-para}
Let $\{\beta_i\}_{i\in I_\alpha}$ be the collection of natural numbers so that 
\begin{equation}
\label{LCM}
\alpha_i \beta_i \equiv c \in \mathbb{N},\quad i\in I_\alpha
\end{equation}
is the least common multiple.
In particular, $\{\beta_i\}_{i\in I_\alpha}$ is chosen to be the smallest among possible collections.
If $i\not \in I_\alpha$, we formally set $\beta_i = 0$.
\par
Let $p_\alpha({\bf y})$ be a functional given by
\begin{equation}
\label{func-p}
p_\alpha({\bf y}) \equiv \left( \sum_{i \in I_\alpha} y_i^{2\beta_i} \right)^{1/2c}.
\end{equation}
Define the mapping $T_{{\rm para};\alpha}: \mathbb{R}^n \to \mathbb{R}^n$ as the inverse of
\begin{equation}
\label{parabolic-cpt}
S_{{\rm para};\alpha}({\bf x}) = {\bf y},\quad y_j = \kappa_\alpha^{\alpha_j} x_j,\quad j=1,\ldots, n,
\end{equation}
where 
\begin{equation*}
\kappa_\alpha = \kappa_\alpha({\bf x}) = (1- p_\alpha({\bf x})^{2c})^{-1} \equiv \left( 1 - \sum_{j\in I_\alpha} x_j^{2\beta_j}\right)^{-1}.
\end{equation*}
We call the mapping $T_{{\rm para};\alpha}$ the {\em quasi-parabolic embedding (with type $\alpha$)}.
\end{dfn}

\begin{rem}
\label{rem-kappa}
The functional $\kappa_\alpha = \tilde \kappa_\alpha({\bf y})$ as a functional determined by ${\bf y}$ is implicitly determined by $p_\alpha({\bf y})$.
Details of such a characterization of $\kappa_\alpha$ in terms of ${\bf y}$, and the bijectivity and smoothness of $T_{{\rm para};\alpha}$ are shown in \cite{MT2020_1} with a general class of embeddings, where the embedding is referred to as {\em compactifications}.
\end{rem}

As proved in \cite{MT2020_1}, $T_{{\rm para};\alpha}$ maps $U$ one-to-one onto the set
$\mathcal{D} \equiv \{{\bf x}\in U \mid p_\alpha({\bf x}) < 1\}$.
Infinity in the original coordinate then corresponds to a point on the level set of $p_\alpha$:
\begin{equation*}
\mathcal{E} = \{{\bf x} \in U \mid p_\alpha({\bf x}) = 1\}.
\end{equation*}
\begin{dfn}\rm
We call the set $\mathcal{E}$ the {\em horizon}.
\end{dfn}

\subsection{Dynamics at infinity and blow-up description}
\label{label-dynamics-infinity}
Once we fix an embedding associated with the type $\alpha = (\alpha_1, \ldots, \alpha_n)$ of the vector field $f\in AQH_n(\alpha, k)$ of the form (\ref{f-param}), %we can derive the vector field which makes sense including the horizon.
the {\em dynamics at infinity} makes sense through the appropriately transformed vector field called the {\em desingularized vector field}, denoted by $g$.
The common approach is twofold.
Firstly, we rewrite the vector field (\ref{ODE-original}) with respect to the new variable used in embeddings.
Secondly, we introduce the time-scale transformation of the form $d\tau = q({\bf x})\kappa_\alpha({\bf x}(t))^k dt$ for some function $q({\bf x})$ which is bounded including the horizon. 
We then obtain the vector field with respect to the new time variable $\tau$, which is {\em as smooth as $f$ including the horizon}, as mentioned in preceding papers (e.g., \cite{Mat2018, MT2020_1}).

%\begin{rem}
%\label{rem-choice-cpt}
%Continuity of the desingularized vector field $g$ including the horizon is guaranteed by the smoothness of $f$ and asymptotic quasi-homogeneity (\cite{MT2020_1}).
%In the case of parabolic-type embeddings, $g$ inherits the smoothness of $f$ including the horizon, which is not always the case of other embeddings in general. 
%Details are discussed in \cite{Mat2018}.
%\end{rem}

\begin{dfn}[Time-scale desingularization]\rm 
Define the new time variable $\tau$ by
\begin{equation}
\label{time-desing-nonaut}
d\tau = (1-p_\alpha({\bf x})^{2c})^{-k}\left\{1-\frac{2c-1}{2c}(1-p_\alpha({\bf x})^{2c}) \right\}^{-1}dt,
\end{equation}
equivalently
\begin{equation*}
t - t_0 = \int_{\tau_0}^\tau \left\{1-\frac{2c-1}{2c}(1-p_\alpha({\bf x}(\tau))^{2c}) \right\}(1-p_\alpha({\bf x}(\tau))^{2c})^k d\tau,
\end{equation*}
where $t_0$ and $\tau_0$ denote the correspondence of initial times, ${\bf x}(\tau) = T({\bf y}(t(\tau)))$ and ${\bf y}(t(\tau))$ is a solution ${\bf y}(t)$ under the parameter $\tau$.
We shall call (\ref{time-desing-nonaut}) {\em the time-scale desingularization of order $k+1$}.
\end{dfn}
%%%
%%%

We then obtain the corresponding desingularized vector field $g$;
\begin{align}
\label{desing-para}
\frac{d{\bf x}}{d\tau} 
%%%%
	&= \left(1-\frac{2c-1}{2c}(1-p_\alpha({\bf x})^{2c}) \right)\tilde f({\bf x}) - G({\bf x})\Lambda_\alpha {\bf x}
\end{align}
with 
\begin{align}
\notag
\tilde f({\bf x}) &:= \kappa_\alpha^{- (k I +\Lambda_\alpha)} f(\kappa_\alpha^{\Lambda_\alpha} {\bf x}),\\
\label{Gx}
G({\bf x}) &\equiv \sum_{j\in I_\alpha} \frac{x_j^{2\beta_j-1}}{\alpha_j}\tilde f_j({\bf x}),\quad 
\Lambda_\alpha = {\rm diag}(\alpha_1, \ldots, \alpha_n).
\end{align}
%where we have used an alias $x_0 = t$.
%The above identifications are also consistent with the evolution of $t$ followed by the time-scale desingularization (\ref{time-desing-nonaut}).

Smoothness of $f$ and the asymptotic quasi-homogeneity guarantee the smoothness of the right-hand side $g$ of (\ref{desing-para}) including the horizon $\mathcal{E}\equiv \{p_\alpha({\bf x}) = 1\}$. %, {\em provided\footnote{
%%	footnote
%This requirement is essential in the nonautonomous case, while it is not the case of the autonomous case.
%%	footnote: end
%}
% $k\geq 1$}.
In particular, {\em dynamics at infinity}, such as divergence of solution trajectories to specific directions, is characterized through dynamics generated by (\ref{desing-para}) around the horizon. 
See e.g., \cite{Mat2025_NHIM, MT2020_1} for details.

\begin{rem}[Invariant structure]
\label{rem-invariance}
The horizon $\mathcal{E}$ is a codimension one invariant submanifold of $\overline{\mathcal{D}}$. 
Indeed, direct calculations yield that
\begin{equation*}
\left. \frac{d}{d \tau}p_\alpha({\bf x}(\tau))^{2c}\right|_{\tau=0} = 0\quad \text{ whenever }\quad {\bf x}(0)\in \mathcal{E}.
\end{equation*}
\end{rem}

\par
\bigskip
Through the embedding we have introduced, 
dynamics around the horizon characterize dynamics at infinity, including blow-up behavior.
For a set $S_\ast$, let
% for a desingularized vector field $g$, let 
\begin{equation*}
W_{\rm loc}^s(S_\ast) = W_{\rm loc}^s(S_\ast; g) := \{ {\bf x} \in U \mid {\rm dist} (\varphi_{g}(\tau, {\bf x}), S) \to 0\, \text{ as }\,\tau \to +\infty\}
\end{equation*}
be the {\em (local) stable set} of ${\bf x}_\ast$ for the dynamical system generated by $g$, where $U$ is a neighborhood of $S_\ast$ in $\mathbb{R}^n$ or an appropriate phase space, $\varphi_{g}$ is the flow generated by $g$, and ${\rm dist}$ denotes the distance associated with the norm $|\cdot|$:
\begin{equation*}
{\rm dist} ({\bf x}, S) = {\rm inf}\{|{\bf x} - {\bf x}_s| \mid {\bf x}_s\in S\}.
\end{equation*}

Let $P(g)$ be a set of periodic orbits for $g$, and $P_{\rm hyp}(g)$ be a set of hyperbolic periodic orbits for $g$. 
Finally, let
\begin{equation*}
P_{\mathcal{E}}(g) := \{\gamma \in P(g) \mid \gamma \subset \mathcal{E}\},\quad P_{{\rm hyp}, \mathcal{E}}(g) := P_{\rm hyp}(g) \cap P_{\mathcal{E}}(g).
\end{equation*}

\begin{thm}[Type-I periodic blow-up, \cite{Mat2018}]
\label{thm:blowup}
Let ${\bf y}(t)$ be a solution of (\ref{ODE-original}) with $f\in AQH_n(\alpha, k)$. 
Suppose that $\gamma_\ast \in P_{{\rm hyp}, \mathcal{E}}(g)$.
If there is a solution ${\bf y}(t)$ of (\ref{ODE-original}) with a bounded initial point ${\bf y}(0)$ whose image ${\bf x} = T_{\rm para}({\bf y})$ is on the local stable manifold $W^s_{\rm loc}(\gamma_\ast; g)$, then $t_{\max} < \infty$ holds; namely, ${\bf y}(t)$ is a blow-up solution.
Moreover,
\begin{equation*}
p_\alpha ({\bf y}(t)) \sim c' \theta(t)^{-1/k}\quad \text{ as }\quad t\to t_{\max}
\end{equation*}
where $c' > 0$ is a constant.
Finally, if the $j$-th component ${\bf x}_\ast$ is not identically zero, then we also have
\begin{equation*}
y_j(t) \sim c_j(-c' \ln \theta(t))\theta(t)^{-\alpha_j /k}\quad \text{ as }\quad t\to t_{\max},
\end{equation*}
where $c_j$ is a periodic function with a constant $c'$.
\end{thm}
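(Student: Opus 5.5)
The plan is to run the argument in the desingularized time $\tau$ and then pull everything back to $t$ through the time-scale desingularization (\ref{time-desing-nonaut}). Let $\gamma_\ast$ have period $T_\ast>0$. Since $\gamma_\ast$ is hyperbolic in $\overline{\mathcal{D}}$ and lies in the invariant hypersurface $\mathcal{E}$ (Remark \ref{rem-invariance}), one of its nontrivial characteristic multipliers belongs to the direction transversal to $\mathcal{E}$. A trajectory ${\bf x}(\tau)=T_{\rm para}({\bf y}(t(\tau)))$ starting in $\mathcal{D}$ and lying on $W^s_{\rm loc}(\gamma_\ast;g)$ must approach $\gamma_\ast$ from inside $\mathcal{D}$. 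Hence this transversal multiplier is of modulus less than one, and the theory of stable manifolds of hyperbolic periodic orbits (asymptotic phase) gives a phase $\tau_0$ and a constant $\lambda>0$ such that
\begin{equation*}
|{\bf x}(\tau)-\gamma_\ast(\tau+\tau_0)| = O(e^{-\lambda\tau}),\qquad \tau\to+\infty .
\end{equation*}
Note that the transversal exponent is exactly what later governs the decay rate of $1-p_\alpha({\bf x})^{2c}$.

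Step one is to derive a scalar equation for $\nu(\tau):=1-p_\alpha({\bf x}(\tau))^{2c}=\kappa_\alpha^{-1}$. I will differentiate $p_\alpha^{2c}$ along (\ref{desing-para}). Using $\sum_{j\in I_\alpha}2\beta_j x_j^{2\beta_j-1}\alpha_j x_j = 2c\,p_\alpha^{2c}$ and the definition (\ref{Gx}) of $G$, I expect a linear identity of the form
\begin{equation*}
\frac{d\nu}{d\tau} = -2c\,\nu\, H({\bf x}),
\end{equation*}
where $H$ is continuous including $\mathcal{E}$ and, on $\mathcal{E}$, reduces to a multiple of $G({\bf x})$ (since the first term of (\ref{desing-para}) collapses to $\tilde f$ there). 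Along $\gamma_\ast$ the average $\bar H:=T_\ast^{-1}\int_0^{T_\ast}H(\gamma_\ast(s))\,ds$ is positive; this is exactly the transversal Floquet exponent, which is where hyperbolicity enters. Combined with the exponential convergence, this gives
\begin{equation*}
\nu(\tau)= C\exp\!\Big(-2c\!\int_0^{\tau}\! H(\gamma_\ast(s+\tau_0))ds\Big)(1+o(1)) = e^{-2c\bar H\tau}P(\tau)(1+o(1)),
\end{equation*}
with $P$ positive and $T_\ast$-periodic.

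Step two is the time computation. By (\ref{time-desing-nonaut}), $dt/d\tau = (1-\tfrac{2c-1}{2c}\nu)\nu^k$, and this is integrable on $[\tau_0,\infty)$ because $\nu$ decays exponentially. Therefore $t_{\max}=t_0+\int_{\tau_0}^\infty (dt/d\tau)\,d\tau<\infty$, which proves blow-up, and
\begin{equation*}
\theta(t)=\int_\tau^\infty \nu^k(1+O(\nu))\,d\sigma .
\end{equation*}
Substituting the periodic-times-exponential form of $\nu$, I expect $\theta = e^{-2ck\bar H\tau}\tilde P(\tau)(1+o(1))$ with $\tilde P$ positive and periodic. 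Consequently $\nu^k \sim \theta\,\hat P(\tau)$ for a positive periodic $\hat P$, and $\tau = -(2ck\bar H)^{-1}\ln\theta + O(1)$. Hence $p_\alpha({\bf y})=\kappa_\alpha$ is comparable to $\theta^{-1/k}$, with proportionality given by a periodic function of $\ln\theta$. Here I will read the statement's constant $c'$ up to such a bounded periodic factor, which appears unavoidable in the periodic setting.

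Step three handles the components. Since $y_j=\kappa_\alpha^{\alpha_j}x_j = \nu^{-\alpha_j}x_j$ and $x_j(\tau)\to\gamma_{\ast,j}(\tau+\tau_0)$, we get $y_j(t)\sim c_j(-c'\ln\theta)\,\theta^{-\alpha_j/k}$. Here $c_j$ is $\gamma_{\ast,j}$ multiplied by the periodic factor $\hat P^{-\alpha_j/k}$ and composed with the affine relation between $\tau$ and $\ln\theta$, so $c_j$ is periodic. The assumption that the $j$-th component of $\gamma_\ast$ is not identically zero is needed so that $c_j\not\equiv0$.

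The main obstacle is step two: showing that the $O(1)$ remainder in the relation between $\tau$ and $\ln\theta$ is itself asymptotically periodic rather than merely bounded, so that a single periodic profile $c_j$ results. I would attack it by writing $\int_\tau^\infty e^{-a\sigma}Q(\sigma)\,d\sigma$ for periodic $Q$ exactly as $e^{-a\tau}$ times a periodic function, obtained by summing over periods. The $o(1)$ errors are then absorbed by the exponential convergence from the asymptotic phase.
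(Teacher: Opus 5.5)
The paper quotes this theorem from \cite{Mat2018} without proof. Your route is the standard one: asymptotic phase, a scalar equation for $\nu=\kappa_\alpha^{-1}$, integrability of $dt/d\tau=\nu^k(1-\frac{2c-1}{2c}\nu)$, then pull-back to ${\bf y}$. Steps one and two are sound, except for the phrase ``periodic function of $\ln\theta$'', discussed below.

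Two simplifications are available. First, Lemma~\ref{lem-G} gives $d\nu/d\tau=-G({\bf x})\nu$ exactly along the trajectory, so $H=G/(2c)$ with no guesswork. Second, positivity of $\int_0^{T_\ast}G(\gamma_\ast(s))\,ds$ does not need your stable-bundle remark. That remark, as written, would still require showing $W^s_{\rm loc}(\gamma_\ast;g)\subset\mathcal{E}$ when the transversal multiplier exceeds one. Instead, $\nu(\tau)=\nu(\tau_0)\exp(-\int_{\tau_0}^{\tau}G({\bf x}(s))\,ds)$ must tend to $0$ while ${\bf x}(\tau)$ converges exponentially to $\gamma_\ast$, and this already forces the average of $G$ along $\gamma_\ast$ to be positive.

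Your caveat about $c'$ is forced, not a weakening. From $\theta/\nu^k=\int_\tau^\infty\exp(-k\int_\tau^\sigma G({\bf x}(s))\,ds)\,(1+O(\nu(\sigma)))\,d\sigma$ one gets $\theta/\nu^k-X(\tau+\tau_0)\to0$, where $X$ is the positive periodic solution of (\ref{X-linear}) from Lemma~\ref{lem-r}. Hence $p_\alpha({\bf y}(t))\,\theta(t)^{1/k}-r(\tau+\tau_0)\to0$ with $r=X^{1/k}$. This $r$ is nonconstant unless $G$ is constant on $\gamma_\ast$. So the first display of the statement can only hold in the sense of two-sided bounds.

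The step that does not yet go through is Step three. By your own Step two, $-\ln\theta=a\tau-\ln\tilde P(\tau)+o(1)$ with $a=2ck\bar H$ and $\tilde P$ periodic but in general nonconstant. So $\tau$ and $\ln\theta$ are \emph{not} affinely related up to $o(1)$, and composing $F(\tau):=\hat P(\tau)^{-\alpha_j/k}\gamma_{\ast,j}(\tau+\tau_0)$ with this relation does not automatically produce a periodic function of $\ln\theta$. The summation over periods in your last paragraph only shows that the remainder is periodic as a function of $\tau$.

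What is missing is an inversion. Set $\Phi(\tau):=\tau-a^{-1}\ln\tilde P(\tau)$. Then $\Phi(\tau+T_\ast)=\Phi(\tau)+T_\ast$. Differentiating $e^{-a\tau}\tilde P(\tau)=\int_\tau^\infty e^{-a\sigma}Q(\sigma)\,d\sigma$, where $\nu^k=e^{-a\tau}Q(\tau)(1+o(1))$, gives $\Phi'=Q/(a\tilde P)>0$. Hence $\Phi^{-1}$ is Lipschitz and $\Phi^{-1}(s)-s$ is $T_\ast$-periodic, so $F(\tau)=(F\circ\Phi^{-1})(-a^{-1}\ln\theta)+o(1)$ with $F\circ\Phi^{-1}$ periodic. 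This yields $c_j=F\circ\Phi^{-1}$ and $c'=a^{-1}$.

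A more transparent route, in line with Section~\ref{section-correspondence-1}, uses (\ref{X-linear}) and (\ref{time-equiv-2}). These give $\frac{d}{d\tau}\bigl(k\int_0^\tau G({\bf x}_\ast(s))\,ds-\ln X(\tau)\bigr)=X(\tau)^{-1}=\frac{d\eta}{d\tau}$. Therefore $-\ln\theta=\eta(\tau+\tau_0)+{\rm const}+o(1)$, which is exactly affine in $\eta$, and $\theta(t)^{\alpha_j/k}y_j(t)-Y_{\ast,j}(-\ln\theta(t)+\eta_0)\to0$. In other words, $c_j$ is the $j$-th component of the balance-law solution ${\bf Y}_\ast$ of Theorem~\ref{thm-correspondence-C0} with $c'=1$, consistent with Theorem~\ref{thm-existence-blow-up}.
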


%%%

\par
\bigskip

We end this section by providing several properties to describe features of dynamics on the horizon. 
First we consider a function $G$ in (\ref{Gx}), which is essential to characterize equilibria on the horizon mentioned in Section \ref{section-review-asym}.
The gradient of the horizon $\mathcal{E} = \{p_\alpha({\bf x}) = 1\}$ at ${\bf x}\in \mathcal{E}$ is given by
\begin{equation*}
\nabla_{\bf x} p_\alpha({\bf x}) = \frac{p_\alpha({\bf x})^{1-2c}}{c} \left(\beta_1x_1^{2\beta_1-1}, \ldots, \beta_nx_n^{2\beta_n-1}\right)^T = \frac{1}{c} \left(\beta_1x_1^{2\beta_1-1}, \ldots, \beta_nx_n^{2\beta_n-1} \right)^T,
\end{equation*}
where $\beta_l$ is set as $0$ whenever $\alpha_l = 0$.
In particular, 
\begin{equation}
\label{grad-xast}
\nabla_{\bf x} p_\alpha({\bf x}_\ast) = \frac{1}{c}\left(\beta_1x_{\ast,1}^{2\beta_1-1}, \ldots, \beta_n x_{\ast,n}^{2\beta_n-1} \right)^T
\end{equation}
holds at a point ${\bf x}_\ast \in \mathcal{E}$.
%%%
Similarly, we observe that
\begin{equation}
\label{grad-2c-xast}
\nabla_{\bf x} (p_\alpha( {\bf x})^{2c})^T = 2\left(\beta_1x_1^{2\beta_1-1}, \ldots, \beta_nx_n^{2\beta_n-1} \right)
\end{equation}
for any ${\bf x} \in \mathcal{D}$.
In particular, the $j$-th component with $j\not \in I_\alpha$ is $0$.
Using the gradient, the function $G({\bf x})$ in (\ref{Gx}) is also written by
\begin{equation*}
G({\bf x}) = \sum_{j\in I_\alpha} \frac{\beta_j}{c}x_j^{2\beta_j-1}\tilde f_j({\bf x}) = \frac{1}{2c}\nabla_{\bf x} (p_\alpha({\bf x})^{2c})^T \tilde f({\bf x}). 
\end{equation*}
From (\ref{grad-xast}) and (\ref{grad-2c-xast}), we have $\nabla_{\bf x} (p_\alpha( {\bf x}_\ast)^{2c})^T = 2c \nabla_{\bf x} p_\alpha( {\bf x}_\ast)^T$ for any ${\bf x}_\ast \in \mathcal{E}$, in particular
\begin{equation*}
G({\bf x}_\ast) = \frac{1}{2c}\nabla_{\bf x} (p_\alpha( {\bf x}_\ast)^{2c})^T \tilde f({\bf x}_\ast) = \nabla_{\bf x}p_\alpha( {\bf x}_\ast)^T \tilde f({\bf x}_\ast). 
\end{equation*}

\begin{lem}[cf. \cite{asym2}, Section 2]
\label{lem-G}
\begin{equation*}
\kappa_\alpha^{-1} \frac{d\kappa_\alpha}{d\tau} = G({\bf x}(\tau)),
\end{equation*}
where $G({\bf x})$ is given in (\ref{Gx}).
\end{lem}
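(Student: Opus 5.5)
We need to prove $\kappa_\alpha^{-1}\, d\kappa_\alpha/d\tau = G({\bf x}(\tau))$ along solutions of (\ref{desing-para}). The plan is to differentiate the definition $\kappa_\alpha = (1-p_\alpha({\bf x})^{2c})^{-1}$ directly along the desingularized flow. We work at points of $\mathcal{D}$, where $\kappa_\alpha$ is finite. By the chain rule,
\begin{equation*}
\kappa_\alpha^{-1}\frac{d\kappa_\alpha}{d\tau} = \kappa_\alpha \frac{d}{d\tau}\left(p_\alpha({\bf x})^{2c}\right) = \kappa_\alpha \nabla_{\bf x}(p_\alpha({\bf x})^{2c})^T \frac{d{\bf x}}{d\tau}.
\end{equation*}
Next, substitute (\ref{desing-para}) for $d{\bf x}/d\tau$ and use (\ref{grad-2c-xast}), which gives $\nabla_{\bf x}(p_\alpha^{2c})^T = 2(\beta_j x_j^{2\beta_j-1})_j$.

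Two contractions are needed. The first is $\nabla_{\bf x}(p_\alpha^{2c})^T\tilde f = 2cG$, which is the identity stated just before the lemma. The second is
\begin{equation*}
\nabla_{\bf x}(p_\alpha^{2c})^T\Lambda_\alpha{\bf x} = 2\sum_{j\in I_\alpha}\alpha_j\beta_j x_j^{2\beta_j} = 2c\,p_\alpha^{2c},
\end{equation*}
which follows from (\ref{LCM}). Writing $P = p_\alpha({\bf x})^{2c}$, these give
\begin{equation*}
\frac{dP}{d\tau} = 2cG\left(1-\tfrac{2c-1}{2c}(1-P)\right) - 2cGP = G\left(2c(1-P) - (2c-1)(1-P)\right) = G(1-P).
\end{equation*}
Multiplying by $\kappa_\alpha = (1-P)^{-1}$ yields exactly $G$. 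As a consistency check, this also recovers Remark \ref{rem-invariance}, since $dP/d\tau = 0$ on $P = 1$.

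The main obstacle is purely bookkeeping. One has to check that the factor $\frac{2c-1}{2c}$ in the time rescaling (\ref{time-desing-nonaut}) combines with the $-G\Lambda_\alpha{\bf x}$ term so that the result collapses to $(1-P)G$ with coefficient exactly one. This depends on correctly using $\alpha_j\beta_j = c$, and on the fact that indices $j\notin I_\alpha$ contribute nothing because their gradient components vanish.

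As an alternative route, one could start from $d\tau/dt$ and the original equation via $\tilde f$ and quasi-homogeneity (Lemma \ref{lem-identity-QHvf}). However, the direct computation above avoids this.
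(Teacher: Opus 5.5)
Your computation is correct. With $P=p_\alpha({\bf x})^{2c}$, the two contractions $\nabla_{\bf x}P^T\tilde f=2cG$ and $\nabla_{\bf x}P^T\Lambda_\alpha{\bf x}=2cP$ turn (\ref{desing-para}) into $dP/d\tau=G\,(1-P)$, and multiplying by $\kappa_\alpha=(1-P)^{-1}$ gives the lemma.

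The paper gives no proof of its own and defers to \cite{asym2}. The identity is most naturally read as a by-product of deriving (\ref{desing-para}) from (\ref{ODE-original}), which runs in the opposite direction to your argument. One differentiates the implicit relation $p_\alpha({\bf y})^{2c}=\kappa_\alpha^{2c}-\kappa_\alpha^{2c-1}$ along the original flow. Using $f(\kappa_\alpha^{\Lambda_\alpha}{\bf x})=\kappa_\alpha^{kI+\Lambda_\alpha}\tilde f({\bf x})$ and $\alpha_j\beta_j=c$, this gives $\kappa_\alpha^{-1}d\kappa_\alpha/dt=\kappa_\alpha^{k}G({\bf x})\{1-\frac{2c-1}{2c}\kappa_\alpha^{-1}\}^{-1}$, and one then multiplies by $dt/d\tau$ from (\ref{time-desing-nonaut}).

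That route does not presuppose the form of $g$. It also explains the factor $\{1-\frac{2c-1}{2c}(1-p_\alpha^{2c})\}^{-1}$ in the time rescaling: it is exactly what cancels this denominator. It uses only the definition of $\tilde f$, not quasi-homogeneity or Lemma \ref{lem-identity-QHvf}, contrary to your closing remark.

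Your route takes (\ref{desing-para}) as given. This is legitimate here, since the paper states it before the lemma, but relative to the standard derivation it is a consistency check rather than an independent proof.

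In return, your route is shorter and yields a pointwise identity of vector fields, $\nabla_{\bf x}(p_\alpha^{2c})^T g=G\,(1-p_\alpha^{2c})$ on all of $\overline{\mathcal{D}}$. This remains meaningful on $\mathcal{E}$, where $\kappa_\alpha$ is infinite. Besides recovering Remark \ref{rem-invariance}, differentiating this identity at a periodic orbit $\gamma_\ast\subset\mathcal{E}$ shows that $u(\tau)={\bf n}_\ast(\tau)^T{\bf V}(\tau)$ satisfies $\dot u=-G({\bf x}_\ast(\tau))\,u$ for every solution of (\ref{var-desing}). So the multiplier $\exp(-\int_0^{T_\tau}G({\bf x}_\ast(\tau))d\tau)$ of Corollary \ref{cor-ch-desing} appears directly as the multiplier of the normal quotient bundle, without invoking Assumption \ref{ass-diff-QH}.
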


\section{Correspondence of periodic solutions}
\label{section-correspondence-1}

This and the next sections address several structural correspondences between dynamics around periodic orbits on the horizon for desingularized vector fields and another nature of blow-ups; {\em asymptotic expansions} derived in \cite{asym1, asym2}.
%%%
As a result, we see that a criterion for their existence is simplified through the latter nature.
Unless otherwise mentioned, let $g$ be the desingularized vector field (\ref{desing-para}) associated with $f$.

\subsection{Short Summary: Balance Law, Dynamics at Infinity and Associated Eigenstructures}
\label{section-review-asym}

Our arguments are based on the correspondence of stationary blow-ups; blow-up solutions associated with equilibria on the horizon \cite{asym2}.
Based on this, we quickly introduce the methodology of multi-order asymptotic expansions of type-I blow-up solutions proposed in \cite{asym1}.
%The method begins with the following ansatz, which can be easily verified through the desingularized vector field and our blow-up description; Theorem \ref{thm:blowup}.
First decompose the vector field $f$ into two terms as follows:
\begin{equation*}
f({\bf y}) = f_{\alpha, k}({\bf y}) + f_{\rm res}({\bf y}),
\end{equation*}
where $f_{\alpha, k}\in QH_n(\alpha, k)$ and $f_{\rm res}$ is the residual (i.e., lower-order) terms.
The componentwise expressions are 
\begin{equation*}
f_{\alpha, k}({\bf y}) = (f_{1;\alpha, k}({\bf y}), \ldots, f_{n;\alpha, k}({\bf y}))^T,\quad f_{\rm res}({\bf y}) = (f_{1;{\rm res}}( {\bf y}), \ldots, f_{n;{\rm res}}({\bf y}))^T,
\end{equation*}
respectively.
Similar expressions are also used for other vector fields such as $f$.
%%%

\begin{dfn}[Balance law]\rm
\label{dfn-balance}
We call the identity
\begin{equation}
\label{eq-balance}
\frac{d{\bf Y}_0}{d\eta} = -\frac{1}{k}\Lambda_\alpha {\bf Y}_0 + f_{\alpha, k}({\bf Y}_0)
\end{equation}
with an appropriate time $\eta$ {\em a balance law} for the blow-up solution ${\bf y}(t)$ for (\ref{ODE-original}).
\end{dfn}
The next step is to derive the collection of systems for $\tilde {\bf Y}(\eta) \equiv {\bf Y}(\eta) - {\bf Y}_0(\eta)$ by means of {\em inhomogeneous linear systems}, but we only mention a key concept towards our aim here.
%%%

\begin{dfn}[Blow-up variation matrix]\rm
\label{dfn-blow-up-variation matrix}
Suppose that a periodic solution ${\bf Y}_0(\eta)$ of the balance law (\ref{eq-balance}) is given.
We call the $T_\eta$-periodic matrix-valued function
\begin{equation}
\label{blow-up-variation-matrix}
A(\eta) = - \frac{1}{k}\Lambda_\alpha + D_{\bf Y} f_{\alpha, k}( {\bf Y}_0(\eta))
\end{equation}
the {\em blow-up variation matrix} for the blow-up solution ${\bf y}(t)$.
\end{dfn}
Later we shall investigate {\em characteristic multipliers} of the linear differential system 
\begin{equation*}
\frac{d{\bf v}}{d\eta} = A(\eta){\bf v}
\end{equation*}
and associated (invariant) vector bundles with the correspondence to objects for (\ref{desing-para}).

\subsection{Correspondence of periodic solutions}
Unlike the correspondence of stationary objects \cite{asym2}, we have to fix an appropriate time-scale $\eta$ in (\ref{eq-balance}).
First we shall summarize the main result in this section; the correspondence among periodic solutions of two systems.
\begin{thm}[Correspondence of periodic solutions]
\label{thm-correspondence-C0}
Let ${\bf Y} = {\bf Y}_0(\eta)$ be a periodic solution to (\ref{eq-balance}) satisfying $p_\alpha({\bf Y}_0(\eta)) > 0$ for all $\eta$ with the time-scaling 
\begin{equation}
\label{time-equiv-1}
\frac{d\tau}{d\eta} = p_\alpha({\bf Y}_0(\eta))^k.
\end{equation}
Then 
\begin{equation*}
{\bf x}_0(\tau(\eta)) := p_\alpha({\bf Y}_0(\eta))^{-\Lambda_\alpha} {\bf Y}_0(\eta)
\end{equation*}
solves (\ref{desing-para}) with $\gamma_{0, \ast} \equiv \{{\bf x}_0(\tau(\eta)) \}_{\eta \in [0, T_\eta]} \in P_{\mathcal{E}}(g)$.
\par
%%%%%
Conversely, let $\gamma_\ast \in P_{\mathcal{E}}(g)$ with the period $T_\tau > 0$ and $\gamma = \{{\bf x}_\ast(\tau)\}_{\tau \in [0,T_\tau]}$, and suppose that
\begin{equation*}
\int_0^{T_\tau} G({\bf x}_\ast (\tau)) d\tau > 0.
\end{equation*}
Then the function $r(\tau) := X(\tau)^{1/k}$, where $X(\tau)$ is the unique $T_\tau$-periodic solution to
\begin{equation}
\label{X-linear}
\frac{dX}{d\tau} =  k G({\bf x}_\ast(\tau))X - 1,
\end{equation}
is well-defined and positive for all $\tau \in [0,T_\tau]$. 
Moreover, 
\begin{equation*}
{\bf Y}_\ast(\eta(\tau)) := r(\tau)^{\Lambda_\alpha} {\bf x}_\ast(\tau)
\end{equation*}
is a periodic solution to the balance law (\ref{eq-balance}) with the period $T_\eta > 0$, where $\eta (\tau)$ is the time-scale transformation determined by
\begin{equation}
\label{time-equiv-2}
\frac{d\eta}{d\tau} = r(\tau)^{-k}.
\end{equation}
\end{thm}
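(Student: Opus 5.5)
The plan is to reduce both directions to one algebraic computation relating $g$ on the horizon to the balance law (\ref{eq-balance}). \textbf{Step 0: the vector field on $\mathcal{E}$.} For ${\bf x}\in\mathcal{E}$ we have $p_\alpha({\bf x})=1$, so the prefactor in (\ref{desing-para}) equals $1$. Moreover $\kappa_\alpha=\infty$ there, and by asymptotic quasi-homogeneity (\ref{residual}) we get $\tilde f({\bf x})=f_{\alpha,k}({\bf x})$ on $\mathcal{E}$. Hence on the horizon
\begin{equation*}
g({\bf x}) = f_{\alpha,k}({\bf x}) - G({\bf x})\Lambda_\alpha{\bf x},\qquad G({\bf x})=\frac{1}{2c}\nabla_{\bf x}(p_\alpha^{2c})^T f_{\alpha,k}({\bf x}).
\end{equation*}
I will also use that $p_\alpha(s^{\Lambda_\alpha}{\bf x})=s\,p_\alpha({\bf x})$, which follows from $\alpha_i\beta_i=c$ in (\ref{LCM}). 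Differentiating $p_\alpha(s^{\Lambda_\alpha}{\bf x})^{2c}=s^{2c}p_\alpha({\bf x})^{2c}$ at $s=1$ gives the Euler identity $\nabla(p_\alpha^{2c})^T\Lambda_\alpha{\bf x}=2c\,p_\alpha({\bf x})^{2c}$.

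\textbf{Forward direction.} Set $r(\eta)=p_\alpha({\bf Y}_0(\eta))>0$ and ${\bf x}=r^{-\Lambda_\alpha}{\bf Y}_0$. By homogeneity $p_\alpha({\bf x})\equiv1$, so ${\bf x}$ lies on $\mathcal{E}$. Substitute ${\bf Y}_0=r^{\Lambda_\alpha}{\bf x}$ into (\ref{eq-balance}) and use $f_{\alpha,k}(r^{\Lambda_\alpha}{\bf x})=r^{kI+\Lambda_\alpha}f_{\alpha,k}({\bf x})$. Then convert to $\tau$ via (\ref{time-equiv-1}). This yields
\begin{equation*}
\frac{d{\bf x}}{d\tau}=f_{\alpha,k}({\bf x})-\mu(\eta)\Lambda_\alpha{\bf x},\qquad \mu:=r^{-k}\Bigl(\frac1k+\frac{r'}{r}\Bigr).
\end{equation*}
It remains to show $\mu=G({\bf x})$. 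Since $p_\alpha({\bf x}(\tau))^{2c}\equiv1$, its $\tau$-derivative vanishes. Pairing the displayed equation with $\nabla(p_\alpha^{2c})$ and applying the Euler identity gives $2cG({\bf x})-2c\mu=0$. So ${\bf x}$ solves (\ref{desing-para}) on $\mathcal{E}$. It is periodic in $\tau$ with period $\int_0^{T_\eta}r^k\,d\eta>0$, because the map $\eta\mapsto\tau$ is a strictly increasing bijection.

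\textbf{Converse.} Note that the identity $\mu=G$ is equivalent, with $X=r^k$ and $dX/d\tau=k r'/r$, to (\ref{X-linear}); I will run the forward computation backwards. Let $\Phi(\tau)=k\int_0^\tau G({\bf x}_\ast)$, so $\Phi(T_\tau)>0$. The homogeneous equation then has multiplier $e^{\Phi(T_\tau)}\neq1$, and a unique $T_\tau$-periodic solution exists. By variation of constants I expect the explicit formula
\begin{equation*}
X(\tau)=\frac{1}{1-e^{-\Phi(T_\tau)}}\int_\tau^{\tau+T_\tau}e^{-(\Phi(s)-\Phi(\tau))}\,ds .
\end{equation*}
One checks directly that it is periodic and satisfies $X'=kGX-1$. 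It is manifestly positive, since $1-e^{-\Phi(T_\tau)}>0$. Positivity is precisely where the hypothesis $\int_0^{T_\tau}G>0$ enters, and I regard this as the main point needing care.

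Then $r=X^{1/k}$ is positive, periodic and $C^1$. Define $\eta$ by (\ref{time-equiv-2}), which is strictly increasing, so that $T_\eta=\int_0^{T_\tau}r^{-k}d\tau$ is finite and positive, and set ${\bf Y}_\ast=r^{\Lambda_\alpha}{\bf x}_\ast$. Reversing the Step 0 and forward computations now shows that (\ref{eq-balance}) holds. Concretely, the horizon equation for ${\bf x}_\ast$ together with $G=r^{-k}(1/k+dr/d\eta\cdot r^{-1})$, which is (\ref{X-linear}) rewritten, gives $d{\bf Y}_\ast/d\eta=-\frac1k\Lambda_\alpha{\bf Y}_\ast+f_{\alpha,k}({\bf Y}_\ast)$. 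Periodicity of ${\bf Y}_\ast$ with period $T_\eta$ follows from that of $r$ and ${\bf x}_\ast$.
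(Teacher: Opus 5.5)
Your proposal is correct and follows essentially the paper's route: the scaling ${\bf Y}=r^{\Lambda_\alpha}{\bf x}$ combined with quasi-homogeneity of $f_{\alpha,k}$ and the time changes (\ref{time-equiv-1})/(\ref{time-equiv-2}), with the converse reduced to the positive $T_\tau$-periodic solution of (\ref{X-linear}) exactly as in Lemma \ref{lem-r}. Your two local variations are valid and somewhat cleaner: you fix the radial coefficient $\mu=G({\bf x})$ via tangency to $\{p_\alpha=1\}$ and the Euler identity $\nabla(p_\alpha^{2c})^T\Lambda_\alpha{\bf x}=2c\,p_\alpha^{2c}$ instead of differentiating $p_\alpha({\bf Y}_0(\eta))$ explicitly, and you use a manifestly positive integral formula for $X$ instead of the paper's case analysis on the sign.
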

In the rest of the present section, we provide a proof how to construct the correspondence.

\subsubsection{From balance laws}
We begin with transforming the identity (\ref{eq-balance}) to (\ref{desing-para}).
Let $\eta$ be a time-scale, and ${\bf Y} = {\bf Y}_0(\eta)$ be a time-periodic solution to (\ref{eq-balance}) with the period $T_\eta > 0$.
Based on the correspondence manner introduced in \cite{asym2}, let
\begin{equation*}
{\bf x}_0(\eta) := p_\alpha({\bf Y}_0(\eta))^{-\Lambda_\alpha} {\bf Y}_0(\eta).
\end{equation*}
Then we have
\begin{align*}
\frac{d}{d\eta}x_{0;i}(\eta) &=  \frac{1}{p_\alpha({\bf Y}_0)^{\alpha_i}} \frac{dY_{0;i}(\eta)}{d\eta} - \frac{\alpha_i}{2c}Y_{0;i}(\eta) \left( \sum_{j\in I_\alpha}  Y_{0;j}(\eta)^{2\beta_i} \right)^{-\frac{\alpha_i}{2c} - 1} \cdot \sum_{j\in I_\alpha} 2\beta_j Y_{0;j}(\eta)^{2\beta_j-1} \frac{dY_{0;j}(\eta)}{d\eta} \\
%%%
	&= \frac{1}{p_\alpha({\bf Y}_0)^{\alpha_i}} \left[   \frac{dY_{0;i}(\eta)}{d\eta} - \frac{\alpha_i Y_{0;i}(\eta) }{c} \frac{1}{p_\alpha({\bf Y})^{2c}} \sum_{j\in I_\alpha} \beta_j Y_{0;j}(\eta)^{2\beta_j-1} \frac{dY_{0;j}(\eta)}{d\eta}  \right] \\
%%%
	&= \frac{1}{p_\alpha({\bf Y}_0)^{\alpha_i}} \left[  \frac{dY_{0;i}(\eta)}{d\eta} - \frac{\alpha_i p_\alpha({\bf Y}_0)^{\alpha_i} x_{0;i}(\eta) }{c} \frac{1}{p_\alpha({\bf Y}_0)^{2c}} \sum_{j\in I_\alpha} \beta_j p_\alpha({\bf Y}_0)^{\alpha_j(2\beta_j-1)} x_{0;j}(\eta)^{2\beta_j-1} \frac{dY_{0;j}(\eta)}{d\eta}  \right] \\
%%%
	&= \frac{1}{p_\alpha({\bf Y}_0)^{\alpha_i}} \left[ \frac{dY_{0;i}(\eta)}{d\eta} - \alpha_i p_\alpha({\bf Y}_0)^{\alpha_i} x_{0;i}(\eta)\sum_{j\in I_\alpha} \frac{x_{0;j}(\eta)^{2\beta_j-1}}{\alpha_j} p_\alpha({\bf Y}_0)^{-\alpha_j}  \frac{dY_{0;j}(\eta)}{d\eta}  \right],
\end{align*}
where the quantities in the sum $\sum_{j\in I_\alpha}$ is identical with those through the sum $\sum_{j=1}^n$ because we have imposed $\beta_j = 0$ whenever $j\not \in I_\alpha$, namely $\alpha_j=0$.
Note that ${\bf x}_0(\eta)$ is periodic with the period $T_\eta$.
On the other hand, we also have 
\begin{align*}
\frac{dY_{0;j}(\eta)}{d\eta} &=  -\frac{\alpha_j}{k}Y_{0;j} + f_{j;\alpha, k}({\bf Y}_0)\\
 	&= -\frac{\alpha_j}{k} p_\alpha ({\bf Y}_0)^{\alpha_j} x_{0;j} + f_{j;\alpha, k}(p_\alpha({\bf Y}_0)^{\Lambda_\alpha}{\bf x}_0) \\
 	&= -\frac{\alpha_j}{k} p_\alpha ({\bf Y}_0)^{\alpha_j} x_{0;j} + f_{j;\alpha, k}(p_\alpha({\bf Y}_0)^{\Lambda_\alpha}{\bf x}_0) \\
 	&= -\frac{\alpha_j}{k} p_\alpha ({\bf Y}_0)^{\alpha_j} x_{0;j} + p_\alpha ({\bf Y}_0)^{k+\alpha_j} f_{j;\alpha, k}({\bf x}_0),
\end{align*}
where we have used the quasi-homogeneity of $f_{\alpha, k}$.
Substituting this identity into the above equation, we have
\begin{align*}
\frac{d}{d\eta}x_{0;i}(\eta) 
%%%
	&= \left\{ -\frac{\alpha_i}{k} x_{0;i} + p_\alpha ({\bf Y}_0)^k f_{i;\alpha, k}({\bf x}_0) \right\} - \alpha_i  x_{0;i}(\eta)\sum_{j\in I_\alpha} \frac{x_{0;j}(\eta)^{2\beta_j-1}}{\alpha_j}\left\{ -\frac{\alpha_j}{k} x_{0;j} + p_\alpha({\bf Y}_0)^k f_{j;\alpha, k}({\bf x}_0) \right\} \\
%%%
	&= \left\{ -\frac{\alpha_i}{k} x_{0;i} + p_\alpha({\bf Y}_0)^k f_{i;\alpha, k}({\bf x}_0) \right\} \\
	&\quad - \alpha_i  x_{0;i}(\eta) \left[ \sum_{j\in I_\alpha} \frac{x_{0;j}(\eta)^{2\beta_j-1}}{\alpha_j}\left\{ -\frac{\alpha_j}{k} x_{0;j}  \right\} 
	+ \sum_{j\in I_\alpha} \frac{x_{0;j}(\eta)^{2\beta_j-1}}{\alpha_j}  p_\alpha({\bf Y}_0)^k f_{j;\alpha, k}({\bf x}_0) \right] \\
	&= p_\alpha({\bf Y}_0)^k \left\{  f_{i;\alpha, k}({\bf x}_0)  - \alpha_i  x_{0;i}(\eta) \sum_{j\in I_\alpha} \frac{x_{0;j}(\eta)^{2\beta_j-1}}{\alpha_j}   f_{j;\alpha, k}({\bf x}_0)\right\}\\
%%%%%%%%
	&=  p_\alpha({\bf Y}_0(\eta))^k \left\{  f_{i;\alpha, k}({\bf x}_0)  - G({\bf x}_0(\eta))\alpha_i  x_{0;i}(\eta) \right\}.
\end{align*}
Summarizing, 
\begin{equation*}
\frac{d{\bf x_0}}{d\eta}(\eta) = p_\alpha({\bf Y}_0(\eta))^k \left[ f_{\alpha, k}({\bf x}_0)  - G({\bf x}_0(\eta) )\Lambda_\alpha {\bf x}_0 \right]. 
\end{equation*}
Now we shall set $\eta$ so that $\tau$ and $\eta$ satisfy (\ref{time-equiv-1}), which yields
\begin{equation*}
\frac{d{\bf x}_0}{d\tau} = f_{\alpha, k}({\bf x}_0(\tau))  - G({\bf x}_0(\tau) )\Lambda_\alpha {\bf x}_0(\tau).
\end{equation*}
In particular, 
\begin{equation*}
\frac{dt}{d\eta} = p_\alpha({\bf Y}_0(\eta))^k \frac{1}{2c}\{1+(2c-1)p_\alpha({\bf x}_0(\tau(\eta)))^{2c}\} (1- p_\alpha({\bf x}_0(\tau(\eta)))^{2c})^k.
\end{equation*}

\subsubsection{From dynamics at infinity}
Next let $\gamma_\ast \in P_{\mathcal{E}}(g)$ with the period $T_\tau > 0$ and $\gamma = \{{\bf x}_\ast(\tau)\}_{\tau \in [0,T_\tau]}$.
In particular, we have
\begin{equation*}
\frac{d{\bf x}_\ast}{d\tau} = f_{\alpha, k}({\bf x}_\ast)  - G({\bf x}_\ast(\tau) )\Lambda_\alpha {\bf x}_\ast.
\end{equation*}
Based on the treatment of stationary blow-ups \cite{asym2}, {\rm formally} let
\begin{equation*}
{\bf Y}_\ast(\tau) := r(\tau)^{\Lambda_\alpha} {\bf x}_\ast(\tau).
\end{equation*}
Now the quantity $r(\tau)$ can depend on time $\tau$, and is determined later.
First, from the quasi-homogeneity, we have
\begin{equation*}
f_{\alpha, k}({\bf x}_\ast (\tau)) = r(\tau)^{-(kI + \Lambda_\alpha)}f_{\alpha, k}({\bf Y}_\ast (\tau(\eta)))
\end{equation*}
and hence
\begin{align*}
f_{\alpha, k}({\bf x}_\ast)  - G({\bf x}_\ast(\tau) )\Lambda_\alpha {\bf x}_\ast &= r(\tau)^{-(kI + \Lambda_\alpha)} f_{\alpha, k}({\bf Y}_\ast)  - G({\bf x}_\ast(\tau) )\Lambda_\alpha r^{-\Lambda}{\bf Y}_\ast,\\
%%%%
G({\bf x}_\ast) &= \sum_{j\in I_\alpha} \frac{x_{\ast; j}(\tau)^{2\beta_j-1}}{\alpha_j}   f_{j;\alpha, k}({\bf x}_\ast) \\
	&= \sum_{j\in I_\alpha} \frac{r^{-\alpha_j(2\beta_j-1)}Y_{\ast; j}(\tau)^{2\beta_j-1}}{\alpha_j} r^{-(k+\alpha_j)} f_{j;\alpha, k}({\bf Y}_\ast)\\
	&= r^{-2c-k}\sum_{j\in I_\alpha} \frac{Y_{\ast; j}(\tau)^{2\beta_j-1}}{\alpha_j} f_{j;\alpha, k}({\bf Y}_\ast).
\end{align*}
Using the commutativity of $\Lambda_\alpha$ and $r^{-\Lambda_\alpha}$, we have
\begin{align*}
f_{\alpha, k}({\bf x}_\ast)  - G({\bf x}_\ast(\tau) )\Lambda_\alpha {\bf x}_\ast &= r(\tau)^{-(kI+\Lambda_\alpha)} \left[ f_{\alpha, k}({\bf Y}_\ast) - r^{-2c}\left\{ \sum_{j\in I_\alpha} \frac{Y_{\ast; j}(\tau)^{2\beta_j-1}}{\alpha_j} f_{j;\alpha, k}({\bf Y}_\ast) \right\} \Lambda_\alpha  {\bf Y}_\ast \right].
\end{align*}
Here we {\em impose}
\begin{equation}
\label{constraint-r}
r(\tau)^k G({\bf x}_\ast(\tau)) \equiv \frac{1}{k} + H(\tau).
\end{equation}
On the other hand, we have
\begin{align*}
\frac{dY_{\ast; i}}{d\tau} &= \frac{d}{d\tau}(r(\tau)^{\alpha_i}x_{\ast,i}(\tau)) 
%%%
	= x_{\ast,i}(\tau)(\alpha_i r(\tau)^{\alpha_i-1}) \frac{dr}{d\tau}+ r(\tau)^{\alpha_i} \frac{d}{d\tau}(x_{\ast,i}(\tau))\\
%%%
	&= \alpha_i Y_{\ast,i}(\tau) \frac{1}{r} \frac{dr}{d\tau}+ r(\tau)^{\alpha_i} \frac{d}{d\tau}(x_{\ast,i}(\tau)).
\end{align*}
Summarizing, 
\begin{align}
\notag
\frac{d{\bf Y}_\ast}{d\tau} &= \Lambda_\alpha {\bf Y}_\ast \frac{1}{r} \frac{dr}{d\tau}+ r(\tau)^{\Lambda_\alpha} \frac{d}{d\tau}(x_{\ast,i}(\tau)) \\
%%%
\label{x_to_Y_pre}
 & = \Lambda_\alpha {\bf Y}_\ast\frac{1}{r} \frac{dr}{d\tau}+ r(\tau)^{-k} \left\{ f_{\alpha, k}({\bf Y}_\ast) - \left(\frac{1}{k} + H(\tau)\right) \Lambda_\alpha  {\bf Y}_\ast \right\}.
\end{align}
Our objective here is to {\em extract the balance law (\ref{eq-balance})}, which yields
\begin{equation*}
\frac{1}{r}\frac{dr}{d\tau} - r^{-k}H(\tau) = 0 \quad \Leftrightarrow \quad \frac{1}{k}\frac{d(r^k)}{d\tau} = H(\tau).
\end{equation*}
Because $H(\tau)$ is introduced satisfying (\ref{constraint-r}), we impose this constraint: 
\begin{equation*}
\frac{d(r^k)}{d\tau} =  kr^k G({\bf x}_\ast(\tau)) - 1.
\end{equation*}
Let $X(\tau) = r(\tau)^k$, which is described as a solution to the following inhomogeneous, linear nonautonomous differential equation (\ref{X-linear}).
%:
%\begin{equation}
%\label{X-linear}
%\frac{dX}{d\tau} =  k G({\bf x}_\ast(\tau))X - 1.
%\end{equation}
In particular, we have
\begin{equation*}
H(\tau) = G({\bf x}_\ast(\tau)) r(\tau)^k - \frac{1}{k}.
\end{equation*}
Note that $G({\bf x}_\ast(\tau))$ is $T_\tau$-periodic.
Therefore a general theory of linear inhomogeneous differential equations (e.g., \cite{C2006}) yields a characterization of a positive solution $X(\tau)$, as well as $r(\tau)$.

\begin{lem}
\label{lem-r}
Assume that, for (\ref{X-linear}),
\begin{equation}
\label{integral-exp}
\int_0^{T_\tau} G({\bf x}_\ast (\tau)) d\tau \not = 0. 
\end{equation}
Then (\ref{X-linear}) admits a unique $T_\tau$-periodic solution $X(\tau)$, which is bounded away from $0$.
In particular, for any $\tau$, quantities $\int_0^{T_\tau} G({\bf x}_\ast (\tau)) d\tau$ and $X(\tau)$ have identical signs.
\end{lem}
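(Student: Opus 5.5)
The plan is to solve the scalar linear equation (\ref{X-linear}) explicitly by variation of constants. Set $a(\tau) := kG({\bf x}_\ast(\tau))$, which is continuous and $T_\tau$-periodic because ${\bf x}_\ast$ is, and let
\begin{equation*}
\Phi(\tau) := \exp\left(\int_0^\tau a(s)\,ds\right),\qquad \mu := \Phi(T_\tau) = \exp\left(k\int_0^{T_\tau} G({\bf x}_\ast(s))\,ds\right).
\end{equation*}
Periodicity of $a$ gives the shift identity $\Phi(\tau+T_\tau)=\mu\Phi(\tau)$. Every solution of (\ref{X-linear}) then has the form
\begin{equation*}
X(\tau) = \Phi(\tau)\Big(X(0) - \int_0^\tau \Phi(s)^{-1}ds\Big).
\end{equation*}

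\textbf{Existence and uniqueness.} A solution is $T_\tau$-periodic if and only if $X(T_\tau)=X(0)$. The period map $X(0)\mapsto X(T_\tau)$ is affine, with linear part $\mu$. Hypothesis (\ref{integral-exp}) says exactly that $\mu\neq 1$, so the equation $X(T_\tau)=X(0)$ has the unique solution
\begin{equation*}
X(0) = \frac{\mu}{\mu-1}\int_0^{T_\tau}\Phi(s)^{-1}ds .
\end{equation*}
Uniqueness also follows directly: the difference of two periodic solutions solves the homogeneous equation, so it equals $c\Phi$, and $c\Phi$ is periodic only if $c=0$ when $\mu\neq1$.

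\textbf{Sign of $X$.} I would rewrite the periodic solution as an integral over a full period, which makes its sign visible at every $\tau$. The main step is to check, using $\Phi(\tau+T_\tau)=\mu\Phi(\tau)$, that
\begin{equation*}
X(\tau) = \frac{1}{\mu-1}\int_\tau^{\tau+T_\tau}\frac{\Phi(\tau+T_\tau)}{\Phi(s)}\,ds .
\end{equation*}
This is verified by differentiating the right-hand side: the product rule gives the term $a(\tau)X(\tau)$, and the boundary terms give $\frac{1}{\mu-1}\big(\Phi(\tau+T_\tau)/\Phi(\tau+T_\tau) - \Phi(\tau+T_\tau)/\Phi(\tau)\big) = \frac{1-\mu}{\mu-1} = -1$. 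So the right-hand side solves (\ref{X-linear}). It is periodic because the integrand is invariant under shifting both $\tau$ and $s$ by $T_\tau$. By uniqueness it is therefore the periodic solution $X$.

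The integrand is strictly positive, and $\mu-1$ has the same sign as $\int_0^{T_\tau}G\,d\tau$. Hence $X(\tau)$ has the same sign as $\int_0^{T_\tau}G\,d\tau$ for every $\tau$.

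\textbf{Bounded away from zero.} By continuity and periodicity, the integrand is bounded below by the positive constant $m := \min\Phi/\max\Phi$ over $[0,2T_\tau]$. This gives $|X(\tau)|\ge T_\tau m/|\mu-1|>0$ uniformly in $\tau$.

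The only delicate point is the shifted-integral identity for $X$. If differentiating it directly turns out to be awkward, I would instead substitute $\tau=0$ and compare with the value of $X(0)$ found above, then rely on uniqueness.
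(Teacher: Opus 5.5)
Your proof is correct. The existence and uniqueness part coincides with the paper's. Both solve (\ref{X-linear}) with the integrating factor $e^{a_G(\tau)}$ (your $\Phi$), impose $X(T_\tau)=X(0)$, and obtain the constant $C=\frac{\mu}{\mu-1}\int_0^{T_\tau}\Phi(s)^{-1}ds$, with your $\mu=e^{a_G(T_\tau)}$. This constant is well defined exactly because $\mu\neq 1$.

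The routes differ in the sign argument. The paper keeps $X(\tau)=e^{a_G(\tau)}(C-I(\tau))$ on $[0,T_\tau]$ and splits into two cases, using that $I$ is increasing: if $\mu>1$ then $C>I(T_\tau)\ge I(\tau)$, and if $\mu<1$ then $C<0\le I(\tau)$. The extension to all $\tau$ and the uniform lower bound on $|X|$ are left implicit, via periodicity and continuity.

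Your shifted full-period formula is the periodic Green's function representation of the same solution. For $\tau\in[0,T_\tau]$ it rearranges to $\frac{\Phi(\tau)}{\mu-1}\bigl(\mu\int_\tau^{T_\tau}\Phi^{-1}ds+\int_0^\tau\Phi^{-1}ds\bigr)$, which is exactly the paper's expression. Its positive kernel handles both signs at once and yields the explicit bound $|X|\ge T_\tau m/|\mu-1|$. The price is the derivative check, which you carry out correctly using $\Phi'(\tau+T_\tau)=a(\tau)\Phi(\tau+T_\tau)$.

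One wording point: $\Phi$ itself is not periodic when $\mu\neq 1$. So the reduction to $\tau\in[0,T_\tau]$ in your lower bound (placing $s$ and $\tau+T_\tau$ in $[0,2T_\tau]$) should be attributed to the invariance of the integrand under $(\tau,s)\mapsto(\tau+T_\tau,s+T_\tau)$. You had already established that invariance.
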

\begin{proof}
For simplicity, we shall write $T = T_\tau$.
Letting
\begin{equation*}
a_G(\tau) := k\int_0^{\tau} G({\bf x}_\ast(s))ds,\quad I(\tau) = \int_0^{\tau} e^{-a_G(s)}ds,
\end{equation*}
the general solution is written as
\begin{equation*}
X(\tau) = e^{a_G(\tau)} (C- I(\tau)).
\end{equation*}
Indeed, we have
\begin{align*}
\frac{dX}{d\tau}(\tau) &= \frac{da_G}{d\tau}(\tau) e^{a_G(\tau)} (C- I(\tau)) + e^{a_G(\tau)} \frac{dI}{d\tau}(\tau) 
%%%
	= kG({\bf x}_\ast(\tau)) X - 1.
\end{align*}
Using the $T_\tau$-periodicity of $X(\tau)$, we have
\begin{equation*}
e^{a_G(T)} (C- I(T)) = e^{a_G(0)} (C- I(0)) = C,
\end{equation*}
and hence
\begin{equation*}
C = e^{a_G(T)} \frac{I(T)}{e^{a_G(T)} - 1}.
\end{equation*}
By assumption, we have $a_G(T)\not = 0$, in particular $e^{a_G(T)} \not = 1$ holds, implying that the constant $C$ is well-defined.
Next assume that there is a time $\tau$ such that $X(\tau) = 0$, which means that
\begin{equation*}
C = e^{a_G(T)} \frac{I(T)}{e^{a_G(T)} - 1} = I(\tau).
\end{equation*}
Now $I(\tau)$ is strictly increasing because $ e^{-a_G(\tau)} > 0$, and we have
\begin{equation*}
0 < I(\tau) < I(T), \quad \text{ for all } \tau \in (0,T)
\end{equation*}
The sign of $X(\tau)$ is therefore determined as follows.
\begin{description}
\item[Case 1: $\int_0^T G({\bf x}_\ast (\tau)) d\tau > 0$.] 
\end{description}
Because $e^{a_G(T)} > 1$, we have
\begin{equation*}
C = e^{a_G(T)}\frac{I(T)}{e^{a_G(T)} - 1} > I(T) \geq I(\tau), \quad \text{ for all } \tau \in [0,T],
\end{equation*}
which indicates $X(\tau) > 0$.
\begin{description}
\item[Case 2: $\int_0^T G({\bf x}_\ast (\tau)) d\tau < 0$.] 
\end{description}
Because $e^{a_G(T)} < 1$, we have
\begin{equation*}
C = e^{a_G(T)}\frac{I(T)}{e^{a_G(T)} - 1} = -e^{a_G(T)}\frac{I(T)}{1 - e^{a_G(T)}} < 0 \leq I(\tau), \quad \text{ for all } \tau \in [0,T],
\end{equation*}
which indicates $X(\tau) < 0$ and the proof is completed.
\end{proof}
From this lemma, $X(\tau) > 0$ is always guaranteed provided that the quantity (\ref{integral-exp}) is positive, in which case $r(\tau) = X(\tau)^{1/k}$ is well-defined for any $\tau$.
Introducing (\ref{time-equiv-2}):
\begin{equation*}
\frac{d\eta}{d\tau} = r(\tau)^{-k},
\end{equation*}
our system (\ref{x_to_Y_pre}) is written as
\begin{align*}
\frac{d{\bf Y}_\ast}{d\eta} = f_{\alpha, k}({\bf Y}_\ast(\tau(\eta))) - \frac{1}{k} \Lambda_\alpha  {\bf Y}_\ast(\tau(\eta)),
\end{align*}
namely, the balance law (\ref{eq-balance}) with the time variable $\eta$.
Note that (\ref{time-equiv-1}) and (\ref{time-equiv-2}) yields the following identity:
\begin{equation*}
p_\alpha({\bf Y}_0(\eta)) = r(\tau(\eta)).
\end{equation*}
Summarizing the above arguments, the proof of Theorem \ref{thm-correspondence-C0} is completed.

\begin{rem}
The condition that (\ref{integral-exp}) is positive implies 
\begin{equation*}
\mu\equiv \exp \left( - \int_0^{T_\tau} G({\bf x}_\ast(\tau)) d\tau \right) < 1,
\end{equation*}
which corresponds to the observation that the local stable manifold of $\gamma_\ast$ is distributed in $\mathcal{D}$, which is already prescribed in the case of stationary blow-ups \cite{asym2}.
\end{rem}

\subsubsection{Remark on $G({\bf x})$}

One remark on $G({\bf x})$ related to a condition (\ref{integral-exp}) from the balance law is provided here.
Assume that ${\bf Y}_0(\eta)$ is a periodic solution to (\ref{eq-balance}) with period $T_\eta > 0$, and let
\begin{equation*}
{\bf x}_0(\tau(\eta)) := p_\alpha({\bf Y}_0(\eta))^{-\Lambda_\alpha} {\bf Y}_0(\eta),\quad \frac{d\tau}{d\eta} = p_\alpha({\bf Y}_0(\eta))^k
\end{equation*}
Now ${\bf x}_0(\tau(\eta))$ is $T_\tau$-periodic, where $T_\tau$ is determined by 
\begin{equation}
\label{period}
T_\tau = \int_0^{T_\eta} p_\alpha({\bf Y}_0(\eta))^k d\eta
\end{equation}
via (\ref{time-equiv-1}).
Moreover, let
\begin{equation*}
r(\tau(\eta)) := p_\alpha({\bf Y}_0(\eta)),\quad X(\tau) := r(\tau)^k
\end{equation*}
Then $X(\tau) = X(\tau(\eta)) > 0$ always holds provided $p_\alpha({\bf Y}_0(\eta))>0$ for all $\eta$.
Moreover, $X(\tau)$ is $T_\tau$-periodic satisfying
\begin{equation*}
\frac{dX}{d\tau} = kG({\bf x}_0(\tau))X - 1.
\end{equation*}
Dividing both sides by $X$, 
\begin{equation*}
\frac{d}{d\tau}\log X(\tau) =  kG({\bf x}_0(\tau)) - \frac{1}{X(\tau)}.
\end{equation*}
Here integrate both sides in $\tau$ from $0$ to $T_\tau$.
The integral of the left-hand side becomes $0$ from periodicity, and we therefore have
\begin{equation*}
k\int_0^{T_\tau} G({\bf x}_0(\tau)) d\tau = \int_0^{T_\tau} \frac{1}{X(\tau)}d\tau.
\end{equation*}
Because 
\begin{equation*}
\frac{1}{X(\tau)}d\tau = d\eta,
\end{equation*}
we finally have
\begin{equation}
\label{period-eta}
k\int_0^{T_\tau} G({\bf x}_0(\tau)) d\tau = \int_0^{T_\eta} d\eta = T_\eta > 0.
\end{equation}

As a consequence, we have the following criterion of (\ref{integral-exp}), in particular its positivity, from the viewpoint of the balance law.
\begin{cor}
\label{cor-from-balance-to-desing}
Let ${\bf Y}_0(\eta)$ be a periodic solution to the balance law (\ref{eq-balance}) with the period $T_\eta > 0$ satisfying $p_\alpha({\bf Y}_0(\eta))>0$ on the interval $[0, T_\eta]$.
Then the corresponding periodic orbit $\gamma_{0,\ast} = \{{\bf x}_0(\tau(\eta))\}_{\eta\in [0,T_\eta]} \in P_{\mathcal{E}}(g)$ satisfies (\ref{integral-exp}), in particular the quantity is positive.
As a consequence, the solution ${\bf Y}_\ast(\eta)$ constructed from ${\bf x}_0(\tau(\eta))$ is identical with ${\bf Y}_0(\eta)$.
\end{cor}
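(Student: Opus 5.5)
The plan is to assemble the corollary from the forward half of Theorem \ref{thm-correspondence-C0}, the computation (\ref{period-eta}) just carried out, and the uniqueness statement of Lemma \ref{lem-r}.

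First, by the forward direction of Theorem \ref{thm-correspondence-C0}, the curve ${\bf x}_0(\tau(\eta)) = p_\alpha({\bf Y}_0(\eta))^{-\Lambda_\alpha}{\bf Y}_0(\eta)$, with $\tau$ defined by (\ref{time-equiv-1}), solves (\ref{desing-para}). It lies on $\mathcal{E}$, since $p_\alpha$ scales as $p_\alpha(s^{\Lambda_\alpha}{\bf Y}) = s\,p_\alpha({\bf Y})$ and hence $p_\alpha({\bf x}_0)=1$. Because $d\tau/d\eta = p_\alpha({\bf Y}_0)^k>0$, the map $\eta\mapsto\tau$ is a strictly increasing bijection. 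Periodicity of ${\bf Y}_0$ then gives that ${\bf x}_0$ is $T_\tau$-periodic with $T_\tau$ as in (\ref{period}), so $\gamma_{0,\ast}\in P_{\mathcal{E}}(g)$.

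Second, set $r(\tau(\eta)) := p_\alpha({\bf Y}_0(\eta))$ and $X = r^k$. I would verify that $X$ satisfies (\ref{X-linear}) along ${\bf x}_0$. The key identity is that $G({\bf x}_0) = r^{-2c-k}\sum_{j\in I_\alpha}\frac{Y_{0;j}^{2\beta_j-1}}{\alpha_j} f_{j;\alpha,k}({\bf Y}_0)$, which follows from the scaling computation done in the converse direction of the theorem. Combining this with the balance law gives
\[
\frac{1}{2c}\frac{d}{d\eta}p_\alpha({\bf Y}_0)^{2c} = -\frac{1}{k}p_\alpha^{2c} + p_\alpha^{2c+k}G({\bf x}_0),
\]
using $\sum_j \beta_j Y_j^{2\beta_j} \cdot \alpha_j/c = p_\alpha^{2c}$ after multiplying by $\alpha_j\beta_j=c$. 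Converting to $\tau$ via (\ref{time-equiv-1}) yields $dX/d\tau = kGX - 1$. This calculation is the only step that needs care with the exponents.

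With that identity in hand, the argument preceding the corollary applies: divide by $X>0$, integrate over one period, and use $d\eta = X^{-1}d\tau$. This gives (\ref{period-eta}), namely $k\int_0^{T_\tau}G({\bf x}_0)\,d\tau = T_\eta>0$, so (\ref{integral-exp}) holds with a positive sign.

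Finally, for the identification, apply the converse part of Theorem \ref{thm-correspondence-C0} to $\gamma_{0,\ast}$. By Lemma \ref{lem-r}, (\ref{X-linear}) has a \emph{unique} $T_\tau$-periodic solution. Since $p_\alpha({\bf Y}_0(\eta(\tau)))^k$ is such a solution, it coincides with the $X$ used in the construction, and so $r(\tau) = p_\alpha({\bf Y}_0(\eta(\tau)))$. Hence ${\bf Y}_\ast = r^{\Lambda_\alpha}{\bf x}_0 = {\bf Y}_0$ pointwise. Moreover, the time change (\ref{time-equiv-2}) is exactly the inverse of (\ref{time-equiv-1}), so the parametrizations also agree; fixing $\eta(0)=0$ removes the phase ambiguity. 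The main obstacle is only the bookkeeping in the $X$-equation verification; the rest is immediate from earlier results.
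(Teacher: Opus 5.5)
Your proposal is correct and follows the paper's own argument. You set $r = p_\alpha({\bf Y}_0)$ and $X = r^k$, check that $X$ is a positive $T_\tau$-periodic solution of (\ref{X-linear}), divide by $X$ and integrate over a period to get $k\int_0^{T_\tau}G({\bf x}_0)\,d\tau = T_\eta>0$, and use the uniqueness in Lemma \ref{lem-r} to identify ${\bf Y}_\ast$ with ${\bf Y}_0$. Your direct check of $dX/d\tau = kGX-1$ via $\frac{1}{2c}\frac{d}{d\eta}p_\alpha({\bf Y}_0)^{2c}$ fills in a step the paper only asserts; that step can also be read off by reversing the derivation of (\ref{x_to_Y_pre}).
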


\section{Correspondence of invariant subbundles and characteristic multipliers}
\label{section-correspondence-2}

Before discussing the correspondence of linearized structure, a brief review of characteristic multipliers is provided.
Consider a differential operator 
\begin{equation*}
\mathcal{L}(\lambda){\bf w}(t) = (A(t) - \lambda I ){\bf w}(t) - \frac{d{\bf w}}{dt}(t)
\end{equation*}
on $C(\mathbb{R}, \mathbb{R}^n)$.
Let
\begin{equation*}
C_T(\mathbb{R}, \mathbb{R}^n) = \{{\bf w}\in C(\mathbb{R}, \mathbb{R}^n) \mid {\bf w}(t+T) = {\bf w}(t)\}
\end{equation*}
As a linear subspace of $C(\mathbb{R}, \mathbb{R}^n)$.
Let the kernel of $\mathcal{L}(\lambda)$ restricted to the linear subspace of differentiable functions
$C_T^1(\mathbb{R}, \mathbb{R}^n)$ be
\begin{equation*}
\ker_{\rm per}\mathcal{L}(\lambda) := \ker \mathcal{L}(\lambda)\cap C_T^1(\mathbb{R}, \mathbb{R}^n).
\end{equation*}
Then we have the following.
\begin{prop}[e.g., \cite{C2006}]
\label{prop-Floquet}
For some $T>0$, let $A(t)$ be a continuous, $T$-periodic matrix-valued function: $A(t+T) = A(t)$.
Then, for $\lambda \in \mathbb{C}$, the following are equivalent:
\begin{itemize}
\item $\ker_{\rm per}\mathcal{L}(\lambda) \not = \{0\}$.
\item $\mu = e^{\lambda T}$ is a characteristic multiplier of 
the linear differential equation
\begin{equation*}
{\bf v}' = A(t){\bf v},\quad {}' = \frac{d}{dt}.
\end{equation*}
\end{itemize}
\end{prop}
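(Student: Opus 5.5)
The plan is to use Floquet theory directly. By the Floquet theorem, the fundamental matrix of ${\bf v}' = A(t){\bf v}$ can be written as $\Phi(t) = P(t)e^{Bt}$, where $P$ is $T$-periodic and invertible with $P(0)=I$, and $B$ is a constant (possibly complex) matrix. The characteristic multipliers are the eigenvalues of the monodromy matrix $\Phi(T) = e^{BT}$. Throughout I work over $\mathbb{C}$, so that complex $\lambda$ and complex periodic kernels make sense.

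\emph{First direction.} Suppose ${\bf w}\in\ker_{\rm per}\mathcal{L}(\lambda)$ with ${\bf w}\neq 0$. Then
\begin{equation*}
{\bf w}' = (A(t)-\lambda I){\bf w}.
\end{equation*}
Set ${\bf v}(t) := e^{\lambda t}{\bf w}(t)$. A direct differentiation gives ${\bf v}' = \lambda{\bf v} + e^{\lambda t}(A-\lambda I){\bf w} = A(t){\bf v}$, so ${\bf v}$ solves the original system. Periodicity of ${\bf w}$ then gives
\begin{equation*}
{\bf v}(t+T) = e^{\lambda T}{\bf v}(t).
\end{equation*}
Since ${\bf v}(t)=\Phi(t){\bf v}(0)$ by uniqueness, taking $t=0$ yields $\Phi(T){\bf v}(0) = e^{\lambda T}{\bf v}(0)$. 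Moreover ${\bf v}(0)={\bf w}(0)\neq 0$, because a nonzero solution of a linear ODE never vanishes. Hence $\mu=e^{\lambda T}$ is an eigenvalue of the monodromy matrix, i.e. a characteristic multiplier.

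\emph{Converse.} Suppose $\Phi(T){\bf c} = e^{\lambda T}{\bf c}$ for some ${\bf c}\neq 0$. Let ${\bf v}(t)=\Phi(t){\bf c}$. Because $A$ is $T$-periodic, $\Phi(t+T) = \Phi(t)\Phi(T)$, and therefore ${\bf v}(t+T) = e^{\lambda T}{\bf v}(t)$. Define ${\bf w}(t) := e^{-\lambda t}{\bf v}(t)$. Then ${\bf w}$ is $T$-periodic and $C^1$, it satisfies ${\bf w}' = (A-\lambda I){\bf w}$, and it is nonzero. Hence $\ker_{\rm per}\mathcal{L}(\lambda)\neq\{0\}$.

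The only step needing care is the identity $\Phi(t+T)=\Phi(t)\Phi(T)$. It holds because both sides solve the matrix equation $Z'=A(t)Z$ (using $A(t+T)=A(t)$) and they agree at $t=0$. A second subtlety is that $\lambda$ is determined by $\mu$ only modulo $2\pi i/T$. This causes no problem, since the argument works for every $\lambda$ with $e^{\lambda T}=\mu$. No other obstacle arises.
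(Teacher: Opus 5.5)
Your proof is correct and is the standard Floquet argument the paper relies on. The paper gives no proof of its own (it cites \cite{C2006}), but in the subsection on characteristic multipliers it uses exactly your substitution ${\bf v}=e^{\lambda\tau}{\bf w}$ and ${\bf v}(\tau+T_\tau)=e^{\lambda T_\tau}{\bf v}(\tau)$. Your explicit passage to complex-valued functions is in fact needed: on the paper's literal space $C(\mathbb{R},\mathbb{R}^n)$ the periodic kernel is trivial for every non-real $\lambda$, so negative or non-real multipliers would otherwise violate the stated equivalence.
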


Now the correspondence of \lq\lq linearized" structures is investigated.
To this end, like the stationary case \cite{asym2}, we impose the following assumption.
\begin{ass}
\label{ass-diff-QH}
For the desingularized vector field $g$ on $\overline{\mathcal{D}}$, $D_{\bf x}\tilde f({\bf x}) = D_{\bf x}\tilde f_{\alpha, k}({\bf x})$ holds for any ${\bf x}\in \mathcal{E}$.
\end{ass}

\subsection{Transversal Invariant subbundles}

\subsubsection{From balance laws}
We begin with extracting an invariant structure for the balance law, based on {\em a common feature} of periodic solutions and {\em a structure associated with quasi-homogeneity of $f_{\alpha, k}$} observed in \cite{asym2}.
Let ${\bf Y}(\eta) = {\bf Y}_0(\eta)$ be a time-periodic solution to (\ref{eq-balance}) with the period $T_\eta > 0$, and $\gamma_0 := \{ {\bf Y}_0(\eta)\}_{\eta \in [0,T_\eta]}$. 
Here we consider the variational equation along ${\bf Y}_0(\eta)$:
\begin{equation}
\label{var-balance}
{\bf V}' = A(\eta) {\bf V},\quad A(\eta) =  -\frac{1}{k}\Lambda_\alpha + D_{\bf Y}f_{\alpha, k}({\bf Y}_0(\eta)).
\end{equation}
First, a function $e_1(\eta):= {\bf Y}_0' (\eta)$ is a solution to (\ref{var-balance}) by a general property of periodic solutions:
\begin{equation*}
e_1'(\eta) = A(\eta)e_1(\eta).
\end{equation*}
Next let $e_2(\eta) := \Lambda_\alpha {\bf Y}_0(\eta)$ as a candidate of invariant structure (cf. \cite{asym2}), and let
\begin{equation*}
R_{\bf Y}^{(2)} (\eta) := {\rm span}\{e_1(\eta), e_2(\eta)\}.
\end{equation*}
Then we have
\begin{align*}
A(\eta) e_2(\eta) &= -\frac{1}{k}\Lambda_\alpha (\Lambda_\alpha {\bf Y}_0) + D_{\bf Y}f_{\alpha, k}({\bf Y}_0(\eta))\Lambda_\alpha {\bf Y}_0\\
	&= -\frac{1}{k}\Lambda_\alpha (\Lambda_\alpha {\bf Y}_0) + (kI + \Lambda_\alpha) f_{\alpha, k}({\bf Y}_0(\eta)) \\
	&= \Lambda_\alpha \left( -\frac{1}{k}\Lambda_\alpha {\bf Y}_0(\eta) + f_{\alpha, k}({\bf Y}_0(\eta) \right) + kf_{\alpha, k}({\bf Y}_0(\eta))\\
%%%
	&= \Lambda_\alpha {\bf Y}_0'(\eta) + k \left\{ {\bf Y}_0'(\eta) + \frac{1}{k}\Lambda_\alpha {\bf Y}_0(\eta) \right\}\\
%%%
	&= e_2'(\eta) + k e_1'(\eta) + e_2(\eta).
\end{align*}
Letting
\begin{equation}
\label{2dim-balance}
E(\cdot) := [e_1(\cdot), e_2(\cdot)] : \mathbb{R}\to M_{n,2}(\mathbb{R}),
\end{equation}
the above identity implies
\begin{equation*}
A(\eta)E(\cdot) = E'(\eta) + E(\eta)\begin{pmatrix}
0 & k \\ 0 & 1
\end{pmatrix}
\end{equation*}
and we have
\begin{equation*}
A(\eta)E(\eta) - E'(\eta) = E(\eta) \begin{pmatrix}
0 & k \\ 0 & 1
\end{pmatrix} \in R_{\bf Y}^{(2)} (\eta).
\end{equation*}
This inclusion implies that $R_{\bf Y}^{(2)} (\eta)$ is an invariant fiber of $\mathbb{R}^n$.
As a consequence, we have the following proposition.

\begin{prop}
\label{prop-balance-inv-subbundle}
Let ${\bf Y}(\eta) = {\bf Y}_0(\eta)$ be a time-periodic solution to (\ref{eq-balance}).
Assume that the matrix $E(\eta)$ in (\ref{2dim-balance}) has a full rank at a time $\eta = \eta_0$.
Then the vector bundle
\begin{equation*}
R_{\bf Y}^{(2)} := \bigcup_{\eta\in [0,T_\eta]}{\rm span} \left\{\dot {\bf Y}_0(\eta), \Lambda_\alpha {\bf Y}_0(\eta)\right\}
\end{equation*}
is a $2$-dimensional invariant subbundle of $T_{\gamma_0}\mathbb{R}^n$ associated with (\ref{var-balance}).
\end{prop}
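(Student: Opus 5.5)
The plan is to turn the algebraic identity $A(\eta)E(\eta) - E'(\eta) = E(\eta)B$, with $B = \begin{pmatrix} 0 & k \\ 0 & 1\end{pmatrix}$, into an invariance statement for the solution operator of (\ref{var-balance}). Then we use uniqueness of solutions to propagate the full-rank property from $\eta_0$ to all $\eta$. Let $\Phi(\eta,s)$ denote the fundamental (transition) matrix of (\ref{var-balance}). The first step is to solve the constant-coefficient matrix equation $C' = -BC$ with $C(\eta_0)=I$, so $C(\eta) = e^{-B(\eta-\eta_0)}$, which is invertible. Put $\tilde E(\eta) := E(\eta)C(\eta)$. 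Then
\begin{equation*}
\tilde E' = E'C + EC' = (AE - EB)C - EBC \cdot 0 + E C' = AEC - EBC - EBC + EBC ,
\end{equation*}
which is sloppy bookkeeping; done carefully, $\tilde E' = E'C + EC' = (AE - EB)C + E(-BC)$ is the wrong sign. So we choose instead $C' = BC$, which gives $\tilde E' = AEC - EBC + EBC = A\tilde E$. Hence both columns of $\tilde E$ are genuine solutions of (\ref{var-balance}), and $\tilde E(\eta) = \Phi(\eta,\eta_0)\tilde E(\eta_0) = \Phi(\eta,\eta_0)E(\eta_0)$.

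The second step is the rank and invariance argument. Since $\Phi(\eta,\eta_0)$ is invertible and $E(\eta_0)$ has rank $2$, the matrix $\tilde E(\eta)$ has rank $2$ for every $\eta\in\mathbb{R}$. Because $C(\eta)=e^{B(\eta-\eta_0)}$ is invertible, $E(\eta) = \tilde E(\eta)C(\eta)^{-1}$ also has rank $2$, and the column spaces agree:
\begin{equation*}
R^{(2)}_{\bf Y}(\eta) = {\rm span}\,E(\eta) = {\rm span}\,\tilde E(\eta) = \Phi(\eta,s)R^{(2)}_{\bf Y}(s)\quad \text{for all } \eta,s.
\end{equation*}
This is exactly invariance of the family of fibers under the linearized flow.

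The third step is to check that $R^{(2)}_{\bf Y}$ is a (continuous, indeed $C^{r-1}$) vector bundle over $\gamma_0$. Local triviality follows from the continuous frame $\{e_1(\eta),e_2(\eta)\}$, which is everywhere linearly independent. Well-definedness over $\gamma_0$ rather than over $\eta$ follows from the $T_\eta$-periodicity of $e_1 = {\bf Y}_0'$ and $e_2 = \Lambda_\alpha{\bf Y}_0$, since ${\bf Y}_0$ is periodic. If ${\bf Y}_0$ has minimal period $T_\eta$, the base point determines $\eta$ modulo $T_\eta$, so the fiber depends only on the point of $\gamma_0$.

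The main obstacle is only the sign convention in solving for $C$, together with making sure that rank at a single time $\eta_0$ is enough. Both are handled by the invertibility of $\Phi$ and of $e^{B(\eta-\eta_0)}$.
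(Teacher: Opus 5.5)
Your proposal is correct and follows the paper's route. The paper also derives $A(\eta)E(\eta)-E'(\eta)=E(\eta)B$ with $B=\begin{pmatrix}0&k\\0&1\end{pmatrix}$ and reads off invariance; it leaves the passage from full rank at $\eta_0$ to full rank and invariance for all $\eta$ to general linear theory and to its later reduction $V=Ez$, $z'=Bz$. Your solution matrix $E(\eta)e^{B(\eta-\eta_0)}=\Phi(\eta,\eta_0)E(\eta_0)$ makes that step explicit.

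What you take for granted is the identity itself, and that is where the paper's real work lies. It needs two facts: $e_1'=Ae_1$, obtained by differentiating (\ref{eq-balance}), and $Ae_2=e_2'+ke_1+e_2$, obtained from Lemma \ref{lem-identity-QHvf} combined with (\ref{eq-balance}). Include that short computation, and drop the garbled first display in your first step in favor of the choice $C'=BC$.
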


\begin{rem}[Linear independence of fibers]
From the general theory of linear differential systems (e.g. \cite{C2006}, Section 2.1.3), under the assumption in Proposition \ref{prop-balance-inv-subbundle}, the matrix $E(\eta)$ in (\ref{2dim-balance}) has a full rank for all $\eta$.
\end{rem}

Now we can easily calculate characteristic multipliers for $R_{\bf Y}^{(2)}$.
Assuming that $E(\eta)$ has a full rank, any vector $V\in R_{\bf Y}^{(2)}(\eta)$ is written by
\begin{equation*}
V(\eta) = E(\eta)z(\eta) = z_1(\eta)e_1(\eta) + z_2(\eta)e_2(\eta)
\end{equation*}
through $z(\eta) = (z_1(\eta), z_2(\eta))^T \in \mathbb{R}^2$.
Substituting this expression into the variational equation (\ref{var-balance}),
we get a $2$-dimensional ODE for $z$:
\begin{equation*}
z' = \begin{pmatrix}
0 & k \\ 0 & 1
\end{pmatrix}z,
\end{equation*}
and we have
\begin{equation*}
z_1(\eta) = C_1 + kC_2(e^\eta - 1),\quad z_2(\eta) = C_2e^{\eta}.
\end{equation*}

\begin{cor}
\label{cor-ch-balance}
Under the same setting as Proposition \ref{prop-balance-inv-subbundle}, the variational equation (\ref{var-balance}) admits characteristic multipliers $1$ and $e^{T_\eta} > 1$.
\end{cor}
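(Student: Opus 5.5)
The plan is to compute the monodromy of (\ref{var-balance}) restricted to the invariant subbundle $R_{\bf Y}^{(2)}$ and read off its eigenvalues. Under the hypothesis of Proposition \ref{prop-balance-inv-subbundle}, $E(\eta)$ has full rank for all $\eta$ (by the preceding remark). Hence every solution of (\ref{var-balance}) starting in $R_{\bf Y}^{(2)}(0)$ has the form ${\bf V}(\eta)=E(\eta)z(\eta)$, where $z$ solves $z'=Bz$ with
\begin{equation*}
B=\begin{pmatrix} 0 & k\\ 0 & 1\end{pmatrix}.
\end{equation*}
This equation has constant coefficients, so I will use the explicit solution already derived, $z(\eta)=e^{B\eta}z(0)$.

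The key observation is that $E$ is $T_\eta$-periodic. Indeed, $e_1={\bf Y}_0'$ and $e_2=\Lambda_\alpha{\bf Y}_0$ are both $T_\eta$-periodic because ${\bf Y}_0$ is. The time-$T_\eta$ map of (\ref{var-balance}) therefore sends $E(0)z(0)$ to $E(T_\eta)e^{BT_\eta}z(0)=E(0)e^{BT_\eta}z(0)$. In the basis $\{e_1(0),e_2(0)\}$, the restriction of the monodromy matrix to the invariant plane $R_{\bf Y}^{(2)}(0)$ is thus conjugate to $e^{BT_\eta}$. This matrix is upper triangular with diagonal entries $1$ and $e^{T_\eta}$, consistent with $z_1=C_1+kC_2(e^\eta-1)$ and $z_2=C_2e^\eta$.

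Since $R_{\bf Y}^{(2)}(0)$ is invariant under the full monodromy matrix, the characteristic polynomial of the restriction divides that of the full monodromy. Therefore $1$ and $e^{T_\eta}$ are characteristic multipliers of (\ref{var-balance}), and $e^{T_\eta}>1$ because $T_\eta>0$. Equivalently, in the language of Proposition \ref{prop-Floquet}, $E(\eta)$ times the periodic parts of the solutions gives nonzero elements of $\ker_{\rm per}\mathcal{L}(0)$ and $\ker_{\rm per}\mathcal{L}(1)$. The cleanest explicit choices are ${\bf w}=e_1$ for $\lambda=0$, and ${\bf w}(\eta)=e^{-\eta}E(\eta)e^{B\eta}(k,1)^T=k e_1(\eta)+e_2(\eta)$ for $\lambda=1$. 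The latter is periodic, and I would verify directly that $A(ke_1+e_2)-(ke_1+e_2)'=ke_1+e_2$ using $Ae_2=e_2'+ke_1'+e_2$ and $Ae_1=e_1'$.

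The only delicate point is justifying that the reduction ${\bf V}=Ez$ is legitimate, which requires the full rank of $E$ for all $\eta$. This is supplied by the remark following Proposition \ref{prop-balance-inv-subbundle}. For $\lambda=1$ one also needs $ke_1+e_2\neq 0$, and this follows directly from the linear independence of $e_1(\eta)$ and $e_2(\eta)$.
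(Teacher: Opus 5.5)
Your proposal is correct and follows the paper's route: write ${\bf V}=Ez$ on the invariant subbundle $R_{\bf Y}^{(2)}$, reduce to $z'=Bz$ with the upper-triangular constant matrix $B$, and read off the multipliers $1$ and $e^{T_\eta}$. Your use of the $T_\eta$-periodicity of $E$ (so that the monodromy acts on $R_{\bf Y}^{(2)}(0)$ as $e^{BT_\eta}$ in the basis $\{e_1(0),e_2(0)\}$) and of the divisibility of characteristic polynomials only makes explicit what the paper leaves implicit. One small correction: the identity you cite should read $Ae_2=e_2'+ke_1+e_2$, because $k{\bf Y}_0'=ke_1$ (the paper's $ke_1'$ is a typo, and only the corrected form matches $AE-E'=EB$); your check $A(ke_1+e_2)-(ke_1+e_2)'=ke_1+e_2$ works with this corrected identity, whereas with $ke_1'$ it would give $ke_1'+e_2$.
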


\subsubsection{From dynamics at infinity}
Next we shall construct an invariant subbundle for the desingularized vector field $g$.
Let $\gamma_\ast =\{{\bf x}_\ast(\tau)\} \in P_{\mathcal{E}}(g)$ with the period $T_\tau > 0$.
In particular, ${\bf x}_\ast(\tau)$ satisfies
\begin{align}
\label{per-horizon}
\dot {\bf x}_\ast(\tau) &= \tilde f_{\alpha,k}({\bf x}_\ast(\tau)) - G({\bf x}_\ast(\tau))\Lambda_\alpha {\bf x}_\ast(\tau).
\end{align}
Here we consider the variational equation of (\ref{desing-para}) (not (\ref{per-horizon}) !) along with ${\bf x}_\ast(\tau)$:
\begin{equation}
\label{var-desing}
\dot {\bf V} = D_{\bf x}g({\bf x}_\ast(\tau)) {\bf V}.
\end{equation}
The Jacobian matrix of $g$ is
\begin{align*}
D_{\bf x}g({\bf x}) &= (2c-1)p_\alpha({\bf x})^{2c-1}\tilde f({\bf x})\nabla_{\bf x}p_\alpha({\bf x})^T \\
	&\quad + \left( 1 - \frac{2c-1}{2c}(1-p_\alpha({\bf x})^{2c})\right) D_{\bf x}\tilde f({\bf x})
	 - \Lambda_\alpha {\bf x} \nabla_{\bf x}G({\bf x})^T - G({\bf x})\Lambda_\alpha,
\end{align*}
while the matrix attained at ${\bf x}_\ast(\tau)$ is
\begin{align*}
D_{\bf x}g({\bf x}_\ast(\tau)) &= (2c-1)\tilde f({\bf x}_\ast(\tau))\nabla_{\bf x}p_\alpha({\bf x}_\ast(\tau))^T + D_{\bf x}\tilde f({\bf x}_\ast(\tau)) - \Lambda_\alpha {\bf x}_\ast(\tau) \nabla_{\bf x}G({\bf x}_\ast(\tau))^T - G({\bf x}_\ast(\tau))\Lambda_\alpha.
\end{align*}
Letting 
\begin{equation*}
C_\ast(\tau) := G({\bf x}_\ast(\tau)),
\end{equation*} 
$D_{\bf x}g({\bf x}_\ast(\tau))$ is written as
\begin{align}
\notag
\nabla_{\bf x}G({\bf x}_\ast(\tau)) &= \frac{d}{d\tau}\nabla_{\bf x}p_\alpha({\bf x}_\ast(\tau)) + 2cC_\ast(\tau)  \nabla_{\bf x}p_\alpha({\bf x}_\ast(\tau)) + A_g(\tau)^T \nabla_{\bf x}p_\alpha({\bf x}_\ast(\tau)),\\
%%%%
\label{Ag_tau}
A_g(\tau) &= -C_\ast(\tau) \Lambda_\alpha + D_{\bf x}\tilde f({\bf x}_\ast(\tau)).
\end{align}
Letting
\begin{equation*}
{\bf n}_\ast (\tau) := \nabla_{\bf x}p_\alpha({\bf x}_\ast(\tau)),
\end{equation*}
we have
\begin{align*}
D_{\bf x}g({\bf x}_\ast(\tau)) &= (2c-1)\tilde f({\bf x}_\ast(\tau))\nabla_{\bf x}p_\alpha({\bf x}_\ast(\tau))^T + D_{\bf x}\tilde f({\bf x}_\ast(\tau)) - \Lambda_\alpha {\bf x}_\ast(\tau ) \dot {\bf n}_\ast (\tau)^T \\
	&\quad - \Lambda_\alpha {\bf x}_\ast(\tau) \{ 2cC_\ast(\tau)  \nabla_{\bf x}p_\alpha({\bf x}_\ast(\tau)) + A_g(\tau)^T \nabla_{\bf x}p_\alpha({\bf x}_\ast(\tau)) \}^T - G({\bf x}_\ast(\tau))\Lambda_\alpha \\
%%%%%
	&= (2c-1)\tilde f({\bf x}_\ast(\tau))\nabla_{\bf x}p_\alpha({\bf x}_\ast(\tau))^T + D_{\bf x}\tilde f({\bf x}_\ast(\tau)) - \Lambda_\alpha {\bf x}_\ast(\tau ) \dot {\bf n}_\ast (\tau)^T  \\
	&\quad - \Lambda_\alpha {\bf x}_\ast(\tau) \{ 2cC_\ast(\tau) \nabla_{\bf x}p_\alpha({\bf x}_\ast(\tau))^T + \nabla_{\bf x}p_\alpha({\bf x}_\ast(\tau))^T A_g(\tau) \} - G({\bf x}_\ast(\tau))\Lambda_\alpha \\
%%%%%
	&= (2c-1)\tilde f_{\alpha, k}({\bf x}_\ast(\tau))\nabla_{\bf x}p_\alpha({\bf x}_\ast(\tau))^T + D_{\bf x}\tilde f_{\alpha, k}({\bf x}_\ast(\tau)) -  \{ 2cC_\ast(\tau) P_\ast(\tau) + P_\ast(\tau) A_g(\tau) \} - C_\ast(\tau)\Lambda_\alpha\\
%%%%%
	&\quad - \Lambda_\alpha {\bf x}_\ast(\tau )\dot {\bf n}_\ast (\tau)^T,
\end{align*}
where
\begin{equation*}
P_\ast(\tau) = \Lambda_\alpha {\bf x}_\ast(\tau) {\bf n}_\ast (\tau)^T
\end{equation*}
First note that $\varepsilon_1(\tau) := \dot {\bf x}_\ast(\tau)$ satisfies
\begin{equation*}
\dot \varepsilon_1(\tau) = D_{\bf x}g({\bf x}_\ast(\tau))\varepsilon_1(\tau)
\end{equation*}
because ${\bf x}_\ast(\tau)$ is a periodic solution.
Next note that
\begin{equation*}
(k+ \Lambda_\alpha)\tilde f_{\alpha, k}({\bf x}_\ast(\tau)) = D_{\bf x}\tilde f_{\alpha, k}({\bf x}_\ast(\tau))\Lambda_\alpha {\bf x}_\ast(\tau)
\end{equation*}
follows from the quasi-homogeneity of $\tilde f_{\alpha, k}$ and Assumption \ref{ass-diff-QH}.
Letting
\begin{equation*}
\varepsilon_2(\tau) := \Lambda_\alpha {\bf x}_\ast(\tau)
\end{equation*}
as an analogy from the balance law and \cite{asym2}, we calculate $D_{\bf x}g({\bf x}_\ast(\tau)) \varepsilon_2(\tau) - \dot \varepsilon_2(\tau)$: 
\begin{align*}
&D_{\bf x}g({\bf x}_\ast(\tau)) \varepsilon_2(\tau)\\
%%%%
&\quad = (2c-1)\tilde f_{\alpha, k}({\bf x}_\ast(\tau))\nabla_{\bf x}p_\alpha({\bf x}_\ast(\tau))^T \varepsilon_2(\tau) + D_{\bf x}\tilde f_{\alpha, k}({\bf x}_\ast(\tau))\varepsilon_2(\tau) \\
	&\quad \quad -  \{ 2cC_\ast(\tau) P_\ast(\tau)\varepsilon_2(\tau) + P_\ast(\tau) A_g(\tau)\varepsilon_2(\tau) \} - C_\ast(\tau)\Lambda_\alpha \varepsilon_2(\tau) - \varepsilon_2(\tau) \dot {\bf n}_\ast (\tau)^T \varepsilon_2(\tau) \\
%%%%
&\quad = (2c-1)\tilde f_{\alpha, k}({\bf x}_\ast(\tau))\nabla_{\bf x}p_\alpha({\bf x}_\ast(\tau))^T \varepsilon_2(\tau) + A_g(\tau) \varepsilon_2(\tau) -  2cC_\ast(\tau) P_\ast(\tau)\varepsilon_2(\tau) - P_\ast(\tau) A_g(\tau)\varepsilon_2(\tau) \\
	&\quad \quad - \varepsilon_2(\tau) \dot {\bf n}_\ast (\tau)^T \varepsilon_2(\tau).
\end{align*}
By using (\ref{per-horizon}), we have
\begin{align*}
&D_{\bf x}g({\bf x}_\ast(\tau)) \varepsilon_2(\tau)\\
%%%%
&\quad = (2c-1) \left\{ \varepsilon_1(\tau) + C_\ast(\tau)\varepsilon_2(\tau)\right\} \nabla_{\bf x}p_\alpha({\bf x}_\ast(\tau))^T \varepsilon_2(\tau) + (I-P_\ast(\tau)) A_g(\tau) \varepsilon_2(\tau) - 2cC_\ast(\tau) P_\ast(\tau)\varepsilon_2(\tau) \\
	&\quad \quad  - \varepsilon_2(\tau) \dot {\bf n}_\ast (\tau)^T \varepsilon_2(\tau) \\
%%%%
&\quad = (2c-1)\varepsilon_1(\tau)\nabla_{\bf x}p_\alpha({\bf x}_\ast(\tau))^T \varepsilon_2(\tau) + (2c-1) C_\ast(\tau)P_\ast(\tau) \varepsilon_2(\tau) + (I-P_\ast(\tau)) A_g(\tau) \varepsilon_2(\tau) -  2cC_\ast(\tau) P_\ast(\tau)\varepsilon_2(\tau) \\
	&\quad \quad - \varepsilon_2(\tau) \dot {\bf n}_\ast (\tau)^T \varepsilon_2(\tau) \\
%%%%
&\quad = (2c-1)\varepsilon_1(\tau)\nabla_{\bf x}p_\alpha({\bf x}_\ast(\tau))^T \varepsilon_2(\tau) + (I-P_\ast(\tau)) A_g(\tau) \varepsilon_2(\tau) -  C_\ast(\tau) P_\ast(\tau)\varepsilon_2(\tau) - \varepsilon_2(\tau) \dot {\bf n}_\ast (\tau)^T \varepsilon_2(\tau).
\end{align*}
Here note that ${\bf x}_\ast(\tau)\in \mathcal{E}$ implies
\begin{equation*}
{\bf n}_\ast (\tau)^T \varepsilon_2(\tau) \equiv 1,\quad P_\ast(\tau)\varepsilon_2(\tau) = \varepsilon_2(\tau)
\end{equation*}
for any $\tau$, equivalently,
\begin{align*}
D_{\bf x}g({\bf x}_\ast(\tau)) \varepsilon_2(\tau) = (2c-1)\varepsilon_1(\tau) + (I-P_\ast(\tau)) A_g(\tau) \varepsilon_2(\tau) -  C_\ast(\tau) \varepsilon_2(\tau) - \varepsilon_2(\tau)\dot {\bf n}_\ast (\tau)^T \varepsilon_2(\tau).
\end{align*}
Also using Assumption \ref{ass-diff-QH} and the quasi-homogeneity of $f_{\alpha, k}$, we have
\begin{align*}
A_g(\tau) \varepsilon_2(\tau) &= \left\{ -C_\ast(\tau) \Lambda_\alpha + D_{\bf x}\tilde f_{\alpha, k}({\bf x}_\ast(\tau)) \right\} \varepsilon_2(\tau) \\
%%%%
	&= -C_\ast(\tau) \Lambda_\alpha \varepsilon_2(\tau) + (k+\Lambda_\alpha) \tilde f_{\alpha, k}({\bf x}_\ast(\tau)) \\
	&= -C_\ast(\tau) \Lambda_\alpha \varepsilon_2(\tau) + (k+\Lambda_\alpha) \{ \varepsilon_1(\tau) + C_\ast(\tau) \varepsilon_2(\tau) \}  \\
	&= (k + \Lambda_\alpha)\varepsilon_1(\tau) + kC_\ast(\tau)\varepsilon_2(\tau).
\end{align*}
Then 
\begin{align*}
&D_{\bf x}g({\bf x}_\ast(\tau)) \varepsilon_2(\tau) \\
&\quad = (2c-1)\varepsilon_1(\tau) + (I-P_\ast(\tau)) \{ (k + \Lambda_\alpha)\varepsilon_1(\tau) + kC_\ast(\tau)\varepsilon_2(\tau) \}-  C_\ast(\tau) \varepsilon_2(\tau) - \varepsilon_2(\tau) \dot {\bf n}_\ast (\tau)^T \varepsilon_2(\tau) .
\end{align*}
Now $\varepsilon_1(\tau)\equiv \dot {\bf x}_\ast(\tau) \in T_{{\bf x}_\ast(\tau)}\mathcal{E}$ and, from observations in \cite{asym2}, the map $I-P_\ast(\tau)$ is the projection onto $T_{{\bf x}_\ast(\tau)}\mathcal{E}$.
Consequently, we have $(I-P_\ast(\tau))\varepsilon_1(\tau) = \varepsilon_1(\tau)$. 
On the other hand, we also have
\begin{equation*}
(I-P_\ast(\tau)) \varepsilon_2(\tau) = \varepsilon_2(\tau) -\varepsilon_2(\tau) {\bf n}_\ast (\tau)^T \varepsilon_2(\tau) = \varepsilon_2(\tau) - \varepsilon_2(\tau) = 0.
\end{equation*}
Therefore
\begin{align*}
D_{\bf x}g({\bf x}_\ast(\tau)) \varepsilon_2(\tau) &= (2c-1) \varepsilon_1(\tau) + k \varepsilon_1(\tau) + (I-P_\ast(\tau)) \dot \varepsilon_2(\tau) - C_\ast(\tau) \varepsilon_2(\tau) - \varepsilon_2(\tau) \dot {\bf n}_\ast (\tau)^T \varepsilon_2(\tau) 
\end{align*}
and hence
\begin{equation}
\label{Dge_dot_e}
D_{\bf x}g({\bf x}_\ast(\tau)) \varepsilon_2(\tau) - \dot \varepsilon_2(\tau) = (2c-1)\varepsilon_1(\tau) + k \varepsilon_1(\tau) -P_\ast(\tau) \dot \varepsilon_2(\tau) - C_\ast(\tau) \varepsilon_2(\tau) - \varepsilon_2(\tau)\dot {\bf n}_\ast (\tau)^T \varepsilon_2(\tau).
\end{equation}
Next we treat $P_\ast(\tau) \dot \varepsilon_2(\tau)$.
Because
\begin{equation*}
{\bf n}_\ast (\tau)^T \varepsilon_2(\tau)\equiv 1,\quad P_\ast(\tau) \varepsilon_2(\tau) = \varepsilon_2(\tau)
\end{equation*}
holds, differentiating both sides in the second identity in $\tau$, we have
\begin{equation*}
P_\ast(\tau) \dot \varepsilon_2(\tau) + \dot P_\ast(\tau) \varepsilon_2(\tau) = \dot \varepsilon_2(\tau).
\end{equation*}
Because
\begin{align*}
\dot P_\ast(\tau)\varepsilon_2(\tau) &= \frac{d}{d\tau} \left\{ \varepsilon_2(\tau) {\bf n}_\ast (\tau)^T \right\}\varepsilon_2(\tau)\\
	&= \dot \varepsilon_2(\tau) {\bf n}_\ast (\tau)^T \varepsilon_2(\tau) + \varepsilon_2(\tau)\dot {\bf n}_\ast (\tau)^T \varepsilon_2(\tau) \\
	&= \dot \varepsilon_2(\tau) + \varepsilon_2(\tau)\dot {\bf n}_\ast (\tau)^T \varepsilon_2(\tau),
\end{align*}
we have
\begin{align*}
P_\ast(\tau) \dot \varepsilon_2(\tau) &= - \dot P_\ast(\tau) \varepsilon_2(\tau) + \dot \varepsilon_2(\tau) \\
	&= - \dot \varepsilon_2(\tau) - \varepsilon_2(\tau) \dot {\bf n}_\ast(\tau)^T \varepsilon_2(\tau) + \dot \varepsilon_2(\tau)\\
	&= - \varepsilon_2(\tau)\dot {\bf n}_\ast(\tau)^T \varepsilon_2(\tau).
\end{align*}
Substituting this identity into (\ref{Dge_dot_e}), we have
\begin{equation*}
D_{\bf x}g({\bf x}_\ast(\tau)) \varepsilon_2(\tau) - \dot \varepsilon_2(\tau) = (2c-1)\varepsilon_1(\tau) + k \varepsilon_1(\tau) - C_\ast(\tau) \varepsilon_2(\tau).
\end{equation*}
According to an expression of invariant subbundles, we have obtained the following proposition.

\begin{prop}
\label{prop-desing-inv-subbundle}
Let $\gamma_\ast = \{{\bf x}_\ast(\tau)\} \in P_{\mathcal{E}}(g)$ for the desingularized vector field $g$ in (\ref{desing-para}).
Further Assumption \ref{ass-diff-QH} is imposed.
Then
\begin{equation*}
R^{(2)}_{\bf x} := \bigcup_{\tau\in [0, T_\tau]}{\rm span} \left\{\varepsilon_1(\tau), \varepsilon_2(\tau)\right\},\quad \varepsilon_1(\tau) := \dot {\bf x}_\ast(\tau), \quad \varepsilon_2(\tau):= \Lambda_\alpha {\bf x}_\ast(\tau),
\end{equation*}
is a $2$-dimensional invariant subbundle of $T_{\gamma_\ast}\mathbb{R}^n$ associated with (\ref{var-desing}).
\end{prop}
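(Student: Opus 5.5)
The plan is to read off invariance from the identity just derived, following the pattern of Proposition \ref{prop-balance-inv-subbundle}. Set $\mathcal{E}(\tau) := [\varepsilon_1(\tau), \varepsilon_2(\tau)]$. The computation preceding the statement gives $\dot\varepsilon_1 = D_{\bf x}g({\bf x}_\ast(\tau))\varepsilon_1$, since $\varepsilon_1$ is the derivative of a periodic solution. Combined with the final identity for $\varepsilon_2$, this yields
\begin{equation*}
D_{\bf x}g({\bf x}_\ast(\tau))\mathcal{E}(\tau) - \dot{\mathcal{E}}(\tau) = \mathcal{E}(\tau)\begin{pmatrix} 0 & 2c-1+k \\ 0 & -C_\ast(\tau)\end{pmatrix}.
\end{equation*}
This is a $\tau$-dependent but $T_\tau$-periodic coefficient matrix $B(\tau)$.

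From this relation I would show invariance in the usual way. For any $z(\tau)\in\mathbb{R}^2$ solving $\dot z = B(\tau)z$, the function ${\bf V}(\tau) := \mathcal{E}(\tau)z(\tau)$ satisfies
\begin{equation*}
\dot{\bf V} = \dot{\mathcal{E}}z + \mathcal{E}Bz = D_{\bf x}g({\bf x}_\ast){\bf V}.
\end{equation*}
So solutions starting in ${\rm span}\{\varepsilon_1(\tau_0),\varepsilon_2(\tau_0)\}$ stay in the span for all $\tau$, by uniqueness of solutions of (\ref{var-desing}). Periodicity of ${\bf x}_\ast$ makes the fibres $T_\tau$-periodic, so the union over $[0,T_\tau]$ is a well-defined subbundle over $\gamma_\ast$.

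The remaining point, which I expect to be the main obstacle, is that the fibres are genuinely $2$-dimensional, i.e.\ that $\varepsilon_1(\tau)$ and $\varepsilon_2(\tau)$ are linearly independent. I would argue this intrinsically rather than assume it. Since $\gamma_\ast$ is a nontrivial periodic orbit, $\varepsilon_1(\tau)\neq 0$, and $\varepsilon_1(\tau)\in T_{{\bf x}_\ast(\tau)}\mathcal{E}$, so ${\bf n}_\ast(\tau)^T\varepsilon_1(\tau)=0$. On the other hand ${\bf n}_\ast(\tau)^T\varepsilon_2(\tau)=1$. Applying ${\bf n}_\ast^T$ to a relation $a\varepsilon_1+b\varepsilon_2=0$ gives $b=0$, and then $a=0$. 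Hence full rank holds at every $\tau$ directly. Alternatively, it suffices to check full rank at one time and propagate it via the Liouville-type argument cited in the remark after Proposition \ref{prop-balance-inv-subbundle}.

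Finally, I would note where Assumption \ref{ass-diff-QH} enters. It is used precisely to replace $D_{\bf x}\tilde f$ by $D_{\bf x}\tilde f_{\alpha,k}$ on $\mathcal{E}$, which allows the Euler-type identity of Lemma \ref{lem-identity-QHvf} in computing $A_g\varepsilon_2$. The proof is therefore complete once the two displayed relations are recorded.
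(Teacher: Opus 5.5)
Your proposal is correct and follows essentially the paper's route: the paper's proof is the preceding derivation of $D_{\bf x}g({\bf x}_\ast(\tau))\varepsilon_2 - \dot\varepsilon_2 = (2c-1+k)\varepsilon_1 - C_\ast(\tau)\varepsilon_2$, where Assumption \ref{ass-diff-QH} enters through the Euler identity. Invariance is then read off from the $2\times 2$ system $\dot z = B(\tau)z$, and your linear-independence argument via ${\bf n}_\ast(\tau)^T\varepsilon_1(\tau)=0$ and ${\bf n}_\ast(\tau)^T\varepsilon_2(\tau)=1$ is the one in the remark following the proposition (cosmetically, calling the frame matrix $\mathcal{E}(\tau)$ clashes with the horizon $\mathcal{E}$; the paper writes $E_\ast(\tau)$).
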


\begin{rem}[Linear independence of fibers]
Thanks to the invariant structure of the horizon $\mathcal{E}$, the linearly independent nature of vectors in the fiber of $R^{(2)}_{\bf x}$ is always clear. 
Indeed, because $\varepsilon_1(\tau) \equiv \dot {\bf x}_\ast(\tau)$ is a tangent vector of $\gamma_\ast$, it is an element of $T_{{\bf x}_\ast(\tau)}\mathcal{E}$.
On the other hand, the identity
\begin{equation*}
\nabla_{\bf x}p_\alpha({\bf x}_\ast(\tau))^T \varepsilon_2(\tau) = \nabla_{\bf x}p_\alpha({\bf x}_\ast(\tau))^T \Lambda_\alpha {\bf x}_\ast(\tau)\equiv 1
\end{equation*}
implies that two vectors $\varepsilon_1(\tau)$ and $\varepsilon_2(\tau)$ generating the fiber $R^{(2)}_{\bf x}(\tau)$ are always linearly independent.
However, there is no such structure in $R^{(2)}_{\bf Y}$ in general.
Therefore, the linear independence of vectors $e_1(\eta)$ and $e_2(\eta)$ is, even if it is required at a point $\eta_0$, just an {\em assumption} in the present framework.
\end{rem}

Next we shall consider the associated characteristic multipliers.
Let
\begin{equation*}
E_\ast(\tau) := [\varepsilon_1(\tau), \varepsilon_2(\tau)].
\end{equation*}
Then we have
\begin{equation*}
D_{\bf x}g({\bf x}_\ast(\tau))E_\ast(\tau) - E_\ast'(\tau) = E_\ast(\tau)\begin{pmatrix}
0 & (2c-1) + k \\ 0 & -C_\ast(\tau)
\end{pmatrix}.
\end{equation*}
Now any vector $W(\tau)\in R_{\bf x}^{(2)}(\tau)$ is expressed by 
\begin{equation*}
W(\tau) = E_\ast(\tau) z(\tau) = z_1(\tau)\varepsilon_1(\tau) + z_2(\tau)\varepsilon_2(\tau)
\end{equation*}
through a two-dimensional vector $z(\tau) = (z_1(\tau), z_2(\tau))^T$ for each $\tau$.
Substituting this expression into (\ref{var-desing}), we have
\begin{align*}
\frac{dW}{d\tau}(\tau) &= z_1'(\tau)\varepsilon_1(\tau) +  z_1(\tau) \varepsilon_1'(\tau) + z_2'(\tau)\varepsilon_2(\tau) + z_2(\tau)\varepsilon_2' (\tau),\\
\dot \varepsilon_1(\tau) &= D_{\bf x}g({\bf x}_\ast(\tau)) \varepsilon_1(\tau),\\
\dot \varepsilon_2(\tau) &= D_{\bf x}g({\bf x}_\ast(\tau)) \varepsilon_2(\tau) - (2c-1+k)\varepsilon_1(\tau) + C_\ast(\tau) \varepsilon_2(\tau),\\
D_{\bf x}g({\bf x}_\ast(\tau)) w_1(\tau) &= z_1(\tau)D_{\bf x}g({\bf x}_\ast(\tau))\varepsilon_1(\tau),\\
D_{\bf x}g({\bf x}_\ast(\tau)) w_2(\tau) &= z_2(\tau)D_{\bf x}g({\bf x}_\ast(\tau))\varepsilon_2(\tau),
\end{align*}
which yields
\begin{align*}
0 &= \frac{dW}{d\tau}(\tau) - D_{\bf x}g({\bf x}_\ast(\tau)) W(\tau)\\
	&= z_1'(\tau)\varepsilon_1(\tau) + z_2'(\tau)\varepsilon_2(\tau) - \{(2c-1+k)\varepsilon_1(\tau) + C_\ast(\tau) \varepsilon_2(\tau)\}z_2(\tau).
\end{align*}

By an argument similar to that used to obtain Corollary \ref{cor-ch-balance}, we have the following corollary, which shows an explicit calculation of several characteristic multipliers.
\begin{cor}
\label{cor-ch-desing}
Under the same setting as Proposition \ref{prop-desing-inv-subbundle}, the variational equation (\ref{var-desing}) admits characteristic multipliers
\begin{equation}
\label{ch-trans-desing}
1,\quad \exp\left(- \int_0^{T_\tau} G({\bf x}_\ast(\tau)) d\tau\right).
\end{equation}
\end{cor}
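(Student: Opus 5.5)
The plan is to reduce the variational equation (\ref{var-desing}) restricted to the invariant subbundle $R^{(2)}_{\bf x}$ of Proposition \ref{prop-desing-inv-subbundle} to a planar linear system in the coordinates $z=(z_1,z_2)^T$, and to read off its monodromy. Since $\varepsilon_1(\tau)$ and $\varepsilon_2(\tau)$ are linearly independent for every $\tau$ (by the Remark following Proposition \ref{prop-desing-inv-subbundle}), the identity obtained just before the statement,
\begin{equation*}
0 = z_1'\varepsilon_1 + z_2'\varepsilon_2 - \{(2c-1+k)\varepsilon_1 + C_\ast(\tau)\varepsilon_2\}z_2,
\end{equation*}
lets us match coefficients. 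This gives the triangular system
\begin{equation*}
z_1' = (2c-1+k)z_2,\qquad z_2' = C_\ast(\tau) z_2.
\end{equation*}
The coefficient $C_\ast(\tau)=G({\bf x}_\ast(\tau))$ is $T_\tau$-periodic. The sign of the $C_\ast$ term should be double-checked against the relation $D_{\bf x}g E_\ast - E_\ast' = E_\ast\begin{pmatrix}0 & 2c-1+k\\ 0 & -C_\ast\end{pmatrix}$. I expect the correct reading to be $z' = -\begin{pmatrix}0 & 2c-1+k\\ 0 & -C_\ast\end{pmatrix}z$ or its sign-flipped counterpart, whichever is consistent with substituting $W=E_\ast z$ into $\dot W = D_{\bf x}g W$.

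Working this substitution through carefully is the main obstacle. The claimed multiplier $\exp(-\int_0^{T_\tau} G)$ requires $z_2' = -C_\ast z_2$. Indeed, $\dot W = \dot E_\ast z + E_\ast z'$ and $D_{\bf x}gE_\ast z = \dot E_\ast z + E_\ast M z$ with $M=\begin{pmatrix}0 & 2c-1+k\\ 0 & -C_\ast\end{pmatrix}$. Equating these gives $z' = Mz$, so $z_2' = -C_\ast z_2$ and $z_1' = (2c-1+k)z_2$. I will therefore use this version, which also matches the intuition that $\varepsilon_2$ is the direction transversal to the horizon, whose multiplier $\mu<1$ was mentioned in the Remark after Lemma \ref{lem-r}.

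Solving explicitly gives $z_2(\tau) = z_2(0)e^{-\int_0^\tau C_\ast}$ and $z_1(\tau) = z_1(0) + (2c-1+k)\int_0^\tau z_2$. The monodromy matrix of the reduced system over one period $T_\tau$ is then upper triangular with diagonal entries $1$ and $\exp(-\int_0^{T_\tau}G({\bf x}_\ast(\tau))d\tau)$. Because $E_\ast(\tau)$ is $T_\tau$-periodic and of full rank, the restriction of the fundamental matrix of (\ref{var-desing}) to $R^{(2)}_{\bf x}$ is conjugate, via $E_\ast(0)=E_\ast(T_\tau)$, to this monodromy. Its eigenvalues are therefore characteristic multipliers of (\ref{var-desing}).

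To make the last step clean, one could alternatively exhibit the corresponding Floquet solutions and invoke Proposition \ref{prop-Floquet}. The multiplier $1$ comes from $W=\varepsilon_1$. The other comes from a solution $W=E_\ast z$ with $z$ of the form $e^{\lambda\tau}\times$(periodic), where $\lambda = -\frac{1}{T_\tau}\int_0^{T_\tau}G$. Since $z_1$ is obtained by quadrature, the periodic part exists when $\lambda\neq 0$ by adding a suitable constant multiple of the $\varepsilon_1$-solution. When $\lambda=0$ both multipliers equal $1$ and the triangular monodromy argument already suffices.
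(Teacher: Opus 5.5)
Your proof is correct and follows the paper's route: write $W=E_\ast z$ on the invariant subbundle $R^{(2)}_{\bf x}$, obtain the triangular system $z'=Mz$ with $M=\begin{pmatrix}0 & 2c-1+k\\ 0 & -C_\ast(\tau)\end{pmatrix}$, and read off the multipliers $1$ and $\exp\bigl(-\int_0^{T_\tau}G({\bf x}_\ast(\tau))\,d\tau\bigr)$ from its monodromy; your conjugation via $E_\ast(0)=E_\ast(T_\tau)$ makes the final step more explicit than the paper's ``similar argument''. Your sign resolution is the right one: the paper's display just before the corollary has the sign of the $C_\ast(\tau)z_2(\tau)\varepsilon_2(\tau)$ term flipped. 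It should read $+C_\ast z_2\varepsilon_2$, since $\dot\varepsilon_2=D_{\bf x}g\,\varepsilon_2-(2c-1+k)\varepsilon_1+C_\ast\varepsilon_2$, so you should delete your provisional system with $z_2'=C_\ast z_2$.
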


On the other hand, letting ${\bf x}_0 = r^{-\Lambda_\alpha} {\bf Y}_0$ from Section \ref{section-correspondence-1}, we have
\begin{align*}
G({\bf x}_0) &= \sum_{j\in I_\alpha} \frac{x_{0,j}^{2\beta_j - 1}}{\alpha_j} \tilde f_{j; \alpha, k}({\bf x}_0) \\
	&= \sum_{j\in I_\alpha} \frac{(r^{-\alpha_j} Y_{0,j})^{2\beta_j - 1}}{\alpha_j} \tilde f_{j; \alpha, k}(r^{-\Lambda_\alpha} {\bf Y}_0 ) \\
	&= \sum_{j\in I_\alpha} \frac{ (r^{-\alpha_j(2\beta_j - 1)} Y_{0,j}^{2\beta_j - 1}}{\alpha_j} r^{-(k+\alpha_j)} \tilde f_{j; \alpha, k}( {\bf Y}_0 ) \\
	&= r^{-(2c+k)} \sum_{j\in I_\alpha} \frac{  Y_{0,j}^{2\beta_j - 1}}{\alpha_j} \tilde f_{j; \alpha, k}( {\bf Y}_0 ) \\
	&= r^{-(2c+k)} G({\bf Y}_0).
\end{align*}
which yields
\begin{align*}
\int_0^{T_\tau} G({\bf x}_0 (\tau(\eta))) d\tau &= \int_0^{T_\eta} G({\bf x}_0 (\tau(\eta))) r(\eta)^k d\eta \\
	&= \int_0^{T_\eta} r^{-2c} G({\bf Y}_0 (\eta)) d\eta.
\end{align*}
From Corollary \ref{cor-from-balance-to-desing}, the left-hand side becomes positive, and hence the second multiplier in (\ref{ch-trans-desing}) is, when it is constructed from ${\bf Y}_0(\eta)$, always less than $1$.

\subsection{Tangent quotient subbundles}

Next we shall discuss the remaining correspondence of the \lq\lq eigenstructures".
In \cite{asym2} (as well as \cite{M2026_asym_nonaut}) we have introduced an auxiliary matrix $A_g$ connecting the formal and eigenstructures among $D_{\bf x}g({\bf x}_\ast)$ and $A$ associated with stationary solutions (in particular, objects to be considered are independent of $\tau$).
Here we follow this idea, namely connect \lq\lq eigenstructures" between $D_{\bf x}g({\bf x}_\ast(\tau))$ and $A(\eta)$ via $A_g(\tau)$ defined in (\ref{Ag_tau}).

\subsubsection{Operator identity}
First let
\begin{equation*}
P(\tau) \equiv P_\ast(\tau) := \varepsilon_2(\tau){\bf n}_\ast(\tau)^T.
\end{equation*}
Note that
\begin{equation*}
T_{{\bf x}_\ast(\tau)}\mathcal{E} = \ker ({\bf n}_\ast(\tau)^T)
\end{equation*}
holds for each $\tau$ and hence $I-P_\ast(\tau)$ is a (generally non-orthogonal) projection onto the tangent space $T_{{\bf x}_\ast(\tau)}\mathcal{E}$.
In particular, ${\bf n}_\ast(\tau)^T \xi(\tau) = 0$ holds whenever $\xi(\tau)\in T_{{\bf x}_\ast(\tau)}\mathcal{E}$.
Differentiating both sides of the above identity with respect to $\tau$, we have
\begin{equation*}
\dot {\bf n}_\ast(\tau)^T \xi(\tau) + {\bf n}_\ast(\tau)^T \dot \xi(\tau) = 0
\end{equation*}
Let 
\begin{equation*}
H(\tau) := [h_1(\tau), \ldots, h_{n-1}(\tau)]
\end{equation*}
be the matrix representation of base vectors of $T_{{\bf x}_\ast(\tau)}\mathcal{E}$.
Then we have
\begin{align*}
D_{\bf x}g({\bf x}_\ast(\tau))H(\tau)
%%%%%
	&= (2c-1)\tilde f_{\alpha, k}({\bf x}_\ast(\tau)){\bf n}_\ast(\tau)^T H(\tau) \\
	&\quad + D_{\bf x}\tilde f_{\alpha, k}({\bf x}_\ast(\tau))H(\tau) -  \{ 2cC_\ast(\tau) P_\ast(\tau)H(\tau) + P_\ast(\tau) A_g(\tau)H(\tau) \} \\
%%%%%
	&\quad - C_\ast(\tau)\Lambda_\alpha H(\tau) - \varepsilon_2(\tau) \dot {\bf n}_\ast(\tau)^T H(\tau)\\
%%%%%
	&= (I - P_\ast(\tau)) A_g(\tau)H(\tau) - \varepsilon_2(\tau) \dot {\bf n}_\ast(\tau)^T  H(\tau).
\end{align*}
On the other hand, orthogonality ${\bf n}_\ast(\tau)^T H(\tau) = 0$ also implies 
\begin{equation}
\dot {\bf n}_\ast(\tau)^T H(\tau) + {\bf n}_\ast(\tau)^T \dot H(\tau) = 0
\end{equation}
and hence their combination yields
\begin{equation*}
P\dot H(\tau) = \varepsilon_2 (\tau) {\bf n}_\ast(\tau)^T \dot H(\tau) = - \varepsilon_2 (\tau) \dot {\bf n}_\ast(\tau)^T H(\tau).
\end{equation*}
Therefore we obtain
\begin{align*}
D_{\bf x}g({\bf x}_\ast(\tau))H(\tau)
%%%%%
	&= (I - P_\ast(\tau)) A_g(\tau)H(\tau) + P(\tau) \dot H (\tau) 	
\end{align*}
and
\begin{align*}
D_{\bf x}g({\bf x}_\ast(\tau))H(\tau) - \dot H(\tau)
%%%%%
	&= (I - P_\ast(\tau)) A_g(\tau)H(\tau) - (I- P(\tau)) \dot H (\tau) \\
	&= (I - P_\ast(\tau))(A_g(\tau)H(\tau) - \dot H(\tau))
\end{align*}
Simple addition of the diagonal matrix $\lambda I$ for any $\lambda\in \mathbb{C}$ yields the following proposition, which is one of the main tools towards our correspondence.

\begin{prop}
\label{prop-Dg-I-P}
Under Assumption \ref{ass-diff-QH}, the following identity holds for any $\lambda \in \mathbb{C}$ and ${\bf w}: \mathbb{R}\to T_{{\bf x}_\ast(\cdot)}\mathcal{E}$:
\begin{equation}
\label{identity-Dg-Ag}
\{D_{\bf x}g({\bf x}_\ast(\tau)) -\lambda I \}{\bf w}(\tau) - \dot {\bf w}(\tau) = (I - P_\ast(\tau))\{ (A_g(\tau) - \lambda I ){\bf w}(\tau) - \dot {\bf w}(\tau)\}.
\end{equation}
\end{prop}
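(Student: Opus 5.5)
The identity is linear in ${\bf w}$, so I would reduce it to the matrix identity already derived just before the statement. Take a smooth frame $H(\tau) = [h_1(\tau), \ldots, h_{n-1}(\tau)]$ of $T_{{\bf x}_\ast(\tau)}\mathcal{E}$ and write ${\bf w}(\tau) = H(\tau)z(\tau)$ with $z(\tau)\in\mathbb{C}^{n-1}$. Such a frame exists locally along $\gamma_\ast$, and that suffices because the claim is pointwise in $\tau$. Then
\begin{equation*}
\dot{\bf w} = \dot H z + H\dot z .
\end{equation*}
The left-hand side of (\ref{identity-Dg-Ag}) therefore splits into two pieces. The first is $(D_{\bf x}g({\bf x}_\ast)H - \dot H)z$. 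The second is $-\lambda H z - H\dot z$.

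For the first piece I would invoke the relation proved above under Assumption \ref{ass-diff-QH}:
\begin{equation*}
D_{\bf x}g({\bf x}_\ast)H - \dot H = (I-P_\ast)(A_g H - \dot H).
\end{equation*}
Recall how it arose. We used that $D_{\bf x}\tilde f = D_{\bf x}\tilde f_{\alpha,k}$ on $\mathcal{E}$, that ${\bf n}_\ast^T H = 0$, and the differentiated relation $P_\ast\dot H = -\varepsilon_2\dot{\bf n}_\ast^T H$.

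For the second piece, note that $H$ and $H\dot z$ take values in $T_{{\bf x}_\ast}\mathcal{E}$. Since $I-P_\ast$ is a projection onto that tangent space, we have
\begin{equation*}
(I-P_\ast)(\lambda H z + H\dot z) = \lambda Hz + H\dot z .
\end{equation*}
Hence the second piece equals $(I-P_\ast)(-\lambda Hz - H\dot z)$.

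Adding the two pieces gives $(I-P_\ast)\{A_g H z - \dot H z - \lambda H z - H\dot z\}$. This is exactly $(I-P_\ast)\{(A_g-\lambda I){\bf w} - \dot{\bf w}\}$, which is the claim.

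The only delicate point is the smooth local frame $H$ along the periodic orbit, since a global frame may fail to exist if the bundle is non-orientable. I would avoid the issue entirely with a frame-free version of the same argument. The key fact is that ${\bf n}_\ast^T{\bf w}\equiv 0$ gives ${\bf n}_\ast^T\dot{\bf w} = -\dot{\bf n}_\ast^T{\bf w}$, so
\begin{equation*}
P_\ast\dot{\bf w} = -\varepsilon_2\dot{\bf n}_\ast^T{\bf w}.
\end{equation*}
Substitute this into the expansion of $D_{\bf x}g({\bf x}_\ast){\bf w}$ obtained before the statement, with $H$ replaced by ${\bf w}$. The terms carrying ${\bf n}_\ast^T{\bf w}=0$ vanish, leaving
\begin{equation*}
D_{\bf x}g({\bf x}_\ast){\bf w} = (I-P_\ast)A_g{\bf w} + P_\ast\dot{\bf w}.
\end{equation*}
Subtracting $\dot{\bf w}$ and using $(I-P_\ast){\bf w}={\bf w}$ for the $\lambda$-term then finishes the proof directly.
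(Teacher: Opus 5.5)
Your proposal is correct and follows essentially the same route as the paper. The paper computes $D_{\bf x}g({\bf x}_\ast)H$ for a tangent frame $H$, uses ${\bf n}_\ast^T H = 0$ and its $\tau$-derivative to get $P_\ast \dot H = -\varepsilon_2 \dot{\bf n}_\ast^T H$ and hence $D_{\bf x}g({\bf x}_\ast)H - \dot H = (I-P_\ast)(A_g H - \dot H)$, and then adds $\lambda I$. Your frame-free version is the same computation with $H$ replaced by ${\bf w}$, and your frame version makes explicit the step the paper compresses into ``simple addition of $\lambda I$'', namely absorbing $\lambda{\bf w}$ and $H\dot z$ via $(I-P_\ast)\xi=\xi$ for $\xi\in T_{{\bf x}_\ast}\mathcal{E}$ (the frame-existence worry is moot anyway, since the pulled-back bundle over $\tau\in\mathbb{R}$ is trivial).
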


Note that the mapping ${\bf w}: \mathbb{R}\to T_{{\bf x}_\ast(\cdot)}\mathcal{E}$ mentioned in the above proposition is just a section on the tangent bundle $T\mathcal{E}$, namely ${\bf w}\in \Gamma(T\mathcal{E})$.
The identity (\ref{identity-Dg-Ag}) is then written as an identity of differential operators on $\Gamma(T\mathcal{E})$.
Letting
\begin{align*}
\mathcal{L}_g(\lambda) {\bf w} &:= \{D_{\bf x}g({\bf x}_\ast(\cdot)) -\lambda I \}{\bf w} - \frac{d{\bf w}}{d\tau},\\
\mathcal{L}_{A,g}(\lambda) {\bf w} &:= (A_g(\cdot) - \lambda I ){\bf w} - \frac{d{\bf w}}{d\tau},\\
\mathcal{L}_{A,g,\mathcal{E}}(\lambda) {\bf w} &:= (I - P_\ast(\cdot)) \mathcal{L}_{A,g}(\lambda) {\bf w},
\end{align*}
(\ref{identity-Dg-Ag}) implies the identity $\mathcal{L}_g(\lambda) = \mathcal{L}_{A,g,\mathcal{E}}(\lambda)$ on $\Gamma(T\mathcal{E})$.
Summarizing the above arguments, we obtain the following result.

\begin{cor}
Under Assumption \ref{ass-diff-QH}, we have the identity of differential operators on $\Gamma(T\mathcal{E})$:
\begin{equation*}
\mathcal{L}_g(\lambda)^N = \mathcal{L}_{A,g,\mathcal{E}}(\lambda)^N
\end{equation*}
for any $N\in \mathbb{Z}_{\geq 0}$ and $\lambda \in \mathbb{C}$.
\end{cor}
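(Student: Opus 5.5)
The corollary follows from Proposition \ref{prop-Dg-I-P} by induction on $N$. The case $N=0$ is trivial, since both sides are the identity on $\Gamma(T\mathcal{E})$. The case $N=1$ is exactly identity (\ref{identity-Dg-Ag}), read as $\mathcal{L}_g(\lambda){\bf w} = \mathcal{L}_{A,g,\mathcal{E}}(\lambda){\bf w}$ for every ${\bf w}\in\Gamma(T\mathcal{E})$.

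To go from $N$ to $N+1$, note that powers of an operator only make sense once both operators map $\Gamma(T\mathcal{E})$ into itself. So the first step is this invariance.
\begin{itemize}
\item For $\mathcal{L}_{A,g,\mathcal{E}}(\lambda)$ it is immediate. Its output has the form $(I-P_\ast(\tau))(\cdots)$, and $I-P_\ast(\tau)$ is the projection onto $T_{{\bf x}_\ast(\tau)}\mathcal{E}=\ker({\bf n}_\ast(\tau)^T)$. Indeed, ${\bf n}_\ast^T(I-P_\ast) = {\bf n}_\ast^T - ({\bf n}_\ast^T\varepsilon_2){\bf n}_\ast^T = 0$, because ${\bf n}_\ast^T\varepsilon_2\equiv 1$ on $\mathcal{E}$.
\item For $\mathcal{L}_g(\lambda)$, invariance then follows from the $N=1$ identity itself, since $\mathcal{L}_g(\lambda){\bf w}$ coincides with a vector in the range of $I-P_\ast$. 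One could also check it directly from the invariance of the horizon (Remark \ref{rem-invariance}), which makes the variational flow of (\ref{var-desing}) preserve $T\mathcal{E}$, but the identity route is shorter.
\end{itemize}
One regularity point needs care: applying the operators $N$ times requires ${\bf w}$ to be $C^N$. I would either restrict to $\Gamma^N(T\mathcal{E})$ (sections of class $C^N$) or interpret the operators on the appropriate domains. The coefficients $D_{\bf x}g({\bf x}_\ast(\tau))$, $A_g$ and $P_\ast$ are as smooth as the orbit allows, so this is bookkeeping.

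For the induction step, assume $\mathcal{L}_g(\lambda)^N=\mathcal{L}_{A,g,\mathcal{E}}(\lambda)^N$ on $\Gamma(T\mathcal{E})$ and take ${\bf w}\in\Gamma(T\mathcal{E})$. Set ${\bf u}:=\mathcal{L}_g(\lambda)^N{\bf w}$. By invariance, ${\bf u}\in\Gamma(T\mathcal{E})$, and by the induction hypothesis ${\bf u}=\mathcal{L}_{A,g,\mathcal{E}}(\lambda)^N{\bf w}$. Applying the $N=1$ identity to the section ${\bf u}$ gives
\[
\mathcal{L}_g(\lambda)^{N+1}{\bf w}=\mathcal{L}_g(\lambda){\bf u}=\mathcal{L}_{A,g,\mathcal{E}}(\lambda){\bf u}=\mathcal{L}_{A,g,\mathcal{E}}(\lambda)^{N+1}{\bf w},
\]
which closes the induction.

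The only real content is the invariance of $\Gamma(T\mathcal{E})$ under both operators. Without it, the iterates on the right-hand side would not stay in the domain where Proposition \ref{prop-Dg-I-P} applies. Since the projection property of $I-P_\ast$ settles this, I expect no serious obstacle beyond stating the regularity class of the sections precisely.
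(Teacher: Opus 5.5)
Your proof is correct and follows the paper's route: the paper obtains the corollary directly from the operator identity of Proposition \ref{prop-Dg-I-P} on $\Gamma(T\mathcal{E})$, and you iterate that identity by induction on $N$. Your explicit check that both operators map $\Gamma(T\mathcal{E})$ into itself, via ${\bf n}_\ast^T(I-P_\ast)=0$, supplies the step the paper leaves implicit when it passes from $N=1$ to general $N$.
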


\subsubsection{Quotient bundles}

Next, we shall consider several subbundles of the tangent bundle $T\mathcal{E}$.
A basic property of periodic orbits indicates that the tangent vector $\varepsilon_1(\tau)\equiv \dot {\bf x}_\ast (\tau)$ generates the $1$-dimensional invariant subbundle $T\gamma_\ast$ of $T\mathcal{E}$ associated with (\ref{var-desing}).
On the other hand, the subbundle $T\gamma_\ast$ is {\em also} an invariant subbundle of $R^{(2)}_{\bf x}$ constructed in Proposition \ref{prop-desing-inv-subbundle}.
From the transversality of the remaining (linearly independent) direction of $R^{(2)}_{\bf x}$ with $\mathcal{E}$, we have
\begin{equation*}
T\mathcal{E} \cap R^{(2)}_{\bf x} = T\gamma_\ast.
\end{equation*}
%%%%%%
Then the {\em quotient bundle}
\begin{equation*}
\mathcal{Q}^{(2)}_{\bf x} \equiv T\mathcal{E} / T\gamma_\ast
\end{equation*}
can be defined, which will be the basis of the treatment of \lq\lq remaining linearized structures".
\par
Remember that the characteristic multipliers of $\gamma_\ast$ have the form
\begin{equation*}
\mu = e^{\lambda T_\tau},
\end{equation*}
while all solutions to the variational equation are described by the linear combination of functions of the form
\begin{equation*}
{\bf w}(\tau) = e^{\lambda \tau} \tilde {\bf w}(\tau),\quad \tilde {\bf w}(\tau + T_\tau) = \tilde {\bf w}(\tau).
\end{equation*}
Here the quantity $\lambda \in \mathbb{C}$ is known as a {\em Floquet exponent}.
Through the vector space of {\em periodic sections}
\begin{equation*}
\Gamma_{\rm per}(T\mathcal{E}) = \{{\bf w}\in \Gamma (T\mathcal{E}) \mid {\bf w}(\tau + T_\tau) = {\bf w}(\tau)\},
\end{equation*}
solution structure of the variational equation (\ref{var-desing}) can be completely described.

\subsection{Scaling of operators}

Before studying the nature of quotient bundles mentioned above,
consider the following differential operator associated with $A(\eta)$ as a counterpart of $\mathcal{L}_{A,g}(\lambda)$;
\begin{equation*}
\mathcal{L}_A(\lambda) {\bf v} := (A(\cdot) - \lambda I ){\bf v} - \frac{d{\bf v}}{d\eta}.
\end{equation*}
Correspondence of time variables $\tau$ and $\eta$ is provided by (\ref{time-equiv-1}) or (\ref{time-equiv-2}).
Here compare $A(\eta)$ with $A_g(\tau(\eta))$
Because $r(\tau(\eta))^{-\Lambda_\alpha}{\bf Y}_0(\eta) = {\bf x}_\ast(\tau(\eta))$ in the $\tau$-time-scale, we have
\begin{equation*}
r(\tau)^{-(kI+\Lambda_\alpha)} D_{\bf Y}f_{\alpha, k}({\bf Y}_0(\eta(\tau))) r(\tau)^{\Lambda_\alpha} = D_{\bf x}\tilde f_{\alpha, k}({\bf x}_\ast(\tau)) 
\end{equation*}
based on arguments in the stationary case \cite{asym2}.
On the other hand, we have obtained
\begin{equation*}
C_\ast(\tau) \equiv G({\bf x}_\ast(\tau)) = \frac{1}{kr(\tau)^k}\left\{ \frac{d(r^k)}{d\tau} + 1\right\}
\end{equation*}
in (\ref{X-linear}).
Using the commutativity of diagonal matrices, we have
\begin{align*}
A_g(\tau) &= -C_\ast(\tau)\Lambda_\alpha + D_{\bf x}\tilde f_{\alpha, k}({\bf x}_\ast(\tau))\\
	&= - \frac{1}{kr(\tau)^k} \left\{ \frac{d(r^k)}{d\tau} + 1 \right\}\Lambda_\alpha + r(\tau)^{-(kI+\Lambda_\alpha)} D_{\bf Y}f_{\alpha, k}({\bf Y}_0(\eta(\tau))) r(\tau)^{\Lambda_\alpha} \\
%%%%
	&= r(\tau)^{-k} \left[  - \frac{1}{k} \left\{ \frac{d(r^k)}{d\tau} + 1 \right\}\Lambda_\alpha + r(\tau)^{-\Lambda_\alpha} D_{\bf Y}f_{\alpha, k}({\bf Y}_0(\eta(\tau))) r(\tau)^{\Lambda_\alpha}\right] \\
%%%%
	&= r(\tau)^{-(kI+\Lambda_\alpha)} \left[  -r^{k-1} \frac{dr}{d\tau} \Lambda_\alpha  - \frac{1}{k} \Lambda_\alpha + D_{\bf Y}f_{\alpha, k}({\bf Y}_0(\eta(\tau))) \right] r(\tau)^{\Lambda_\alpha} \\
%%%%
	&= r(\tau)^{-(kI+\Lambda_\alpha)} A(\eta(\tau)) r(\tau)^{\Lambda_\alpha} -
	r(\tau)^{-1} \frac{dr}{d\tau} \Lambda_\alpha.
\end{align*}
As a result, for any ${\bf w} = {\bf w}(\tau)$, we have 
\begin{align*}
\mathcal{L}_{A,g}(\lambda){\bf w}(\tau) &\equiv (A_g(\tau) - \lambda I ){\bf w}(\tau) - \frac{d{\bf w}}{d\tau}(\tau)\\
%%%
	&=  A_g(\tau){\bf w}(\tau) - \lambda {\bf w}(\tau) - \frac{d{\bf w}}{d\eta}(\eta)\frac{d\eta}{d\tau}\\
%%%
	&=  r(\tau)^{-(kI+\Lambda_\alpha)} A(\eta(\tau)) r(\tau)^{\Lambda_\alpha}{\bf w}(\eta(\tau)) -
	r(\tau)^{-1} \frac{dr}{d\tau} \Lambda_\alpha {\bf w}(\eta(\tau)) - \lambda {\bf w}(\eta(\tau)) - \frac{d{\bf w}}{d\eta}(\eta(\tau))\frac{d\eta}{d\tau}.
\end{align*}
Moreover, using (\ref{time-equiv-2}),
\begin{align*}
\mathcal{L}_{A,g}(\lambda){\bf w}(\tau) 
	&=  r(\tau)^{-(kI+\Lambda_\alpha)} A(\eta(\tau)) r(\tau)^{\Lambda_\alpha}{\bf w}(\eta(\tau)) -
	r(\tau)^{-1} \frac{dr}{d\tau} \Lambda_\alpha {\bf w}(\eta(\tau)) - \lambda {\bf w}(\eta(\tau)) - r(\tau)^{-k}\frac{d{\bf w}}{d\eta}(\eta).
\end{align*}
Now let
\begin{equation*}
{\bf W}(\tau) := r(\tau)^{\Lambda_\alpha}{\bf w}(\tau).
\end{equation*}
Then
\begin{align*}
\frac{d{\bf w}}{d\tau} &= \frac{d}{d\tau} \left\{ r(\tau)^{-\Lambda_\alpha}{\bf W}(\tau)\right\}\\
	&= -r(\tau)^{-1} \Lambda_\alpha r(\tau)^{-\Lambda_\alpha} \frac{dr}{d\tau} {\bf W}(\tau) + r(\tau)^{-\Lambda_\alpha} \frac{d{\bf W}}{d\tau}(\tau),\\
\frac{d{\bf w}}{d\eta} &= \frac{d{\bf w}}{d\tau}\frac{d\tau}{d\eta}\\
	&= r(\tau)^k \frac{d{\bf w}}{d\tau}\\
%%%%%
	&=  r(\tau)^k\left\{ -r(\tau)^{-1} \Lambda_\alpha r(\tau)^{-\Lambda_\alpha} \frac{dr}{d\tau} {\bf W}(\tau) + r(\tau)^{-\Lambda_\alpha} \frac{d{\bf W}}{d\tau}(\tau) \right\}\\
	&= r(\tau)^k r(\tau)^{-\Lambda_\alpha} \left\{ -r(\tau)^{-1} \frac{dr}{d\tau} \Lambda_\alpha {\bf W}(\tau) + \frac{d{\bf W}}{d\eta}\frac{d\eta}{d\tau} \right\}\\
%%%%%
	&= r(\tau)^{-\Lambda_\alpha} \left\{ -r(\tau)^{-1} r(\tau)^k \frac{dr}{d\tau} \Lambda_\alpha {\bf W}(\tau) + \frac{d{\bf W}}{d\eta} \right\}.
\end{align*}
Substituting them into $\mathcal{L}_{A,g}(\lambda){\bf w}$, we have
\begin{align*}
\mathcal{L}_{A,g}(\lambda){\bf w}(\tau) 
%%%%%
	&=  r(\tau)^{-(kI+\Lambda_\alpha)} A(\eta(\tau)) r(\tau)^{\Lambda_\alpha}{\bf w}(\eta(\tau)) -
	r(\tau)^{-1} \frac{dr}{d\tau} \Lambda_\alpha {\bf w}(\eta(\tau)) - \lambda {\bf w}(\eta(\tau)) \\
	&\quad - r(\tau)^{-k} \left[ r(\tau)^{-\Lambda_\alpha} \left\{ -r(\tau)^{-1} r(\tau)^k \frac{dr}{d\tau} \Lambda_\alpha {\bf W}(\tau) + \frac{d{\bf W}}{d\eta} \right\} \right] \\
%%%%%
	&=  r(\tau)^{-(kI+\Lambda_\alpha)} A(\eta(\tau)) r(\tau)^{\Lambda_\alpha}{\bf w}(\eta(\tau)) -
	r(\tau)^{-1} \frac{dr}{d\tau} \Lambda_\alpha {\bf w}(\eta(\tau)) - \lambda {\bf w}(\eta(\tau)) \\
	&\quad + r(\tau)^{-\Lambda_\alpha} \left\{ r(\tau)^{-1} \frac{dr}{d\tau} \Lambda_\alpha {\bf W}(\tau) - r(\tau)^{-k} \frac{d{\bf W}}{d\eta} \right\}\\
%%%%%
	&=  r(\tau)^{-(kI+\Lambda_\alpha)} A(\eta(\tau)) r(\tau)^{\Lambda_\alpha}{\bf w}(\eta(\tau)) - \lambda {\bf w}(\eta(\tau)) - r(\tau)^{-(kI+\Lambda_\alpha)} \frac{d{\bf W}}{d\eta} \\
%%%%%
	&=  r(\tau)^{-(kI+\Lambda_\alpha)} \left\{ A(\eta(\tau)) {\bf W}(\eta) - \tilde \lambda {\bf W}(\eta) - \frac{d{\bf W}}{d\eta}\right\},
\end{align*}
where
\begin{equation*}
\tilde \lambda := r(\eta(\tau))^k \lambda.
\end{equation*}
Next we shall describe the above identity as an {\em operator equation}.
Let
\begin{equation*}
\mathcal{L}_{A}^{(2)}(\lambda){\bf W}(\eta) := (A(\eta) - r(\tau(\eta))^k \lambda I ){\bf W}(\eta) - \frac{d{\bf W}}{d\eta}(\eta).
\end{equation*}
%とする. $\mathcal{L}_{A}(\lambda)$と多少紛らわしいが(どこかで記号の変更を試みる), こちらでは係数$\lambda$に$r(\tau(\eta))^k$をかけていることを陽に表している. 
%構成としては$\mathcal{L}_{A}(\tilde \lambda)$そのものであるが, 念のための処置ということで. 
%さて, これを使うと作用素の関係式として
Then
\begin{equation}
\label{similar-A-Ag}
\mathcal{L}_{A,g}(\lambda) = r(\tau(\cdot))^{-(kI+\Lambda_\alpha)}\mathcal{L}_{A}^{(2)}(\lambda) r(\tau(\cdot))^{\Lambda_\alpha},
\end{equation}
each of which has a period $T_\tau$ and $T_\eta$, respectively.
%を得る. 各々, 周期$T_\tau$, $T_\eta$を持つ作用素である. 
%$T_\tau$, $T_\eta$は時間スケール変換(\ref{time-equiv-1})と(\ref{time-equiv-2})により決まるもので, 一致するとは限らない. 

\subsection{Characteristic multipliers}

Here the correspondence of characteristic multipliers is considered through the identity (\ref{similar-A-Ag}) as well as Proposition \ref{prop-Floquet}.
Suppose that ${\bf w}\in \ker_{\rm per}\mathcal{L}_{A,g}(\lambda)$ for some $\lambda\in \mathbb{C}$, in which case ${\bf w}(\tau + T_\tau) = {\bf w}(\tau)$.
Then we define
\begin{equation}
\label{w-v}
{\bf v}(\tau) := e^{\lambda \tau} {\bf w}(\tau).
\end{equation}
Direct calculations yield that ${\bf v} = {\bf v}(\tau)$ satisfies 
\begin{equation*}
\frac{d{\bf v}}{d\tau} = A_g(\tau){\bf v},
\end{equation*}
consequently ${\bf v}(\tau + T_\tau) = e^{\lambda T_\tau} {\bf v}(\tau)$. 
In other words, one characteristic multiplier is determined as
\begin{equation*}
\mu = e^{\lambda T_\tau}.
\end{equation*}
Next, based on ${\bf w}$ instead of ${\bf v}$, consider
\begin{equation*}
{\bf W}(\eta) := r(\tau(\eta))^{\Lambda_\alpha} {\bf w}(\tau(\eta)).
\end{equation*}
Note that all quantities appeared here are $T_\tau$-periodic in $\tau$-time-scale, while are $T_\eta$-periodic in $\eta$-time-scale.
Now the correspondence (\ref{similar-A-Ag}) implies ${\bf W} \in \ker_{\rm per}\mathcal{L}_{A}^{(2)}(\lambda)$, namely
\begin{equation*}
\frac{d{\bf W}}{d\eta} = \{A(\eta) - r(\tau(\eta))^k \lambda I \} {\bf W}.
\end{equation*}
Now let
\begin{equation}
\label{W-V}
\hat {\bf V}(\eta) := e^{\lambda \tau(\eta)}{\bf W}(\eta)
\end{equation}
Then we can construct a solution in $\eta$-time-scale as follows:
\begin{align*}
\frac{d\hat {\bf V}}{d\eta} &= \lambda \frac{d\tau}{d\eta}e^{\lambda \tau(\eta)}{\bf W} + e^{\lambda \tau(\eta)}\frac{d{\bf W}}{d\eta} \\
	&=  \lambda  r(\tau(\eta))^k e^{\lambda \tau(\eta)}{\bf W} + e^{\lambda \tau(\eta)} \{A(\eta) - r(\tau(\eta))^k \lambda I \} {\bf W}\\
	&= e^{\lambda \tau(\eta)} \left[ \lambda  r(\tau(\eta))^k {\bf W} +  \{A(\eta) - r(\tau(\eta))^k \lambda I \} {\bf W} \right] \\
	&= A(\eta) \hat {\bf V}.
\end{align*}
Furthermore, we have
\begin{equation*}
\hat {\bf V}(\eta + T_\eta) = e^{\lambda \tau(\eta + T_\eta)} {\bf W}(\eta + T_\eta) = e^{\lambda T_\tau} e^{\lambda \tau(\eta)} {\bf W}(\eta + T_\eta) = e^{\lambda T_\tau} e^{\lambda \tau(\eta)} {\bf W}(\eta) = e^{\lambda T_\tau} \hat {\bf V}(\eta)
\end{equation*}
Namely, ${\bf v}(\tau)$ in (\ref{w-v}) and $\hat V(\eta)$ in (\ref{W-V}) have {\em identical} multipliers.
%\par

\begin{prop}
Let $M_A$ and $M_{A,g}$ be the monodromy matrices of $A(\eta)$ with the period $T_\eta$ and of $A_g(\tau)$ with the period $T_\tau$, respectively.
Then we have 
\begin{equation*}
M_A = r(0)^{\Lambda_\alpha} M_{A,g} r(0)^{-\Lambda_\alpha},
\end{equation*}
namely two monodromy matrices are similar.
\end{prop}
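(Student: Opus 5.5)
The plan is to build the claimed conjugacy directly at the level of fundamental matrices, lifting the vector-wise computation just carried out for $\hat{\bf V}$ to a matrix identity. Let $\Phi_{A,g}(\tau)$ be the principal fundamental matrix of $d{\bf v}/d\tau = A_g(\tau){\bf v}$ with $\Phi_{A,g}(0)=I$, so that $M_{A,g}=\Phi_{A,g}(T_\tau)$. Fix the phases so that $\tau(0)=0$ and $\eta(0)=0$. Then $\tau(\eta+T_\eta)=\tau(\eta)+T_\tau$ by (\ref{period}), and $r$ is $T_\tau$-periodic. The candidate fundamental matrix for the balance-law variational equation (\ref{var-balance}) is
\begin{equation*}
\Psi(\eta) := r(\tau(\eta))^{\Lambda_\alpha}\,\Phi_{A,g}(\tau(\eta))\, r(0)^{-\Lambda_\alpha}.
\end{equation*}

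The first step is to check that $d\Psi/d\eta = A(\eta)\Psi$. I will not redo the operator computation in general. Instead, I will reuse the identity $A_g(\tau) = r^{-(kI+\Lambda_\alpha)}A(\eta(\tau))r^{\Lambda_\alpha} - r^{-1}\dot r\,\Lambda_\alpha$ derived above, together with $d\tau/d\eta = r^k$. This amounts to the case $\lambda=0$ of (\ref{similar-A-Ag}), applied columnwise to $\Phi_{A,g}$. Differentiating $r^{\Lambda_\alpha}\Phi_{A,g}$ in $\eta$ gives
\begin{equation*}
r^k\left(r^{-1}\dot r\,\Lambda_\alpha r^{\Lambda_\alpha}\Phi_{A,g} + r^{\Lambda_\alpha}A_g\Phi_{A,g}\right).
\end{equation*}
Substituting the identity for $A_g$, the two $\dot r$ terms cancel and what remains is $A(\eta)\,r^{\Lambda_\alpha}\Phi_{A,g}$. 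Note also that $\Psi(0)=r(0)^{\Lambda_\alpha}I\,r(0)^{-\Lambda_\alpha}=I$, so $\Psi$ is the principal fundamental matrix of (\ref{var-balance}).

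The second step evaluates $\Psi$ at $\eta=T_\eta$. Since $\tau(T_\eta)=T_\tau$ and $r(T_\tau)=r(0)$, we get
\begin{equation*}
M_A=\Psi(T_\eta)=r(0)^{\Lambda_\alpha}\Phi_{A,g}(T_\tau)\,r(0)^{-\Lambda_\alpha}=r(0)^{\Lambda_\alpha}M_{A,g}\,r(0)^{-\Lambda_\alpha}.
\end{equation*}
This is the claim. As a consistency check, it reproduces the multiplier correspondence $\hat{\bf V}(\eta+T_\eta)=e^{\lambda T_\tau}\hat{\bf V}(\eta)$ established just before the statement.

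The main obstacle is bookkeeping rather than analysis. The periods must match, which follows from $\tau(\eta+T_\eta)-\tau(\eta)=\int_0^{T_\eta}r^k\,d\eta=T_\tau$, using the periodicity of $r(\tau(\eta))=p_\alpha({\bf Y}_0(\eta))$. One must also be consistent about which $r$ is used: this is the $r$ of Theorem \ref{thm-correspondence-C0}, with ${\bf x}_\ast = r^{-\Lambda_\alpha}{\bf Y}_0$, which Corollary \ref{cor-from-balance-to-desing} makes unambiguous. Finally, the statement implicitly takes base points $\eta=0$ and $\tau=0$ that correspond to each other; with a different base point $\tau_0$ the conjugating matrix would be $r(\tau_0)^{\Lambda_\alpha}$, and I will state this choice of normalization explicitly.
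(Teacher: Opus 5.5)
Your proposal is correct and follows essentially the same route as the paper. The paper writes the relation between principal fundamental matrices as $\Phi_{A,g}(\tau)=r(\tau)^{-\Lambda_\alpha}\Phi_A(\eta(\tau))\,r(0)^{\Lambda_\alpha}$ (the inverse form of your $\Psi$) and evaluates it at one period using $r(T_\tau)=r(0)$ and $\eta(T_\tau)=T_\eta$; your check that $\Psi'=A(\eta)\Psi$, via the identity for $A_g$, together with your period bookkeeping, supplies details the paper leaves implicit.
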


\begin{proof}
Let $\Phi_A(\eta)$ and $\Phi_{A,g}(\tau)$ be the fundamental matrices normalized by $\Phi_A(0)=\Phi_{A,g}(0)=I$. 
The transformation
\begin{equation*}
{\bf v}(\tau)=r(\tau)^{-\Lambda_\alpha}V(\eta(\tau))
\end{equation*}
yields
\begin{equation*}
\Phi_{A,g}(\tau)
=
r(\tau)^{-\Lambda_\alpha}
\Phi_A(\eta(\tau))
r(0)^{\Lambda_\alpha}.
\end{equation*}
Because $r(T_\tau)=r(0)$ and $\eta(T_\tau)=T_\eta$, we have
\begin{equation*}
M_{A,g}
=
r(0)^{-\Lambda_\alpha}M_A r(0)^{\Lambda_\alpha},
\end{equation*}
which proves the assertion.
%We have
%\begin{align*}
%M_{A,g}{\bf v}(\tau) &= \mu {\bf v}(\tau) = \mu e^{\lambda \tau}{\bf w}(\tau) = \mu e^{\lambda \tau} r(\tau(\eta))^{-\Lambda_\alpha}{\bf W}(\tau(\eta))	\\
%	&= \mu e^{\lambda \tau} r(\tau(\eta))^{-\Lambda_\alpha}e^{-\lambda \tau(\eta)}\hat {\bf V}(\eta) =  \mu  r(\tau(\eta))^{-\Lambda_\alpha}\hat {\bf V}(\eta) =  r(\tau(\eta))^{-\Lambda_\alpha} M_A \hat {\bf V}(\eta),
%\end{align*}
%while
%\begin{equation*}
%{\bf v}(\tau(\eta)) = e^{\lambda \tau(\eta)} {\bf w}(\tau(\eta)) = e^{\lambda \tau(\eta)}  r(\tau(\eta))^{-\Lambda_\alpha}{\bf W}(\eta) = e^{\lambda \tau(\eta)}  r(\tau(\eta))^{-\Lambda_\alpha} e^{-\lambda \tau(\eta)} \hat {\bf V}(\eta) = r(\tau(\eta))^{-\Lambda_\alpha} \hat {\bf V}(\eta).
%\end{equation*}
%Therefore we have
%\begin{equation*}
%{\bf v}(\tau(\eta)) = r(\tau(\eta))^{-\Lambda_\alpha} \hat {\bf V}(\eta)
%\end{equation*}
%and
%\begin{align*}
%M_{A,g}r(\tau(\eta))^{-\Lambda_\alpha} \hat {\bf V}(\eta) =  r(\tau(\eta))^{-\Lambda_\alpha} M_A \hat {\bf V}(\eta).
%\end{align*}
%As a summary, we have
%\begin{align*}
% r(\tau(\eta))^{\Lambda_\alpha} M_{A,g}r(\tau(\eta))^{-\Lambda_\alpha} = M_A.
%\end{align*}
\end{proof}

Here we shall consider the correspondence of characteristic multipliers of $\gamma_\ast$ associated with $D_{\bf x}g({\bf x}_\ast(\tau))$ to those of $\gamma_0$ associated with $A(\eta)$.
Using (\ref{identity-Dg-Ag}), we have
\begin{equation*}
\mathcal{L}_g(\lambda) {\bf w} = \mathcal{L}_{A, g, \mathcal{E}}(\lambda) {\bf w}\quad \text{for all}\quad {\bf w}\in \Gamma_{\rm per}(T\mathcal{E})
\end{equation*}
and hence the following are equivalent:
\begin{itemize}
\item $\ker_{\rm per}\mathcal{L}_{g}(\lambda) \equiv \ker \mathcal{L}_{g}(\lambda)\cap \Gamma_{\rm per}(T\mathcal{E})\not = \{0\}$.
\item $\ker_{\rm per}\mathcal{L}_{A, g, \mathcal{E}}(\lambda) \equiv \ker \mathcal{L}_{A,g,\mathcal{E}}(\lambda)\cap \Gamma_{\rm per}(T\mathcal{E}) \not = \{0\}$.
\end{itemize}

Because $T\mathcal{E}$ is $D_{\bf x}g$-invariant, we also have (\ref{identity-Dg-Ag}) from Proposition \ref{prop-Dg-I-P}.
In this case, letting ${\bf v}(\tau) = e^{\lambda \tau}{\bf w}(\tau)$, we have
\begin{align*}
0 &= \{D_{\bf x}g({\bf x}_\ast(\tau)) -\lambda I \}{\bf w}(\tau) - \dot {\bf w}(\tau) = \{D_{\bf x}g({\bf x}_\ast(\tau)) -\lambda I \} (e^{-\lambda \tau}{\bf v}(\tau)) - \frac{d}{d\tau}(e^{-\lambda \tau}{\bf v}(\tau)) \\
%%%%%
	&= \{D_{\bf x}g({\bf x}_\ast(\tau)) -\lambda I \} (e^{-\lambda \tau}{\bf v}(\tau)) + \lambda e^{-\lambda \tau}{\bf v}(\tau) - e^{-\lambda \tau}\dot {\bf v}(\tau) \\
%%%%%
	&= D_{\bf x}g({\bf x}_\ast(\tau)) (e^{-\lambda \tau}{\bf v}(\tau)) - e^{-\lambda \tau}\dot {\bf v}(\tau).
%%%%%%%
\end{align*}
In particular, the function ${\bf v}(\tau)$ admits a characteristic multiplier $\mu = e^{\lambda T_\tau}$ as a solution to
\begin{equation*}
\dot {\bf v}(\tau) = D_{\bf x}g({\bf x}_\ast(\tau)) {\bf v}(\tau).
\end{equation*}
By the invariance, we also have ${\bf v}\in \Gamma(T\mathcal{E})$, implying that $\mu$ is a multiplier of $D_{\bf x}g({\bf x}_\ast)|_{T\mathcal{E}}$.
As a summary, the following statements are equivalent.
\begin{itemize}
\item $\ker_{\rm per}\mathcal{L}_{g}(\lambda)\not = \{0\}$,
\item $\ker_{\rm per}\mathcal{L}_{A, g, \mathcal{E}}(\lambda)\not = \{0\}$,
\item $\mu = e^{\lambda T_\tau}$ is a characteristic multiplier of $D_{\bf x}g({\bf x}_\ast)|_{T\mathcal{E}}$.
\end{itemize}
Next we lift the above correspondence to multipliers among $A$ and $D_{\bf x}g|_{T\mathcal{E}}$.
A reasonable approach is to compare them {\em on the quotient bundles} reducing the special invariant subbundles, instead of treating on the whole bundles.
Let 
\begin{equation*}
\gamma_0 :=\{ {\bf Y}_0(\eta)\mid \eta\in\mathbb{R}\},\quad
\gamma_\ast :=\{{\bf x}_\ast(\tau)\mid \tau\in\mathbb{R}\}
\end{equation*}
and consider the following bundles:
\begin{equation*}
\mathcal{Q}_{\bf Y} :=(\gamma_0\times\mathbb R^n)/R_{\bf Y}^{(2)},\quad
\mathcal{Q}_{\bf x} := T\mathcal{E}|_{\gamma_\ast} /T\gamma_\ast.
\end{equation*}
%を考える。ここで
%\begin{equation*}
%R_{\bf Y}^{(2)}(\eta)= {\rm span}\{\dot Y_0(\eta),\Lambda_\alpha Y_0(\eta)\},
%\quad
%T\gamma_\ast(\tau)={\rm span}\{\dot {\bf x}_\ast(\tau)\}
%\end{equation*}
%である。
Under the time-scale transformation $\tau=\tau(\eta)$, equivalently $\eta = \eta(\tau)$, introduce the following map between quotient bundles:
\begin{equation*}
\mathcal{S}_\eta: \mathcal{Q}_{\bf x}(\tau(\eta))\to \mathcal{Q}_{\bf Y}(\eta),
\quad
\mathcal{S}_\eta[\xi]:=[r(\tau(\eta))^{\Lambda_\alpha}\xi].
\end{equation*}
This map is well-defined. Indeed, when we replace $\xi$, the representative of $[\xi]$, with $\xi+a\dot {\bf x}_\ast$, we have
\begin{equation*}
r^{\Lambda_\alpha} \left(\xi+a\frac{d{\bf x}_\ast}d\tau \right) - r^{\Lambda_\alpha}\xi = a r^{\Lambda_\alpha}\frac{d{\bf x}_\ast}{d\tau}.
\end{equation*}
Now we know that
\begin{equation*}
r^{\Lambda_\alpha}\dot {\bf x}_\ast = r^{-k}\frac{d{\bf Y}_0}{d\eta} - \frac{1}{r}\frac{dr}{d\tau}\Lambda_\alpha {\bf Y}_0 \in R_{\bf Y}^{(2)}(\eta),
\end{equation*}
which implies that $S_\eta [\xi+a\dot {\bf x}_\ast] = S_\eta [\xi]$ in $\mathcal{Q}_Y$ and shows the well-definedness of $\mathcal{S}_\eta$.
%%%%
Moreover, from $r(\tau)$ being $T_\tau$-periodic and $\tau(\eta+T_\eta)=\tau(\eta)+T_\tau$, we have
\begin{equation*}
\mathcal{S}_{\eta+T_\eta}=\mathcal{S}_\eta
\end{equation*}
As a result, if a solution $q_{\bf x}(\tau)$ to the linear system restricted to $\mathcal{Q}_{\bf x}$ satisfies
\begin{equation*}
[q_{\bf x}](\tau+T_\tau)=\mu [q_{\bf x}](\tau),
\end{equation*}
then the quantity
\begin{equation*}
[q_{\bf Y}](\eta):=\mathcal{S}_\eta [q_{\bf x}](\tau(\eta))
\end{equation*}
satisfies
\begin{equation*}
[q_{\bf Y}](\eta+T_\eta)=\mu [q_{\bf Y}](\eta).
\end{equation*}
In other words, the scaling on quotient bundles {\em preserves the characteristic multipliers}, which is quite different from stationary cases \cite{asym2}.
Furthermore, the following map defines a bundle isomorphism between two quotient bundles:
\begin{equation*}
\mathcal{T}_\eta: \mathcal{Q}_{\bf Y}(\eta)\to \mathcal{Q}_{\bf x}(\tau(\eta)),
\quad
\mathcal{T}_\eta [\zeta]:=[ (I-P_\ast(\tau(\eta))) p_\alpha({\bf Y}_0(\eta))^{-\Lambda_\alpha}\zeta].
\end{equation*}
The map $\mathcal{T}_\eta$ is also well-defined.
Indeed, if we replace the representative $\zeta$ of $[\zeta]\in \mathcal{Q}_{\bf Y}(\eta)$ with
\begin{equation*}
\zeta + a \frac{d{\bf Y}_0}{d\eta} + b \Lambda_\alpha {\bf Y}_0,
\end{equation*}
then we have
\begin{align*}
r^{-\Lambda_\alpha} \frac{d{\bf Y}_0}{d\eta} &= r^k \frac{d{\bf x}_\ast}{d\tau} + r^k \frac{1}{r}\frac{dr}{d\tau} \Lambda_\alpha {\bf x}_\ast,\quad 
%%%
r^{-\Lambda_\alpha} \Lambda_\alpha {\bf Y}_0 = \Lambda_\alpha {\bf x}_\ast,
\end{align*}
and hence
\begin{equation*}
r^{-\Lambda_\alpha} \left( a \frac{d{\bf Y}_0}{d\eta} + b \Lambda_\alpha {\bf Y}_0\right) = a r^k \frac{d{\bf x}_\ast}{d\tau} + \left( a r^k \frac{1}{r}\frac{dr}{d\tau} + b\right) \Lambda_\alpha {\bf x}_\ast.
\end{equation*}
Acting the projection $(I - P_\ast(\tau(\eta)))$ to both sides, the $\Lambda_\alpha {\bf x}_\ast$-terms vanish, and we have
\begin{equation*}
(I - P_\ast(\tau(\eta))) \left\{ r^{-\Lambda_\alpha} \left( a \frac{d{\bf Y}_0}{d\eta} + b \Lambda_\alpha {\bf Y}_0\right)\right\} = a r^k \frac{d{\bf x}_\ast}{d\tau}.
\end{equation*}
Because the right-hand side is an element of $T\gamma_\ast$, it vanishes on the quotient bundle $\mathcal{Q}_{\bf x}$.
As a consequence, we have the following {\em correspondence}.
\begin{thm}
Two quotient bundles
\begin{equation*}
\mathcal{Q}_{\bf Y} =(\gamma_0\times\mathbb R^n)/R_{\bf Y}^{(2)},\quad
\mathcal{Q}_{\bf x} = T\mathcal{E}|_{\gamma_\ast} /T\gamma_\ast
\end{equation*}
are isomorphic via the following bundle isomorphism:
\begin{equation*}
\mathcal{S}_\eta: \mathcal{Q}_{\bf x}(\tau(\eta))\to \mathcal{Q}_{\bf Y}(\eta),
\quad
\mathcal{S}_\eta[\xi]:=[r(\tau(\eta))^{\Lambda_\alpha}\xi],
\end{equation*}
whose inverse is given by
\begin{equation*}
\mathcal{S}_\eta^{-1} = \mathcal{T}_\eta: \mathcal{Q}_{\bf Y}(\eta)\to \mathcal{Q}_{\bf x}(\tau(\eta)),
\quad
\mathcal{T}_\eta[\zeta] :=[ (I-P_\ast(\tau(\eta))) p_\alpha({\bf Y}_0(\eta))^{-\Lambda_\alpha}\zeta].
\end{equation*}
\end{thm}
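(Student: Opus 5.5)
Since both maps have already been shown to be well defined, the plan is to prove $\mathcal{T}_\eta\circ\mathcal{S}_\eta={\rm id}$ on $\mathcal{Q}_{\bf x}(\tau(\eta))$ and $\mathcal{S}_\eta\circ\mathcal{T}_\eta={\rm id}$ on $\mathcal{Q}_{\bf Y}(\eta)$, fiber by fiber. Smooth and periodic dependence on $\eta$ then follows from the periodicity $\mathcal{S}_{\eta+T_\eta}=\mathcal{S}_\eta$ and the smoothness of $r$, $P_\ast$ and ${\bf Y}_0$. Throughout I use $p_\alpha({\bf Y}_0(\eta))=r(\tau(\eta))$, established at the end of Section \ref{section-correspondence-1}, so that $p_\alpha({\bf Y}_0)^{-\Lambda_\alpha}=r^{-\Lambda_\alpha}$. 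I also use that the fibers of $R^{(2)}_{\bf Y}$ are $2$-dimensional, as in Proposition \ref{prop-balance-inv-subbundle}, so both quotient fibers have dimension $n-2$.

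For the first composition, take $\xi\in T_{{\bf x}_\ast(\tau)}\mathcal{E}$. Then $\mathcal{T}_\eta\mathcal{S}_\eta[\xi]=[(I-P_\ast)r^{-\Lambda_\alpha}r^{\Lambda_\alpha}\xi]=[(I-P_\ast)\xi]$. Since $I-P_\ast(\tau)$ is a projection onto $T_{{\bf x}_\ast(\tau)}\mathcal{E}$ and ${\bf n}_\ast^T\xi=0$, we have $P_\ast\xi=\varepsilon_2{\bf n}_\ast^T\xi=0$. Hence the result is $[\xi]$. This step is immediate.

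For the second composition, take $\zeta\in\mathbb{R}^n$. Then
\begin{equation*}
\mathcal{S}_\eta\mathcal{T}_\eta[\zeta]=[r^{\Lambda_\alpha}(I-P_\ast)r^{-\Lambda_\alpha}\zeta]=[\zeta-r^{\Lambda_\alpha}P_\ast r^{-\Lambda_\alpha}\zeta].
\end{equation*}
Now $P_\ast r^{-\Lambda_\alpha}\zeta=({\bf n}_\ast^T r^{-\Lambda_\alpha}\zeta)\,\Lambda_\alpha{\bf x}_\ast$, and $r^{\Lambda_\alpha}\Lambda_\alpha{\bf x}_\ast=\Lambda_\alpha{\bf Y}_0(\eta)$ because diagonal matrices commute. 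The correction term is therefore a scalar multiple of $e_2(\eta)=\Lambda_\alpha{\bf Y}_0(\eta)\in R^{(2)}_{\bf Y}(\eta)$, so it vanishes in $\mathcal{Q}_{\bf Y}(\eta)$. This gives $\mathcal{S}_\eta\mathcal{T}_\eta[\zeta]=[\zeta]$.

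Hence each $\mathcal{S}_\eta$ is a linear bijection with inverse $\mathcal{T}_\eta$, and the family depends continuously and periodically on the base point. This makes $\mathcal{S}$ a bundle isomorphism over the identification $\gamma_0\ni{\bf Y}_0(\eta)\leftrightarrow{\bf x}_\ast(\tau(\eta))\in\gamma_\ast$.

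The only point needing care is that $\mathcal{S}_\eta$ should be regarded as defined on all of $T\mathcal{E}$ with values in the full $\mathbb{R}^n$-quotient, while $\mathcal{T}_\eta$ really lands in $T\mathcal{E}$. The latter holds because $I-P_\ast$ projects onto $T\mathcal{E}$. I do not expect any real obstacle; the argument is a two-line verification once the well-definedness computations above are in hand.
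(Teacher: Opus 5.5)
Your proposal is correct and matches the paper's proof step for step. You verify $\mathcal{T}_\eta\mathcal{S}_\eta={\rm id}$ via $P_\ast\xi=0$ for $\xi\in T\mathcal{E}$, and you verify $\mathcal{S}_\eta\mathcal{T}_\eta={\rm id}$ by showing that the correction term $r^{\Lambda_\alpha}P_\ast r^{-\Lambda_\alpha}\zeta=({\bf n}_\ast^T r^{-\Lambda_\alpha}\zeta)\,\Lambda_\alpha{\bf Y}_0$ lies in $R^{(2)}_{\bf Y}$, both exactly as the paper does. The dimension count is not needed for the two-sided-inverse argument, and the full rank of $[e_1,e_2]$ is in fact automatic here, since $[e_1,e_2]=r^{\Lambda_\alpha}[\varepsilon_1,\varepsilon_2]\begin{pmatrix} r^k & 0\\ r^{k-1}\dot r & 1\end{pmatrix}$.
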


\begin{proof}
We have already seen that both $\mathcal{S}_\eta$ and $\mathcal{T}_\eta$ are well-defined.
Here we only verify that they are mutually inverted.
For $\xi \in T_{{\bf x}_\ast(\tau)}\mathcal{E}$, apply $\mathcal{T}_\eta$ to $\mathcal{S}_\eta [\xi] = [r^{\Lambda_\alpha}\xi]$. 
Then
\begin{equation*}
\mathcal{T}_\eta \mathcal{S}_\eta [\xi] = [ (I-P_\ast(\tau(\eta))) p_\alpha({\bf Y}_0(\eta))^{-\Lambda_\alpha} r(\tau(\eta))^{\Lambda_\alpha}\xi] =  [ (I-P_\ast(\tau(\eta))) \xi] = [\xi].
\end{equation*}
On the other hand, for $[\zeta]\in \mathcal{Q}_{\bf Y}$, we have
\begin{equation*}
\mathcal{S}_\eta \mathcal{T}_\eta [\zeta] = [r(\tau(\eta))^{\Lambda_\alpha} (I-P_\ast(\tau(\eta))) p_\alpha({\bf Y}_0(\eta))^{-\Lambda_\alpha} \zeta].
\end{equation*}
The difference of this element from $[\zeta]$ itself is
\begin{equation*}
\zeta - r(\tau(\eta))^{\Lambda_\alpha} (I-P_\ast(\tau(\eta))) p_\alpha({\bf Y}_0(\eta))^{-\Lambda_\alpha} \zeta = r(\tau(\eta))^{\Lambda_\alpha} P_\ast(\tau(\eta)) p_\alpha({\bf Y}_0(\eta))^{-\Lambda_\alpha} \zeta.
\end{equation*}
More precisely,
\begin{equation*}
r^{\Lambda_\alpha} P_\ast r^{-\Lambda_\alpha}\zeta = r^{\Lambda_\alpha} \Lambda_\alpha {\bf x}_\ast ({\bf n}^T r^{-\Lambda_\alpha} \zeta) = \Lambda_\alpha r^{\Lambda_\alpha} {\bf x}_\ast ({\bf n}^T r^{-\Lambda_\alpha} \zeta) = ({\bf n}^T r^{-\Lambda_\alpha} \zeta) \Lambda_\alpha {\bf Y}_0.
\end{equation*}
The difference belongs to the subbundle $R_{\bf Y}^{(2)}$ indicating that it vanishes on the quotient bundle $\mathcal{Q}_{\bf Y}$.
The proof of $\mathcal{T}_\eta = \mathcal{S}_\eta^{-1}$ is therefore completed.
\end{proof}
%\par
%\bigskip
Denote $\overline M_A$ and $\overline M_g$ be the monodromy matrices of $A(\eta)$ and $D_{\bf x}g({\bf x}_\ast(\tau))|_{T\mathcal{E}}$, induced on $\mathcal{Q}_{\bf Y}$ and $\mathcal{Q}_{\bf x}$, respectively.
Then we have
\begin{equation*}
\overline M_A = 
\mathcal{S}_0 \overline M_g \mathcal{S}_0^{-1}
\end{equation*}
In particular, the characteristic multipliers of the associated linear systems {\em outside} invariant subbundles $R_{\bf Y}^{(2)}$ and $R_{\bf x}^{(2)}$ coincide.

\begin{thm}[Algebraic-Dynamical Correspondence of Linearized Structure]
\label{thm-correspondence-C1}
Suppose that $\gamma_\ast = \{{\bf x}_\ast(\tau)\} \in P_{\mathcal{E}}(g)$ corresponds to a periodic solution ${\bf Y}_\ast(\eta) \equiv {\bf Y}_0(\eta)$ following the correspondence in Theorem \ref{thm-correspondence-C0}.
Also suppose that Assumption \ref{ass-diff-QH} holds.
Then the associated characteristic multipliers and vectors on the subbundles correspond to each other according to Tables \ref{table-eigen1} and \ref{table-eigen2}.
\end{thm}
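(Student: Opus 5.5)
The proof assembles the pieces already established and organizes them according to the splitting of each fiber into the distinguished two-dimensional part and the quotient. Fix the correspondence from Theorem \ref{thm-correspondence-C0}. By Corollary \ref{cor-from-balance-to-desing}, ${\bf Y}_\ast={\bf Y}_0$, $r(\tau(\eta))=p_\alpha({\bf Y}_0(\eta))$ is $T_\tau$-periodic and positive, and the periods are related by $T_\eta = k\int_0^{T_\tau}G({\bf x}_\ast(\tau))\,d\tau$ from (\ref{period-eta}). The argument then has three steps, corresponding to the three groups of rows in the tables: the phase direction, the scaling direction, and the quotient bundles.

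\emph{Phase direction.} Relate $e_1=\dot{\bf Y}_0$ and $\varepsilon_1=\dot{\bf x}_\ast$. Both solve their respective variational equations (\ref{var-balance}) and (\ref{var-desing}), so each carries multiplier $1$. The scaling identity used in the well-definedness of $\mathcal{S}_\eta$,
\[
r^{\Lambda_\alpha}\varepsilon_1 = r^{-k}e_1 - r^{-1}\dot r\,e_2,
\]
shows that $r^{\Lambda_\alpha}$ maps $T\gamma_\ast$ into $R^{(2)}_{\bf Y}$. This identifies the multiplier-$1$ rows.

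\emph{Scaling direction.} Compare the reduced $2\times 2$ systems on $R^{(2)}_{\bf Y}$ and $R^{(2)}_{\bf x}$. These are the systems with matrices $\begin{pmatrix}0&k\\0&1\end{pmatrix}$ and $\begin{pmatrix}0&2c-1+k\\0&-C_\ast(\tau)\end{pmatrix}$ computed before Corollaries \ref{cor-ch-balance} and \ref{cor-ch-desing}. Their second multipliers are $e^{T_\eta}$ and $\exp(-\int_0^{T_\tau}G\,d\tau)=e^{-T_\eta/k}$, respectively. I would record this explicit relation as the table entry. I expect it to be the main obstacle, because these multipliers are \emph{not} preserved, unlike those on the quotient. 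The mismatch has to be explained. The scaling direction $e_2=\Lambda_\alpha{\bf Y}_0$ is mapped by $r^{-\Lambda_\alpha}$ to $\varepsilon_2$, which is transversal to $\mathcal{E}$. However, $D_{\bf x}g$ in the normal direction is not conjugate to $A$. The relation between $A_g$ and $A$ given by (\ref{similar-A-Ag}) holds only on $\Gamma(T\mathcal{E})$ through Proposition \ref{prop-Dg-I-P}, and the normal component of $D_{\bf x}g$ carries the extra terms $(2c-1)\tilde f\,{\bf n}_\ast^T$ and $-C_\ast\Lambda_\alpha$. The plan is to state this row as a correspondence of \emph{vectors} (via $r^{\mp\Lambda_\alpha}$) together with the explicit formulas for the two multipliers, both determined by $T_\eta$, rather than as an equality of multipliers. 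I would also note that $e^{T_\eta}>1$ and $e^{-T_\eta/k}<1$.

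\emph{Quotient bundles.} Here I invoke the preceding theorem: $\mathcal{S}_\eta$ is a bundle isomorphism $\mathcal{Q}_{\bf x}\to\mathcal{Q}_{\bf Y}$ with inverse $\mathcal{T}_\eta$, and it satisfies $\mathcal{S}_{\eta+T_\eta}=\mathcal{S}_\eta$. It remains to check that $\mathcal{S}_\eta$ intertwines the induced flows. For a solution ${\bf w}$ of (\ref{var-desing}) in $\Gamma(T\mathcal{E})$, Proposition \ref{prop-Dg-I-P} shows that $\mathcal{L}_{A,g}(0){\bf w}\in{\rm span}\{\varepsilon_2\}$. Applying (\ref{similar-A-Ag}) with $\lambda=0$, the vector $\mathcal{L}_A(0)(r^{\Lambda_\alpha}{\bf w})$ lies in ${\rm span}\{\Lambda_\alpha{\bf Y}_0\}\subset R^{(2)}_{\bf Y}$. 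Since $R^{(2)}_{\bf Y}$ is invariant by Proposition \ref{prop-balance-inv-subbundle}, the class $[r^{\Lambda_\alpha}{\bf w}]$ solves the induced quotient equation, after rewriting in $\eta$-time via (\ref{time-equiv-2}). Hence $\overline M_A=\mathcal{S}_0\overline M_g\mathcal{S}_0^{-1}$. The remaining multipliers therefore coincide with multiplicity, and the generalized multiplier vectors correspond through $\mathcal{S}_0$ and $\mathcal{T}_0$. This fills the rows of Table \ref{table-eigen2}.

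Finally, counting dimensions closes the argument. The space $\mathbb{R}^n$ splits as $R^{(2)}_{\bf Y}$ plus an $(n-2)$-dimensional quotient. Similarly, $T_{{\bf x}_\ast}\mathbb{R}^n$ splits as ${\rm span}\{\varepsilon_2\}$, $T\gamma_\ast$, and $\mathcal{Q}_{\bf x}$. So the multipliers listed above exhaust both spectra.
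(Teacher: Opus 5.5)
Your proposal is correct and follows essentially the paper's own assembly. The $R^{(2)}$ rows come from Corollaries \ref{cor-ch-balance} and \ref{cor-ch-desing} together with (\ref{period-eta}), giving second multipliers $e^{T_\eta}$ and $e^{-T_\eta/k}$, which are recorded rather than identified. The quotient rows come from Proposition \ref{prop-Dg-I-P}, the scaling identity (\ref{similar-A-Ag}) and the isomorphism $\mathcal{S}_\eta$. Your explicit check that $\mathcal{L}_A(0)(r^{\Lambda_\alpha}{\bf w})\in{\rm span}\{\Lambda_\alpha{\bf Y}_0\}\subset R^{(2)}_{\bf Y}$ usefully spells out the intertwining step the paper passes over before asserting $\overline M_A=\mathcal{S}_0\overline M_g\mathcal{S}_0^{-1}$.

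There are two slips, both only in your explanatory remarks. First, (\ref{similar-A-Ag}) holds for arbitrary sections; it is Proposition \ref{prop-Dg-I-P}, the passage from $A_g$ to $D_{\bf x}g$, that is restricted to $\Gamma(T\mathcal{E})$. Second, $-C_\ast\Lambda_\alpha$ is already part of $A_g$, so the genuinely extra terms are $D_{\bf x}g({\bf x}_\ast)-A_g=(2c-1)\tilde f({\bf x}_\ast)\,{\bf n}_\ast^T-\Lambda_\alpha{\bf x}_\ast\nabla_{\bf x}G({\bf x}_\ast)^T$.
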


\begin{table}[ht]\em
\centering
%\resizebox{\textwidth}{!}
{
\begin{tabular}{cccc}
\hline
  & $A(\eta)$ & $D_{\bf x}g({\bf x}_0(\tau))$\\
\hline\\[-2mm]
Multipliers for $R_\ast^{(2)}$ & $1$, $\exp\left( T_\eta \right)$ & $1$, $\exp\left(-  \int_0^{T_\eta} \frac{G({\bf Y}_0 (\eta))}{r(\eta)^{2c} } d\eta \right) $\\ [1mm]
%%%
Bases for $R_\ast^{(2)}$ & $\frac{d{\bf Y}_0}{d\eta}(\eta)$, $\Lambda_\alpha {\bf Y}_0(\eta)$ & $\frac{d}{d\tau}\{ r(\eta)^{-\Lambda_\alpha}{\bf Y}_0(\eta)\}$, $\Lambda_\alpha r(\eta)^{-\Lambda_\alpha}{\bf Y}_0(\eta)$ \\  [1mm]
%%%
Multipliers for quotient & $\mu$ & $\mu$ \\  [1mm]
Bases for quotient & $[\zeta]$ & $[ (I-P_\ast(\tau(\eta))) r(\tau(\eta))^{-\Lambda_\alpha}\zeta]$ \\  [1mm]
\hline 
\end{tabular}%
}
\caption{Correspondence of eigenstructures from $A(\eta)$ to $D_{\bf x}g({\bf x}_0(\tau))$}
\flushleft
The bundle $R_\ast^{(2)}$ denotes $R_{\bf Y}^{(2)}$ or $R_{\bf x}^{(2)}$ correspondingly.
The function $r(\eta) = r(\tau (\eta))$ is $p_\alpha({\bf Y}_0(\eta))$. 
Once a periodic solution ${\bf Y}_0(\eta)$ of the balance law and characteristic multipliers of $A(\eta)$ are given, corresponding periodic solution on the horizon ${\bf x}_0(\tau(\eta))$ and all characteristic multipliers of $D_{\bf x}g({\bf x}_0(\tau))$ are constructed by the rule on the table.
\label{table-eigen1}
\end{table}%

\begin{table}[ht]\em
\centering
%\resizebox{\textwidth}{!}
{
\begin{tabular}{cccc}
\hline
  & $D_{\bf x}g({\bf x}_\ast(\tau))$  & $A(\eta)$\\
\hline\\[-2mm]
Multipliers for $R_\ast^{(2)}$ & $1$, $\exp\left(- \int_0^{T_{\tau}}C_\ast(\tau)d\tau \right)$ & $1$, $\exp\left( T_\eta \right)$ \\ [1mm]
%%%
Bases for $R_\ast^{(2)}$ & $\frac{d{\bf x}_\ast}{d\tau}(\tau)$, $\Lambda_\alpha {\bf x}_\ast(\tau)$ & $\frac{d}{d\eta}\{ r(\tau(\eta))^{\Lambda_\alpha}{\bf x}_\ast(\tau(\eta))\}$, $\Lambda_\alpha r(\tau(\eta))^{\Lambda_\alpha}{\bf x}_\ast(\tau(\eta))$ \\  [1mm]
%%%
Multipliers for quotient & $\mu$ & $\mu$ \\  [1mm]
Bases for quotient & $[\xi]$ & $[r(\tau(\eta))^{\Lambda_\alpha} \xi]$ \\  [1mm]
\hline 
\end{tabular}%
}
\caption{Correspondence of eigenstructures from $D_{\bf x}g({\bf x}_\ast(\tau))$ to $A(\eta)$}
\flushleft
The bundle $R_\ast^{(2)}$ denotes $R_{\bf x}^{(2)}$ or $R_{\bf Y}^{(2)}$ correspondingly.
The function $r(\tau) = r(\tau(\eta))$ is given in Lemma \ref{lem-r} provided $\int_0^{T_{\tau}}C_\ast(\tau)d\tau > 0$.
Once a periodic solution ${\bf x}_\ast(\tau)$ of the desingularized vector field $g$ and characteristic multipliers of $D_{\bf x}g({\bf x}_\ast(\tau))$ are given, corresponding periodic solution ${\bf Y}_\ast(\eta)$ of the balance law and all characteristic multipliers of $A(\eta)$ are constructed by the rule on the table.
\label{table-eigen2}
\end{table}%

\section{Criterion of Periodic Blow-Ups and Examples}
\label{section-example}

\subsection{Simple criterion of the existence of periodic blow-ups}

Now we are ready to state a simple criterion of the existence of periodic blow-ups without directly using embeddings.
The proof is, like the stationary case \cite{asym2}, combination of blow-up descriptions via dynamics at infinity, in particular hyperbolic nature of invariant sets on the horizon; Theorem \ref{thm:blowup} (cf. \cite{Mat2018, Mat2025_NHIM}), as well as the algebraic-dynamical correspondence of linearized structures, Theorem \ref{thm-correspondence-C1}.

\begin{thm}[Criterion of existence of periodic blow-ups]
\label{thm-existence-blow-up}
Let $f \in AQH_{n}(\alpha, k)$ satisfying Assumption \ref{ass-diff-QH}, and ${\bf Y}_0(\eta)$ be a periodic solution to the balance law (\ref{dfn-balance}) with $p_\alpha({\bf Y}_0(\eta)) > 0$ for all $\eta$.
Assume that the corresponding characteristic multipliers ${\rm Ch}({\bf Y}_0)$ consist of $1$ as the simple multiplier and 
\begin{equation*}
\left\{ {\rm Ch}({\bf Y}_0)\setminus \{1\} \right\} \cap S^1 = \emptyset,
\end{equation*}
where $S^1$ denotes the unit circle in $\mathbb{C}$.
Then (\ref{ODE-original}) possesses a blow-up solution ${\bf y}(t)$ with the asymptotic behavior $ {\bf y}(t) \sim {\bf Y}_0(-c'\ln \theta(t)) \theta(t)^{-\frac{1}{k}\Lambda_\alpha}$ as $t\to t_{\max} < \infty$ for some constant $c' >0$.
\end{thm}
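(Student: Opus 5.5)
The plan is to reduce the statement to Theorem \ref{thm:blowup}: we exhibit a hyperbolic periodic orbit on the horizon and translate its asymptotics back to the original variables. There are three steps.

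\emph{Step 1: the orbit on the horizon.} Starting from ${\bf Y}_0(\eta)$, Theorem \ref{thm-correspondence-C0} with the time scaling (\ref{time-equiv-1}) gives the periodic orbit $\gamma_{0,\ast}=\{{\bf x}_0(\tau)\}\in P_{\mathcal{E}}(g)$. By Corollary \ref{cor-from-balance-to-desing} and (\ref{period-eta}),
\[
\int_0^{T_\tau}G({\bf x}_0(\tau))\,d\tau = T_\eta/k>0 .
\]
Two consequences follow. The function $r(\tau)=p_\alpha({\bf Y}_0(\eta(\tau)))$ is the positive periodic solution of (\ref{X-linear}), and the reconstruction in the converse direction returns ${\bf Y}_0$ itself. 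Hence Theorem \ref{thm-correspondence-C1} and Table \ref{table-eigen1} apply to this pair.

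\emph{Step 2: hyperbolicity, which is the main obstacle.} We must show $\gamma_{0,\ast}\in P_{{\rm hyp},\mathcal{E}}(g)$, i.e.\ that $1$ is a simple multiplier of (\ref{var-desing}) and all other multipliers lie off $S^1$. The difficulty is that the spectrum of $A$ and that of $D_{\bf x}g$ are only matched piecewise: on $R^{(2)}$ and on the quotient bundles.

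First decompose the spectrum of $A$. By Corollary \ref{cor-ch-balance}, $R^{(2)}_{\bf Y}$ carries the multipliers $1$ and $e^{T_\eta}$. The multipliers of the full system are the union (with multiplicity) of those on the invariant subbundle and those induced on the quotient $\mathcal{Q}_{\bf Y}$; this follows from the block-triangular form of the monodromy in a frame adapted to $R^{(2)}_{\bf Y}$. Since $1$ is simple in ${\rm Ch}({\bf Y}_0)$, it follows that every quotient multiplier $\mu$ satisfies $|\mu|\neq 1$. (Full rank of $E(\eta)$, needed in Proposition \ref{prop-balance-inv-subbundle}, should also follow here: otherwise $\Lambda_\alpha{\bf Y}_0$ would be a multiple of $\dot{\bf Y}_0$, forcing a degenerate multiplier structure. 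I would check this, or else add it as a hypothesis.)

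Now pass to $D_{\bf x}g$. The quotient isomorphism $\mathcal{S}_\eta$ gives $\overline M_A=\mathcal{S}_0\overline M_g\mathcal{S}_0^{-1}$, so the multipliers of $D_{\bf x}g|_{T\mathcal{E}}$ on $\mathcal{Q}_{\bf x}$ coincide with these $\mu$. Adding the multiplier $1$ from $T\gamma_\ast$, the multipliers of the full system are the tangential ones (from $T\mathcal{E}$) together with the transversal one. By Corollary \ref{cor-ch-desing} and Step 1, the transversal multiplier is $\exp(-T_\eta/k)<1$. Applying the same triangular decomposition to $T\mathcal{E}\subset T\mathbb{R}^n$ and to $T\gamma_\ast\subset T\mathcal{E}$, the full spectrum of (\ref{var-desing}) is
\[
\{1\}\cup\{\mu\}\cup\{e^{-T_\eta/k}\},
\]
with $1$ simple. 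Hence $\gamma_{0,\ast}$ is hyperbolic, with a nontrivial stable direction transversal to $\mathcal{E}$ pointing into $\mathcal{D}$.

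\emph{Step 3: the blow-up and its profile.} Because the transversal multiplier is less than $1$ and the horizon is invariant (Remark \ref{rem-invariance}), the stable manifold theorem for hyperbolic periodic orbits gives $W^s_{\rm loc}(\gamma_{0,\ast})\cap\mathcal{D}\neq\emptyset$. Pick ${\bf x}$ in it and set ${\bf y}(0)=T_{\rm para}^{-1}({\bf x})$. Theorem \ref{thm:blowup} then yields $t_{\max}<\infty$ and
\[
y_j(t)\sim c_j(-c'\ln\theta(t))\,\theta(t)^{-\alpha_j/k}.
\]
It remains to identify $c_j$ with $Y_{0,j}$. For this, integrate the desingularized time along the orbit: near the horizon, $\kappa_\alpha^{-1}d\kappa_\alpha/d\tau = G$ (Lemma \ref{lem-G}). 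Combining this with (\ref{time-equiv-2}) shows $\eta\sim -c'\ln\theta$ and $\kappa_\alpha^{\Lambda_\alpha}{\bf x}\sim \theta^{-\Lambda_\alpha/k}\,r^{\Lambda_\alpha}{\bf x}_0 = \theta^{-\Lambda_\alpha/k}\,{\bf Y}_0$, after absorbing the asymptotic phase into the constant. This bookkeeping, including the normalization $k\theta\,\kappa^k\to$ const, is routine but must be done carefully; it mirrors the stationary argument of \cite{asym2}.
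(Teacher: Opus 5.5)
Your overall route is the paper's. Its proof of Theorem \ref{thm-existence-blow-up} is a one-line appeal to Theorem \ref{thm:blowup} combined with Theorem \ref{thm-correspondence-C1}. Your Steps 1--2 correctly supply the spectral bookkeeping that line leaves implicit: the flag $T\gamma_\ast\subset T\mathcal{E}\subset\mathbb{R}^n$, the quotient multipliers transported by $\mathcal{S}_\eta$, and the transversal multiplier $e^{-T_\eta/k}<1$. Together these give hyperbolicity and a stable direction into $\mathcal{D}$.

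The full-rank point you left open does in fact follow.
\begin{itemize}
\item By quasi-homogeneity, $e^{\eta}f_{\alpha,k}({\bf Y}_0(\eta))=\frac{1}{k}e^{\eta}\bigl(k e_1(\eta)+e_2(\eta)\bigr)$ solves (\ref{var-balance}) with multiplier $e^{T_\eta}\neq 1$.
\item The solution $e_1={\bf Y}_0'$ has multiplier $1$.
\item Neither vanishes identically. If $f_{\alpha,k}({\bf Y}_0)\equiv 0$, then ${\bf Y}_0'=-\frac{1}{k}\Lambda_\alpha{\bf Y}_0$, which is incompatible with periodicity and $p_\alpha({\bf Y}_0)>0$.
\end{itemize}
So they are independent solutions and, by uniqueness, pointwise independent; hence $E(\eta)$ has rank $2$ everywhere. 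A small correction: it is $r^k$, not $r$, that solves (\ref{X-linear}).

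The genuine problem is in Step 3, exactly where the periodic case departs from the stationary one. The normalization ``$k\theta\kappa_\alpha^k\to$ const'' is the stationary fact and is false here. If it held, you would get $\theta^{\Lambda_\alpha/k}{\bf y}\approx{\rm const}^{\Lambda_\alpha}{\bf x}_0(\tau)$. That is not ${\bf Y}_0=r^{\Lambda_\alpha}{\bf x}_0$ unless $r$ is constant, so it contradicts the formula you write just before.

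What actually holds comes from combining $dt/d\tau=h\,\kappa_\alpha^{-k}$, with $h=\frac{1}{2c}\{1+(2c-1)p_\alpha({\bf x})^{2c}\}$, and Lemma \ref{lem-G}:
\begin{equation*}
\theta(t(\tau))\,\kappa_\alpha(\tau)^k=\int_\tau^\infty h(s)\exp\Bigl(-k\int_\tau^s G({\bf x}(u))\,du\Bigr)\,ds .
\end{equation*}
Since ${\bf x}(\tau)$ approaches ${\bf x}_0(\tau+\tau_0)$ exponentially (asymptotic phase) and $h\to 1$, this differs by an exponentially small amount from $X(\tau+\tau_0)$. Here $X(\tau)=\int_\tau^\infty\exp(-k\int_\tau^s G({\bf x}_0(u))\,du)\,ds$ is exactly the periodic solution $r^k$ of (\ref{X-linear}). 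This is what yields $\theta^{\Lambda_\alpha/k}{\bf y}-r^{\Lambda_\alpha}{\bf x}_0(\cdot+\tau_0)\to 0$.

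Likewise, the ratio asymptotics $\eta\sim-c'\ln\theta$ are too weak for a periodic profile: you need $\eta(\tau+\tau_0)+\ln\theta(t(\tau))\to{\rm const}$. This holds because $\frac{d}{d\tau}(-\ln\theta)=h/(\theta\kappa_\alpha^k)$ and $\frac{d}{d\tau}\eta(\tau+\tau_0)=1/X(\tau+\tau_0)$ differ by an exponentially small, hence integrable, amount. The outcome is $c'=1$ together with an additive phase. That phase is absorbed by translating ${\bf Y}_0$, which is again a periodic solution of the balance law, not into $c'$.
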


Through the above theorem, only the balance law is sufficient to consider the existence of type-I oscillatory blow-ups.
In particular, no explicit construction of a compactification is necessary for applying the criterion and the blow-up ansatz is justified, as far as type-I behavior is considered.
Additionally, also like the stationary case, the correspondence theorem shows the gap of stability information among two systems, which will provide a caution to consider the dynamical nature of blow-ups {\em based on specific ansatz}.

\begin{thm}[Stability gap]
\label{thm-stability}
Let $f \in AQH_{n}(\alpha, k)$ satisfying Assumption \ref{ass-diff-QH}.
Let $\gamma_\ast$ be a hyperbolic periodic orbit on the horizon for the desingularized vector field $g$ associated with $f$ such that $W^s_{\rm loc}(\gamma_\ast; g)\cap \mathcal{D}\not = \emptyset$, and ${\bf Y}_0(\eta)$ be the corresponding periodic solution of the balance law (\ref{dfn-balance}) with $p_\alpha({\bf Y}_0(\eta)) > 0$ for any $\eta$.
If
\begin{align*}
m &:= \dim W^s_{\rm loc}(\gamma_\ast; g),\quad m_A = \sharp \{ \mu\in {\rm Ch}({\bf Y}_0) \mid |\mu| \leq 1 \},
\end{align*}
then we have $m = m_A+1$, where characteristic multipliers are counted with algebraic multiplicity.
\end{thm}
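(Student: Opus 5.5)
The plan is to count the stable multipliers of $\gamma_\ast$ for the full variational equation (\ref{var-desing}) by splitting the $n$ multipliers into three groups: those on $R^{(2)}_{\bf x}$, those on the quotient $\mathcal{Q}_{\bf x}=T\mathcal{E}|_{\gamma_\ast}/T\gamma_\ast$, and the trivial multiplier. Theorem \ref{thm-correspondence-C1} and the quotient isomorphism then carry each group over to ${\rm Ch}({\bf Y}_0)$.

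First I fix the filtration $T\gamma_\ast\subset T\mathcal{E}|_{\gamma_\ast}\subset T_{\gamma_\ast}\mathbb{R}^n$. All three are invariant under (\ref{var-desing}): the first is spanned by $\dot{\bf x}_\ast$, and the second is invariant because $\mathcal{E}$ is invariant (Remark \ref{rem-invariance}). The monodromy matrix $M_g$ is therefore block triangular in an adapted frame, so its multipliers, counted with algebraic multiplicity, are the union of three groups. On $T\gamma_\ast$ the multiplier is $1$. On the quotient $\mathcal{Q}_{\bf x}$ we get the multipliers of $\overline M_g$. On the normal direction $T\mathbb{R}^n/T\mathcal{E}$, which is represented by $\varepsilon_2=\Lambda_\alpha{\bf x}_\ast$ since ${\bf n}_\ast^T\varepsilon_2=1$, Corollary \ref{cor-ch-desing} gives $\exp(-\int_0^{T_\tau}G({\bf x}_\ast)\,d\tau)$. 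By Corollary \ref{cor-from-balance-to-desing} and (\ref{period-eta}) this equals $e^{-T_\eta/k}<1$ (or at least it is $<1$). The same filtration on the balance side, $R^{(2)}_{\bf Y}$ with multipliers $1$ and $e^{T_\eta}$ (Corollary \ref{cor-ch-balance}), shows that ${\rm Ch}({\bf Y}_0)$ consists of $1$, $e^{T_\eta}>1$, and the multipliers of $\overline M_A$.

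Next I invoke $\overline M_A=\mathcal{S}_0\overline M_g\mathcal{S}_0^{-1}$. This says the quotient multipliers coincide, including algebraic multiplicities, because similar matrices have the same characteristic polynomial.

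Hyperbolicity of $\gamma_\ast$ means that $1$ is a simple multiplier of $M_g$ and that no other multiplier lies on $S^1$. By standard theory, $m=\dim W^s_{\rm loc}(\gamma_\ast;g)$ equals $1$ (the orbit direction) plus the number of multipliers with $|\mu|<1$. I read $W^s_{\rm loc}(\gamma_\ast)$ as including the orbit itself; otherwise I adjust the convention, and this bookkeeping is the point to check. So
\[
m = 1 + 1 + \#\{\mu\in{\rm Ch}(\overline M_g): |\mu|<1\},
\]
where the first $1$ is the orbit direction and the second $1$ is the normal multiplier $e^{-T_\eta/k}<1$. On the other side,
\[
m_A = 1 + \#\{\mu\in{\rm Ch}(\overline M_A): |\mu|\le 1\},
\]
where $1$ counts the multiplier $1$ and $e^{T_\eta}$ is excluded. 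Quotient multipliers cannot lie on $S^1$ by hyperbolicity, so $\le 1$ and $<1$ agree there, and therefore $m=m_A+1$.

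The main obstacle is making this counting rigorous with algebraic multiplicities. Two things must be checked. First, the block-triangular decomposition on the balance side requires that $R^{(2)}_{\bf Y}$ has full rank, which follows from the fiber-preserving isomorphism with $R^{(2)}_{\bf x}$ built from $r^{\pm\Lambda_\alpha}$. Second, the similarity of the induced quotient monodromies must be justified as an actual conjugacy of the induced flows, not just as a map on eigenvectors. I would deduce this from the operator identities (\ref{identity-Dg-Ag}) and (\ref{similar-A-Ag}), composed with the projection $I-P_\ast$, which pass to the quotient.
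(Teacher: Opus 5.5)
Your proposal is correct and follows the route the paper intends. The paper gives no separate proof and treats the theorem as a direct consequence of Theorem~\ref{thm-correspondence-C1}: split the multipliers into the trivial one, the transverse one ($e^{-T_\eta/k}<1$ for $g$ versus $e^{T_\eta}>1$ for $A$) and the shared quotient multipliers, then count exactly as you do, with $W^s_{\rm loc}(\gamma_\ast;g)$ containing the orbit. Your two extra checks are sound and supply details the paper leaves implicit: full rank of $R^{(2)}_{\bf Y}$ from $e_2=r^{\Lambda_\alpha}\varepsilon_2$ and $e_1=r^{k}r^{\Lambda_\alpha}\varepsilon_1+r^{k-1}\dot r\,r^{\Lambda_\alpha}\varepsilon_2$, and the intertwining of the induced quotient flows by $\mathcal{S}_\eta$.
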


In the successive subsections, two examples to confirm the availability of our results are collected.
These examples come from preceding results.
\begin{rem}[Numerical computation of characteristic multipliers]
In the computations below, periodic solutions are numerically computed by a single-shooting Newton method supplemented with a phase condition. 
Characteristic multipliers are then computed in the standard manner by simultaneously integrating the variational equation along the periodic solution (see, e.g., \cite{K2004} for the standard Floquet-theoretic background).
More precisely, for a periodic solution ${\bf Y}_0(\eta)$ of ${\bf Y}'= F({\bf Y})$ with period $T_\eta$, we solve
\begin{equation*}
\frac{d\Phi}{d\eta} = D_{\bf Y}F({\bf Y}_0(\eta)) \Phi,\quad \Phi(0) = I.
\end{equation*}
The matrix $\Phi(T_\eta)$ is the monodromy matrix, and its eigenvalues are taken as the characteristic multipliers. 
Periodic orbits and associated monodromy matrices are numerically computed by the shooting procedure described above.
\end{rem}

\subsection{Example 1: Li\'{e}nard-type equation}
The first example is the two-dimensional system
\begin{equation}
\label{Lienard}
\begin{cases}
\dot x = y, & \\
\dot y = -(\epsilon x^m + \sum_{k=0}^{m-1}a_k x^k) - y(x^n + \sum_{k=0}^{n-1}b_k x^k), & 
\end{cases}
\end{equation}
which is referred to as the {\em Li\'{e}nard equation}.
According to e.g. \cite{DH1999}, {\em periodic orbits at infinity} can be seen in (\ref{Lienard}) with appropriate degree, where $\epsilon = \pm 1$ if $m\not = 2n+1$, and $\epsilon \in \mathbb{R}\setminus \{0\}$ if $m=2n+1$.
%%%
\par
First note that the system (\ref{Lienard}) with $m=2n+1$ is $AQH_2(\alpha, n)$ with $\alpha = (1,n+1)$.
Arguments in \cite{Mat2018} show that the whole horizon is a hyperbolic repelling periodic orbit.
To confirm this observation in simple situation, consider the following system
\begin{equation}
\label{Lienard-simple}
\begin{cases}
y_1' = y_2, & \\
y_2' = -y_1^{2n+1} - y_1^n y_2. & 
\end{cases}
\end{equation}
In \cite{Mat2018}, the $(1,n+1)$-{\em quasipolar compactification coordinate} $(r,\theta)$ given by
\begin{equation*}
y_1 = \frac{{\rm Cs}\theta}{r},\quad y_2 = \frac{{\rm Sn}\theta}{r^{n+1}}\quad\text{ with }\quad {\rm Cs}^{2n+2}\theta + (n+1){\rm Sn}^2 \theta = 1
\end{equation*}
was applied to investigating type-I periodic blow-up with an appropriate time direction {\em when $n$ is even}.
%\par
\par
\bigskip
Here we shall investigate such a periodic blow-up from the viewpoint of the correspondence.
We apply the time-reversing to correspond to results in \cite{Mat2018}:
\begin{equation}
\label{Lienard-simple-reverse}
\begin{cases}
y_1' = -y_2, & \\
y_2' = y_1^{2n+1} + y_1^n y_2. & 
\end{cases}
\end{equation}
Note that (\ref{Lienard-simple-reverse}) belongs to $QH_2(\alpha, n)$.
Introducing
\begin{equation*}
y_1 = \frac{x_1}{1 - p_\alpha({\bf x})^{2(n+1)}},\quad y_2 = \frac{x_2}{ (1 - p_\alpha({\bf x})^{2(n+1)})^{n+1} },\quad p_\alpha({\bf x})^{2(n+1)} = x_1^{2(n+1)} + x_2^2,
\end{equation*}
we have the following desingularized vector field:
\begin{align}
\label{Lienard-simple-reverse-desing}
\dot{\bf x} &\equiv \frac{d{\bf x}}{d\tau} = \frac{1}{2(n+1)}\{1 + (2n+1)p_\alpha({\bf x})^{2(n+1)}\}\tilde f({\bf x}) - G({\bf x}) \Lambda_\alpha {\bf x},\\
%%%
\notag
\tilde f({\bf x}) &= (\tilde f_1({\bf x}), \tilde f_2({\bf x}))^T,\quad \tilde f_1({\bf x}) = -x_2,\quad \tilde f_2({\bf x}) = x_1^{2n+1} + x_1^n x_2,\\
\notag
G({\bf x}) &\equiv \sum_{j=1}^2 \frac{x_j^{2\beta_j - 1}}{\alpha_j} \tilde f_j({\bf x}) = -x_1^{2n+1}x_2 + \frac{x_2}{n+1}\{x_1^{2n+1} + x_1^n x_2\}.
\end{align}
The corresponding balance law is
\begin{equation}
\label{Lienard-simple-reverse-balance}
\frac{dY_1}{d\eta} = -\frac{1}{n}Y_1 - Y_2,\quad \frac{dY_2}{d\eta} = -\frac{n+1}{n}Y_2 + \{Y_1^{2n+1} + Y_1^n Y_2\}.
\end{equation}

Numerical computations show the following observations, which confirm the validity of our theoretical arguments, and an approach to provide the existence of blow-ups without compactifications.
\begin{observation}
Consider (\ref{Lienard-simple}) with the backward-time direction, namely (\ref{Lienard-simple-reverse}) with $n=2$.
Both systems (\ref{Lienard-simple-reverse-desing}) and (\ref{Lienard-simple-reverse-balance}) admit periodic solutions, in which the orbit for (\ref{Lienard-simple-reverse-desing}) is located on the horizon.
Associated quantities are collected in Table \ref{table-Lienard}.
In particular, the following identity is confirmed:
\begin{align*}
T_\eta &:= 1.262967766799167,\quad T_\tau := 6.740686419615101,\\
\int_0^{T_\tau} & G({\bf x}_\ast(\tau))d\tau \approx 0.6314838833996594 \approx \frac{1}{2}T_\eta,
\end{align*}
confirming (\ref{period-eta}) with $k=n=2$.
Moreover, we easily see that the characteristic multiplier of (\ref{Lienard-simple-reverse-balance}) other than $1$ is $\exp(T_\eta)$.
Through the periodic trajectory $\{{\bf x}_\ast(\tau)\}\subset \mathcal{E}$, we also see that the multiplier of (\ref{Lienard-simple-reverse-desing}) other than $1$ is $\exp(-\int_0^{T_\tau} G({\bf x}_\ast(\tau))d\tau)$.
\end{observation}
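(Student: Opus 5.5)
The statement to justify is the Observation for (\ref{Lienard-simple-reverse}) with $n=2$. It is partly numerical and partly a consequence of the theory, so the plan is to compute the numerical objects once and then deduce every claimed identity from Theorem \ref{thm-correspondence-C0}, Corollary \ref{cor-from-balance-to-desing}, and Corollaries \ref{cor-ch-balance} and \ref{cor-ch-desing}.

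\emph{Step 1: compute a periodic solution of the balance law.} I would compute a periodic solution ${\bf Y}_0(\eta)$ of (\ref{Lienard-simple-reverse-balance}) by single shooting with a phase condition $Y_2(0)=0$. The Newton unknowns are $(Y_1(0),T_\eta)$. This should return $T_\eta\approx 1.26297$, with $p_\alpha({\bf Y}_0)>0$ along the orbit.

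\emph{Step 2: construct the orbit on the horizon.} Theorem \ref{thm-correspondence-C0} gives the orbit ${\bf x}_0=p_\alpha({\bf Y}_0)^{-\Lambda_\alpha}{\bf Y}_0$, which lies on $\mathcal{E}$ by construction. Its period $T_\tau=\int_0^{T_\eta}p_\alpha({\bf Y}_0)^2\,d\eta$ follows from (\ref{period}). As a consistency check, I would independently integrate (\ref{Lienard-simple-reverse-desing}) from a point on $\{x_1^6+x_2^2=1\}$. Invariance of $\mathcal{E}$ (Remark \ref{rem-invariance}) guarantees that the trajectory stays on this closed curve. The curve contains no equilibrium of $g$, which is checked by confirming that $g\neq 0$ on $\mathcal{E}$. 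Hence the whole horizon is a periodic orbit, and its return time should reproduce $T_\tau\approx 6.74069$.

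\emph{Step 3: check the integral identity.} The identity $\int_0^{T_\tau}G\,d\tau=T_\eta/k$ with $k=2$ is exactly (\ref{period-eta}). It follows from integrating $\frac{d}{d\tau}\log X=kG-1/X$ over one period. Numerically, I would simply compare the quadrature of $G({\bf x}_\ast(\tau))$ with $T_\eta/2$.

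\emph{Step 4: identify the multipliers.} Since $n=2$ makes the system two-dimensional, $R^{(2)}_{\bf Y}$ is the whole fiber, provided $e_1$ and $e_2$ are independent at one point (checked numerically). Corollary \ref{cor-ch-balance} then gives the multipliers $\{1,e^{T_\eta}\}$. For $g$, $R^{(2)}_{\bf x}$ is automatically the whole fiber, so Corollary \ref{cor-ch-desing} gives the multipliers $\{1,\exp(-\int G)\}$. I would verify both by computing the monodromy matrices $\Phi(T_\eta)$ and $\Phi(T_\tau)$ and comparing their determinants with Liouville's formula.

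The main obstacle is Step 1: obtaining a good initial guess for the Newton iteration. The balance-law orbit is repelling, because its nontrivial multiplier is $e^{T_\eta}>1$. I would therefore start from the backward-time flow, where the orbit attracts, to get a guess before refining with shooting. The nonlinearity $Y_1^5$ also demands tight integration tolerances.
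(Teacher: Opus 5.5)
Your plan is correct and follows essentially the paper's route: shooting with a phase condition, monodromy matrices from the variational equations, and comparison with (\ref{period-eta}) and Corollaries \ref{cor-ch-balance} and \ref{cor-ch-desing}, with two useful additions of your own, namely that $g\neq 0$ on the closed curve $\mathcal{E}$ (so the whole horizon is the periodic orbit) and that in the plane $R^{(2)}_{\bf Y}$ and $R^{(2)}_{\bf x}$ fill the fibers (so those corollaries give all the multipliers). Two small fixes: the system is planar for every $n$ (here $n$ is the degree, not the dimension), and you should note that Assumption \ref{ass-diff-QH}, which Corollary \ref{cor-ch-desing} requires, holds trivially here because (\ref{Lienard-simple-reverse}) is exactly quasi-homogeneous, so $\tilde f = f_{\alpha,k}$.
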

The concrete profile of the corresponding periodic solution of (\ref{Lienard-simple-reverse-balance}) is drawn in Figure \ref{fig-Lienard}.

\begin{table}[ht]\em
\centering
%\resizebox{\textwidth}{!}
{
\begin{tabular}{ccc}
\hline
  & Balance Law (\ref{Lienard-simple-reverse-balance}) & Dynamics at Infinity (\ref{Lienard-simple-reverse-desing}) \\
\hline\\[-2mm]
Period & $1.262967766799167$ & $6.740686419615101$ \\ [1mm]
%%%
Characteristic Multipliers ($\not = 1$) & $3.535899656899745$ & $0.5318020829448200$ \\  [1mm]
%%%
\hline 
\end{tabular}%
}
\caption{Quantities characterizing periodic profiles of (\ref{Lienard-simple-reverse-balance}) and (\ref{Lienard-simple-reverse-desing}) expressing blow-up}
%\flushleft
\label{table-Lienard}
\end{table}%

\begin{figure}[htbp]\em
\centering
\includegraphics[width=7cm]{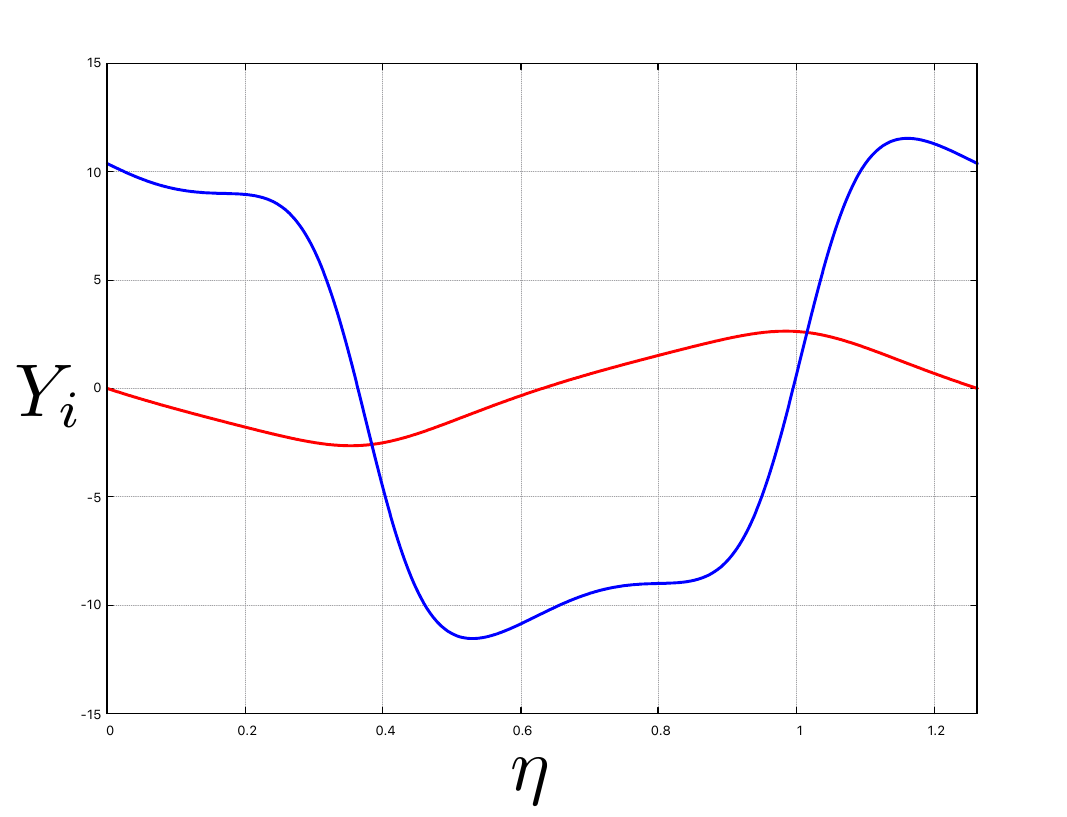}
\caption{Periodic solution of (\ref{Lienard-simple-reverse-balance})}
\label{fig-Lienard}
One-period profiles are drawn.
Red: $Y_1(\eta)$, Blue: $Y_2(\eta)$.
\end{figure}

\subsection{Example 2: Suspension bridge equation}
The next example is the blow-up profile of solutions of
\begin{equation}
\label{SB}
\frac{d^4u}{dr^4}(r) + a \frac{d^2u}{dr^2}(r) + f(u(r)) = 0,
\end{equation}
where $a \in \mathbb{R}$ and $f$ is a locally Lipschitz function.
This equation serves as a model for pattern formation in many physical, chemical or biological systems, and is often used to investigate localization and spreading of deformation of a strut confined by an elastic foundation (cf. \cite{PT2001}).
In some situations (\ref{SB}) admits oscillatory blow-up solutions whose details are stated in e.g., \cite{BFGK2011, GP2011, GP2013}, while their profiles by means of bounded periodic solutions for appropriately transformed equations are investigated  (\cite{DLP2015} etc.).
Here we revisit such an example again and apply our methodology to it as well as expand the existence result of oscillatory blow-ups.
\par
Now we restrict our attention to $f(u) = u^3$, and rewrite (\ref{SB}) as the system of the first order ODEs:
\begin{equation}
\label{SB-system}
\begin{cases}
u_1' = u_2, \\
u_2' = u_3, \\
u_3' = u_4, \\
u_4' = -a u_3 - u_1^3. \\
\end{cases}
\end{equation}
Then we have the following observation.

\begin{lem}
The vector field of (\ref{SB-system}) belongs to $AQH_4(\alpha, 1)$, where $\alpha = (2,3,4,5)$.
\end{lem}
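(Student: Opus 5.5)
Checking the claim comes down to Definition \ref{dfn-AQH}: we must exhibit $f_{\alpha,k}\in QH_4(\alpha,1)$ with $\alpha=(2,3,4,5)$, $k=1$, and show the residual satisfies (\ref{residual}). Write the right-hand side of (\ref{SB-system}) as $f({\bf u})=(u_2,u_3,u_4,-au_3-u_1^3)^T$. We split it as $f=f_{\alpha,1}+f_{\rm res}$ with
\begin{equation*}
f_{\alpha,1}({\bf u}) = (u_2,\ u_3,\ u_4,\ -u_1^3)^T,\qquad f_{\rm res}({\bf u}) = (0,\ 0,\ 0,\ -a u_3)^T .
\end{equation*}
Since every $\alpha_i>0$, the admissible domain is $U=\mathbb{R}^4$ and $I_\alpha=\{1,2,3,4\}$.

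We first verify quasi-homogeneity componentwise. The requirement is $f_{i;\alpha,1}(s^{\Lambda_\alpha}{\bf u})=s^{1+\alpha_i}f_{i;\alpha,1}({\bf u})$ for $s>0$.
\begin{itemize}
\item For $i=1$: $s^{\alpha_2}u_2=s^{3}u_2$ and $1+\alpha_1=3$.
\item For $i=2$: $s^{4}u_3$ and $1+\alpha_2=4$.
\item For $i=3$: $s^{5}u_4$ and $1+\alpha_3=5$.
\item For $i=4$: $-(s^{2}u_1)^3=-s^{6}u_1^3$ and $1+\alpha_4=6$.
\end{itemize}
Hence $f_{i;\alpha,1}\in QH_{4,1}(\alpha,1+\alpha_i)$ for all $i$, and $f_{\alpha,1}\in QH_4(\alpha,1)$.

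Next we check the residual condition (\ref{residual}). For $i=1,2,3$ the difference $f_i(s^{\Lambda_\alpha}{\bf u})-s^{1+\alpha_i}f_{i;\alpha,1}({\bf u})$ is identically zero. For $i=4$ it equals $-a s^{4}u_3$, so we need $s^4u_3=o(s^6)$ uniformly on the unit set $\{\sum_i u_i^2=1\}$. There $|u_3|\le 1$, which gives $|a s^4u_3|\le |a| s^4=o(s^6)$ uniformly as $s\to+\infty$. No set $\tilde K$ is involved, because $I_\alpha$ is the full index set.

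There is no real obstacle; the only point needing care is the choice of weights. Consistency of the chain $u_1'=u_2$, $u_2'=u_3$, $u_3'=u_4$ forces $\alpha_{i+1}=\alpha_i+k$. The balance $u_4'\sim u_1^3$ then gives $\alpha_4+k=3\alpha_1$. Together these yield $\alpha_1+4k=3\alpha_1$, i.e. $\alpha_1=2k$, and the choice $k=1$ gives $\alpha=(2,3,4,5)$. With these weights the linear term $au_3$ has weight $\alpha_3=4<6=1+\alpha_4$, so it is genuinely lower order.
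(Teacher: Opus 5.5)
Your proof is correct. It is the direct componentwise check against Definition~\ref{dfn-AQH}: the scaling identities for $f_{\alpha,1}=(u_2,u_3,u_4,-u_1^3)^T$, plus the bound $|a s^4u_3|\le |a|s^4=o(s^6)$ on $\{\sum_i u_i^2=1\}$. The paper states this lemma without proof and implicitly relies on exactly this check, which is consistent with its later formula $\tilde f_4({\bf x})=-a(1-p_\alpha({\bf x})^{120})^2x_3-x_1^3$. Your closing derivation of the weights is not needed for the proof, but it correctly explains why $\alpha=(2,3,4,5)$ is the natural choice.
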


It is reported in \cite{CLM2018, DLP2015} that, under the ansatz
\begin{equation}
\label{ansatz-SB}
u(r) = (1 - r)^{-2}w(\ln (1-r)^{-1}),\quad t = -\ln(1-r),
\end{equation}
the problem {\em with $a=0$} is reduced to a bounded solution to the following ODE\footnote{
%	footnote
In \cite{CLM2018, DLP2015}, the variable $s = -\ln (1-r)$ is used as a transformed time variable.
On the other hand, we have used $s$ as a scaling parameter in our framework, and hence we shall rewrite $t$ as the transformed time variable in (\ref{SB-DLP}).
Indeed, we will not discuss the details of (\ref{SB-DLP}) except showing computer-assisted results there.
%	footnote: end
}:
\begin{equation}
\label{SB-DLP}
w^{(4)} + 14w^{(3)} + 71 w'' + 154 w' + 120 w + w^3 = 0,\quad ' = \frac{d}{dt},
\end{equation}
and one periodic solution $w(t)$ is validated\footnote{
%	footnote
In the following work \cite{CLM2018}, the associated stable/unstable bundles and local stable manifolds are constructed with computer-assisted proofs.
%	footnote: end
} whose profile is numerically reconstructed and is shown in Figure \ref{fig-SB}-(a).
The period of the validated periodic solution is enclosed by
\begin{equation*}
T \in 1.90809723205_{0663}^{1545}.
\end{equation*}
The associated Floquet {\em exponents} reported in \cite{CLM2018, DLP2015} are $\lambda = 0, -7, -8$, and the corresponding characteristic multipliers are
\begin{equation*}
\begin{pmatrix}
1\\
1.58222\times 10^{-6}\\
2.34742221\times 10^{-7}
\end{pmatrix}.
\end{equation*}
The remaining characteristic multiplier is $\mu_0 \approx 6.7402514547258665$, according to our numerical computation\footnote{
%	footnote
In \cite{CLM2018}, this quantity is not shown, which could be because the authors have focused on {\em stable} nature of periodic orbits in the research.
%	footnote:end
}. 
\par
\bigskip
Keeping this preceding result in mind, we shall investigate a periodic orbit of the balance law associated with (\ref{SB-system}) with characteristics to compare with the above quantities.
In the present case, we have set $a = 1$ as an example but, as we shall see, the setting makes no contribution to blow-up description.
\par
Now the balance law associated with (\ref{SB-system}) is 
\begin{equation}
\label{SB-system-balance}
\frac{d}{d\eta}{\bf U} = -\begin{pmatrix}
2 & 0 & 0 & 0\\
0 & 3 & 0 & 0\\
0 & 0 & 4 & 0\\
0 & 0 & 0 & 5\\
\end{pmatrix}{\bf U} + \begin{pmatrix}
U_2\\
U_3\\
U_4\\
-U_1^3
\end{pmatrix}
\end{equation}

The period is
\begin{equation*}
T_\eta \approx 1.90809723205111159.
\end{equation*}
Corresponding multipliers are
\begin{equation*}
\begin{pmatrix}
6.74025145472546861\\
1\\
1.58222159613619978\times 10^{-6}\\
2.34742216662795037\times 10^{-7}
\end{pmatrix},
\end{equation*}
while the corresponding Floquet exponents (other than $0$) are
\begin{align*}
&T_\eta^{-1}\ln (6.74025145472546861) \approx 1,\\
&T_\eta^{-1}\ln (1.58222159613619978\times 10^{-6}) \approx -7.00000000035,\\
&T_\eta^{-1}\ln (2.34742216662795037\times 10^{-7}) \approx -8.00000000988.
\end{align*}
These observations show good agreement with computer-assisted results in \cite{DLP2015} and imply that {\em the asymptotic  quantitative nature of periodic blow-ups is extracted only by the quasi-homogeneous components}, while concrete solution profiles can be affected by lower-order terms, as shown in Figure \ref{fig-SB}.
Moreover, the remaining multiplier $\mu_0$ from (\ref{SB-DLP}) is in good agreement with the theoretical value $\exp(T_\eta)$. 
\par
\bigskip
Next consider the compactification
\begin{align*}
y_1 &= \frac{x_1}{(1 - p_\alpha({\bf x})^{120})^{2}},\quad y_2 = \frac{x_2}{ (1 - p_\alpha({\bf x})^{120})^{3} },\\
y_3 &= \frac{x_3}{(1 - p_\alpha({\bf x})^{120})^{4}},\quad y_4 = \frac{x_4}{ (1 - p_\alpha({\bf x})^{120})^{5} },\\
p_\alpha({\bf x})^{120} &= x_1^{60} + x_2^{40} + x_3^{30} + x_4^{24},
\end{align*}
where we have chosen $\beta = (30, 20, 15, 12)$ and $c = 60$.
The associated desingularized vector field is:
\begin{align}
\label{SB-system-desing}
\dot{\bf x} &\equiv \frac{d{\bf x}}{d\tau} = \frac{1}{120}\{1 + 119 p_\alpha({\bf x})^{120}\}\tilde f({\bf x}) - G({\bf x}) \Lambda_\alpha {\bf x},\\
%%%
\notag
\tilde f({\bf x}) &= (\tilde f_1({\bf x}), \tilde f_2({\bf x}), \tilde f_3({\bf x}), \tilde f_4({\bf x}))^T,\\
\notag
\tilde f_1({\bf x}) &= x_2,\quad \tilde f_2({\bf x}) = x_3,\quad \tilde f_3({\bf x}) = x_4,\quad \tilde f_4({\bf x}) = - a(1-p_\alpha({\bf x})^{120})^{2} x_3 - x_1^3,\\
\notag
G({\bf x}) &\equiv \sum_{j=1}^4 \frac{x_j^{2\beta_j - 1}}{\alpha_j} \tilde f_j({\bf x}) = \frac{x_1^{59}x_2}{2} + \frac{x_2^{39}x_3}{3} + \frac{x_3^{29}x_4}{4} + \frac{x_4^{23}}{5}\left\{ - a(1-p_\alpha({\bf x})^{120})^{2} x_3 - x_1^3 \right\}.
\end{align}
At a glance, direct (rigorous) investigations of (\ref{SB-system-desing}) will require tedious and cumbersome calculations to obtain meaningful information, such as computer-assisted proofs (e.g., \cite{CLM2018, DLP2015, LMT2023}).
Once we see the correspondence we have constructed, we can connect genuine blow-up nature from the balance law, which looks similar to the approaches in \cite{CLM2018, DLP2015} under several restrictions, with (\ref{SB-system-desing}).
\par
Nevertheless, numerical computations are feasible in a reasonable manner, and hence we shall numerically investigate the nature of periodic solutions to (\ref{SB-system-desing}) on the horizon.
Indeed, {\em the standard $4$-th order Runge-Kutta scheme} provides a periodic orbit of (\ref{SB-system-desing}) in $\mathcal{E}$ with a typical choice of initial points\footnote{
%	footnote
In principle, \lq\lq unstable" orbits would not be captured in a naive way.
This observation, on the other hand, implies that {\em the orbit is stable indeed, unlike observations in \cite{CLM2018, DLP2015}}.
%	footnote: end
}.
The period is
\begin{equation*}
T_\tau \approx 9.1220661214704784,
\end{equation*}
while corresponding multipliers are
\begin{equation*}
\begin{pmatrix}
1\\
0.14836241756777283\\
1.5822215141006154\times 10^{-6}\\
2.3474229458044122\times 10^{-7}
\end{pmatrix} .
\end{equation*}
We see that the above quantities are consistent with our algebraic-dynamical correspondence (Theorem \ref{thm-correspondence-C1}), while
\begin{equation*}
- \ln(0.14836241756777283) \approx 1.90809723121 \approx T_\eta = \int_0^{T_\tau} G({\bf x}_\ast(\tau))d\tau.
\end{equation*}

\begin{observation}
Consider (\ref{SB-system}) with $a = 1$.
Both systems (\ref{SB-system-balance}) and (\ref{SB-system-desing}) admit periodic solutions, in which the orbit for (\ref{SB-system-desing}) is located on the horizon $\{p_\alpha \equiv 1\}$.
In particular, correspondence of characteristic multipliers mentioned in Theorem \ref{thm-correspondence-C1} is confirmed with $k=1$.
Moreover, period and characteristic multipliers of the periodic orbit for (\ref{SB-system-balance}) are (numerically) identical with (\ref{SB-DLP}) under the ansatz (\ref{ansatz-SB}). 
In particular, at least in the $C^0$- and $C^1$-sense, the \lq\lq lower-order term" $-au''$ in (\ref{SB}) makes no contribution to describe periodic blow-ups.
\end{observation}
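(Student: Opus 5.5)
The observation combines an exact algebraic part with a numerical part, and I would separate them. The exact part is that (\ref{SB-system-balance}) does not depend on $a$, is equivalent to (\ref{SB-DLP}), and fits the hypotheses of Theorem \ref{thm-correspondence-C1}. The numerical part is that periodic orbits exist and the computed periods and multipliers agree, which can only be checked by computation.

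\emph{Independence of $a$.} With $\alpha=(2,3,4,5)$ and $k=1$, the fourth component must have weight $k+\alpha_4=6$. The term $u_1^3$ has weight $6$, while $-au_3$ has weight $4<6$. So $-au_3$ belongs to $f_{\rm res}$, and $f_{\alpha,k}$ together with the balance law (\ref{SB-system-balance}) are the same for every $a$, including $a=0$.

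\emph{Equivalence with (\ref{SB-DLP}).} Set $U_1=w$ and write $D=d/d\eta$. The first three equations of (\ref{SB-system-balance}) read $U_{j+1}=(D+\alpha_j)U_j$, so they eliminate $U_2,U_3,U_4$. The last equation becomes
\begin{equation*}
(D+5)(D+4)(D+3)(D+2)w=-w^3 .
\end{equation*}
Expanding the product gives $D^4+14D^3+71D^2+154D+120$, which is exactly (\ref{SB-DLP}) with $\eta$ playing the role of $t$. This is consistent with the ansatz (\ref{ansatz-SB}), since $\eta=-\ln(1-r)$ corresponds to $-\ln\theta$.

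Because $(w,w',w'',w''')\mapsto {\bf U}$ is a constant invertible (triangular) linear map, periodic orbits of (\ref{SB-DLP}) and (\ref{SB-system-balance}) correspond with equal periods. Their variational equations are conjugate by the same constant matrix, so their monodromy matrices are similar. Hence the period and the multipliers computed from (\ref{SB-system-balance}) must reproduce those of \cite{CLM2018, DLP2015}, and the remaining multiplier must equal $\exp(T_\eta)$ by Corollary \ref{cor-ch-balance}. This explains the numerical agreement and justifies the claim that the lower-order term $-au''$ plays no role in the $C^0$ and $C^1$ sense.

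\emph{Hypotheses on the horizon side.} To invoke Theorem \ref{thm-correspondence-C1} I must check Assumption \ref{ass-diff-QH}. In $\tilde f_4$ the only non-quasi-homogeneous part is $-a(1-p_\alpha({\bf x})^{120})^2x_3$. Its derivative carries a factor $(1-p_\alpha^{120})$, which vanishes on $\mathcal{E}$, so $D_{\bf x}\tilde f=D_{\bf x}\tilde f_{\alpha,k}$ there. By Theorem \ref{thm-correspondence-C0} and Corollary \ref{cor-from-balance-to-desing}, the periodic orbit of the balance law then yields a periodic orbit of (\ref{SB-system-desing}) on $\{p_\alpha\equiv1\}$ with
\begin{equation*}
\int_0^{T_\tau}G({\bf x}_\ast)\,d\tau=T_\eta/k=T_\eta .
\end{equation*}
Table \ref{table-eigen1} therefore predicts the horizon multipliers $1$, $e^{-T_\eta}$ and the two quotient multipliers unchanged.

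\emph{Numerical confirmation.} It remains to compute both orbits by shooting and integrate the monodromy matrices, then compare. The checks are $-\ln(0.14836\ldots)\approx T_\eta$ and agreement of the multipliers $1.58\times10^{-6}$ and $2.35\times10^{-7}$ between the two systems.

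The main obstacle is that the existence of the periodic orbits themselves is only numerical here. A rigorous version would import the computer-assisted proof of \cite{DLP2015} for (\ref{SB-DLP}) and transfer it through the exact equivalences above. The stiff, high-degree vector field (\ref{SB-system-desing}) is then needed only as a consistency check, not as a source of proof.
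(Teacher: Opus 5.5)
Your proposal is correct, and it takes a genuinely different and stronger route than the paper.

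\textbf{What the paper does.} It supports this observation only numerically. It computes the orbit of (\ref{SB-system-balance}) by shooting and compares its period and multipliers with the computer-assisted data of \cite{CLM2018, DLP2015} for (\ref{SB-DLP}). It separately integrates (\ref{SB-system-desing}) to locate an orbit on $\mathcal{E}$ and checks $-\ln(0.1483\ldots)\approx T_\eta$. It then infers the irrelevance of $-au''$ from the agreement of the numbers.

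\textbf{What your route adds.} Your elimination $U_{j+1}=(D+\alpha_j)U_j$ shows that (\ref{SB-DLP}) \emph{is} the balance law written in $U_1=w$. The coefficients $14,71,154,120$ are the elementary symmetric functions of $2,3,4,5$, and (\ref{ansatz-SB}) is exactly the scaling $U_1=\theta^{\alpha_1/k}u$ with $\eta=-\ln\theta$. Three things follow:
\begin{itemize}
\item ``numerically identical'' becomes an exact identity up to a constant linear conjugacy;
\item $\mu_0=e^{T_\eta}$ is explained by Corollary \ref{cor-ch-balance};
\item the irrelevance of $-au''$ becomes structural. The term $-au_3$ lies in $f_{\rm res}$, and its contribution to $D_{\bf x}\tilde f$ vanishes on $\mathcal{E}$. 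This is Assumption \ref{ass-diff-QH}, which the paper never verifies for this example.
\end{itemize}
Your route also gives a path to a rigorous existence statement, by transporting the validated orbit of \cite{DLP2015} through Theorem \ref{thm-correspondence-C0}.

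\textbf{What the paper's route buys.} It buys independence. The horizon orbit and its multipliers are obtained directly from (\ref{SB-system-desing}), so they test Theorem \ref{thm-correspondence-C1} rather than presuppose it. In your write-up, therefore, it is the final numerical step, not the prediction from Table \ref{table-eigen1}, that carries the word ``confirmed''.

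\textbf{Two small points to tidy.}
\begin{itemize}
\item Corollary \ref{cor-ch-balance} requires $E(\eta)$ to have full rank, which you do not check. It is automatic here. The rank of $E$ is constant in $\eta$. If it were one everywhere, then since $\Lambda_\alpha{\bf Y}_0\neq 0$, the nonconstant orbit would lie on a single curve $\{s^{\Lambda_\alpha}{\bf z}\}_{s>0}$, where an autonomous flow has no nonconstant periodic orbit.
\item Your equivalence shows that the validated orbit of \cite{DLP2015} gives \emph{a} periodic solution of (\ref{SB-system-balance}) with the same invariants. Periodic orbits need not be unique, so identifying it with the orbit actually computed in the paper still rests on the numerical agreement.
\end{itemize}
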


\begin{figure}[htbp]\em
\centering
\includegraphics[width=15cm]{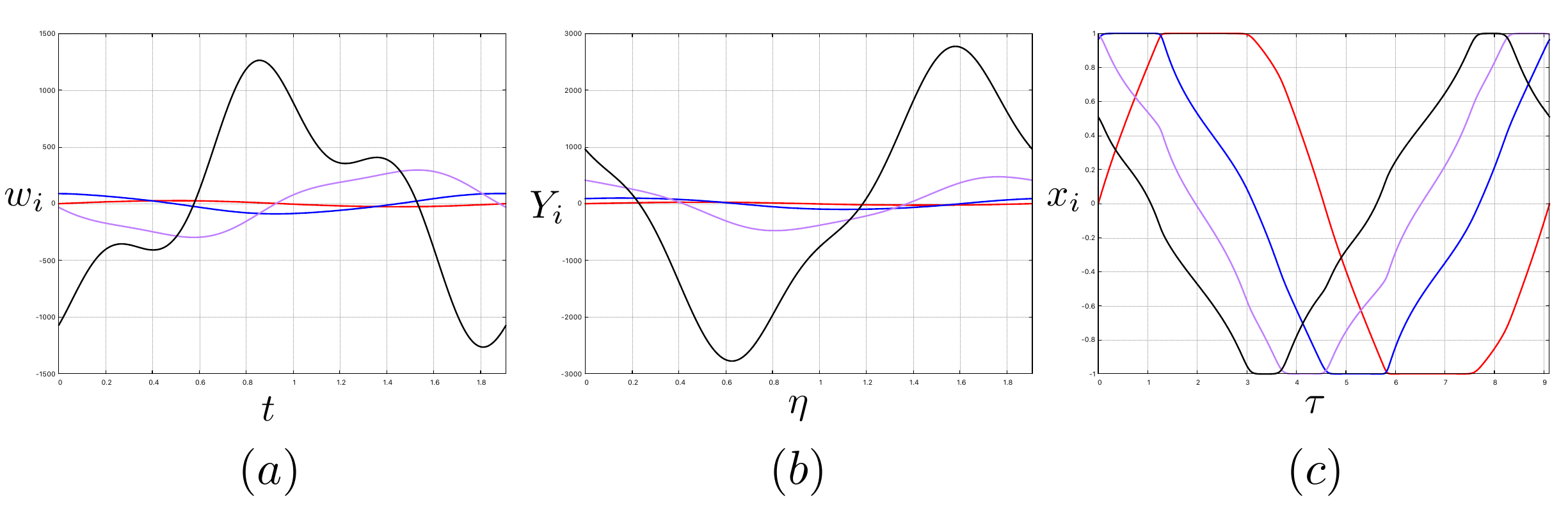}
\caption{Periodic solutions of (\ref{SB-DLP}), (\ref{SB-system-balance}) and (\ref{SB-system-desing}).}
\label{fig-SB}
(a): to (\ref{SB-DLP}), (b): to (\ref{SB-system-balance}), (c): to (\ref{SB-system-desing}).
One-period profiles are drawn.
In each figure, Red: $i=1$, Blue: $i=2$, Purple: $i=3$, Black: $i=4$.
\end{figure}

\section*{Concluding Remarks}
The present paper addresses the correspondence of (type-I) blow-up nature between dynamics at infinity and balance laws.
This investigation originated from {\em stationary} objects in \cite{asym2, M2026_asym_nonaut}, and the present study extends this understanding to {\em oscillatory} blow-up behavior, in particular to {\em periodic} structures associated with blow-up.
Combining approaches in the present arguments with descriptions of blow-ups shadowing more general \lq\lq invariant sets at infinity", such as NHIMs on the horizon \cite{Mat2025_NHIM}, further generalization of the correspondence will be constructed.
\par
Of particular importance in constructing the correspondence is the determination of the function $r(\tau)$ or $r(\tau(\eta))$.
Our observation here is that $r$ is determined by a (non-trivial) positive solution to a linear, scalar differential equation, and the condition to guarantee its existence is characterized by the functional $G({\bf x}(\tau))$.
\par
\bigskip
On the other hand, the (average) positivity of the functional $G({\bf x}(\tau))$ characterizes the hyperbolic behavior in the direction transverse to the horizon.
Indeed, as shown in \cite{asym2} for stationary case and Theorem \ref{thm-correspondence-C1} for periodic case, the distinguished transverse eigenvalue in the stationary case is $-C_\ast \equiv -G({\bf x_\ast})$, while the corresponding characteristic multiplier in the periodic case is $\exp(-\int_0^{T_\tau} C_\ast(\tau) d\tau)$ with $C_\ast(\tau) = G({\bf x}_\ast(\tau))$.
Therefore, {\em positivity of $C_\ast$ in the stationary case and positivity of its period average in the periodic case guarantee transverse attraction toward the horizon} with an appropriate choice of initial points, inducing type-I finite-time blow-ups.
We shall then expect that the quantity $C_\ast$ will provide information of type-I blow-ups in a wide class of evolutionary systems, such as infinite dimensional systems (partial / delay differential equations, for example) or finite-dimensional systems  {\em without admitting smooth invariant manifold structure of \lq\lq invariant sets at infinity"}.
This direction shall be investigated in the near future.

\section*{Acknowledgements}
KM was partially supported by World Premier International Research Center Initiative (WPI), Ministry of Education, Culture, Sports, Science and Technology (MEXT), Japan, the Research Institute for Mathematical Sciences,
an International Joint Usage/Research Center located in Kyoto University, JSPS Grant-in-Aid for Scientific Research (B) (No. JP26K00619) and the financial support from the JSPS Bilateral Program (PHC Sakura).

\bibliographystyle{plain}
\bibliography{blow_up_asymptotic_per}

%\section{Proofs of statements}

\end{document}